\documentclass[11pt]{amsart}
\usepackage[english]{babel}
\usepackage[T1]{fontenc}
\usepackage[ansinew]{inputenc}
\usepackage{amsmath}
\usepackage{amsfonts}
\usepackage{ mathrsfs }
\usepackage{amssymb}
\usepackage{textcomp}
\usepackage{enumitem}
\usepackage[hidelinks]{hyperref}
\usepackage[arrow, matrix, curve]{xy}
\usepackage{comment}
\usepackage{color}


\addtolength{\oddsidemargin}{-1cm}
\addtolength{\evensidemargin}{-1cm}
\addtolength{\textwidth}{2cm}
\addtolength{\topmargin}{-2cm}
\addtolength{\textheight}{2cm}

\numberwithin{paragraph}{section}
\numberwithin{equation}{section}

\newtheorem{satz}{Theorem}[section]

\newtheorem{lem}[satz]{Lemma}

\newtheorem{prop}[satz]{Proposition}

\newtheorem{kor}[satz]{Corollary}
\theoremstyle{definition}
\newtheorem{defn}[satz]{Definition}
\newtheorem{bem}[satz]{Remark}
\newtheorem{Ex}[satz]{Example}

\newtheorem{question}[satz]{Question}


\newcommand{\Z}{\mathbb{Z}}

\newcommand{\Q}{\mathbb{Q}}
\newcommand{\R}{\mathbb{R}}

\newcommand{\C}{\mathbb{C}}
\newcommand{\T}{\mathbb{T}}

\newcommand{\G}{\mathbb{G}}

\newcommand{\Xcal}{\mathcal{X}}

\newcommand{\Ocal}{\mathcal{O}}

\newcommand{\Linear}{\mathbb{L}}

\newcommand{\del}{\partial}
\newcommand{\Xan}{X^{\an}}
\newcommand{\inj}{\hookrightarrow}

	\DeclareMathOperator{\an}{an}
	\DeclareMathOperator{\PD}{PD}

	\DeclareMathOperator{\trop}{trop}

	\DeclareMathOperator{\Hom}{Hom}
	
	\DeclareMathOperator{\Spec}{Spec}

	\DeclareMathOperator{\Trop}{Trop}

	\DeclareMathOperator{\im}{im}
	\DeclareMathOperator{\lin}{lin}

	\DeclareMathOperator{\supp}{supp}

	\DeclareMathOperator{\Pic}{Pic}

	\DeclareMathOperator{\Monoids}{Monoids}
	
	\DeclareMathOperator{\CH}{CH}
	\DeclareMathOperator{\tor}{\mathcal{T}}
	\DeclareMathOperator{\map}{map}

	\DeclareMathOperator{\can}{can}
	
	\DeclareMathOperator{\HH}{H}
	\DeclareMathOperator{\dR}{dR}
	\DeclareMathOperator{\CLD}{CLD}
	
	\DeclareMathOperator{\Cund}{\underline{C}}
	\DeclareMathOperator{\Fbb}{\mathbb{F}}
	\DeclareMathOperator{\Smooth}{Smooth}
	\DeclareMathOperator{\Hodge}{Hodge}
	\DeclareMathOperator{\cyc}{cyc}
	\DeclareMathOperator{\glob}{global}

\DeclareMathOperator{\AS}{\mathcal{A}}

\DeclareMathOperator{\CS}{\mathcal{C}}

\DeclareMathOperator{\FS}{\mathcal{F}}

\DeclareMathOperator{\HS}{\mathcal{H}}

\DeclareMathOperator{\Scal}{\mathcal{S}} 
\DeclareMathOperator{\TS}{\mathcal{T}}

\DeclareMathOperator{\A}{\mathbb{A}}




\def\quotient#1#2{\raise0.75ex\hbox{$\,#1$}\big/\lower0.75ex\hbox{$#2\,$}}

\title{Tropical cohomology with integral coefficients for analytic spaces}

\author[P.~Jell]{Philipp Jell}
\address{P. Jell, Fakult\"at Mathematik, Universit{\"a}t 
Regensburg, 93040 Regensburg, Germany}
\email{philipp.jell@mathematik.uni-regensburg.de}

\thanks{The author was supported by the DFG Collaborative Research Center 1085 ``Higher Invariants''.}

\setcounter{tocdepth}{1}

\begin{document}
\begin{abstract}
We study tropical Dolbeault cohomology for Berkovich analytic spaces, as defined by Chambert-Loir and Ducros. 
We provide a construction that lets us pull back classes in tropical cohomology 
to classes in tropical Dolbeault cohomology as well as check whether those classes are non-trivial. 
We further define tropical cohomology with integral coefficients on the Berkovich space and 
provide some computations. 
Our main tool is extended tropicalization of toric varieties as introduced by Kajiwara and Payne. 

\bigskip

\noindent
MSC: Primary 32P05; Secondary  14T05, 14G22, 14G40

\bigskip

\noindent
Keywords: Tropical Dolbeault cohomology, Tropical Cohomology, Superforms, Toric varieties, 
Berkovich spaces, Tropical geometry
\end{abstract}

\maketitle 

\section{Introduction}

Real valued-differential forms and currents on Berkovich analytic spaces 
were introduced by Chambert--Loir and Ducros 
in their fundamental preprint \cite{CLD}. 
They provide a notion of bigraded differential forms and currents 
on these spaces that has striking similarities with the complex 
of smooth differential forms on complex analytic spaces. 
The definition works by formally pulling back Lagerberg's superforms on $\R^n$ 
along tropicalization maps. 
These tropicalization maps are induced by mapping open subsets of the analytic space
to analytic tori and then composing with the tropicalization maps of the tori. 

Payne, and independently Kajiwara \cite{Payne, Kajiwara}, generalized this tropicalization
procedure from tori to general toric varieties and 
Payne showed that the Berkovich analytic space is the inverse limit 
over all these tropicalizations. 

Shortly after the preprint by Chambert--Loir and Ducros, Gubler 
showed that one may, instead of considering arbitrary analytic maps to tori, 
restrict one's attention to algebraic closed embeddings 
if the analytic space is the Berkovich analytification of an algebraic variety \cite{Gubler}. 

Let $K$ be a field that is complete with respect to a non-archimedean absolute value
and let $X$ be a variety over $K$. 
We write $\Gamma = \log \vert K^* \vert$ for the value group of $K$
and $\Xan$ for the Berkovich analytification of $X$. 
Both the approach by Gubler and the one by Chambert--Loir and Ducros 
provide bigraded complexes of sheaves of differential forms $(\AS^{\bullet, \bullet}, d', d'')$ on $\Xan$. 
We denote by $\HH^{*,*}$ (resp.~$\HH^{*,*}_c$) the cohomology of the complex of global sections 
(resp.~global sections with compact support) with respect to $d''$. 
 
In this paper, we generalize Gubler's approach, showing that one can define 
forms on Berkovich analytic spaces by using certain classes of embeddings of open subsets  
into toric varieties.
Given a fine enough family of tropicalizations $\Scal$ (see Section \ref{families of trop} for the definition
of this notion) 
we obtain a bigraded complex of sheaves $(\AS^{\bullet, \bullet}_{\Scal}, d', d'')$ on $\Xan$. 
We show that for many useful $\Scal$, our complex $\AS^{\bullet, \bullet}_{\Scal}$ is
isomorphic to $\AS^{\bullet, \bullet}$.

For the rest of the introduction, we make the very mild assumption
the $X$ is normal and admits at least one closed embedding into a toric variety. 

The general philosophy of this paper and also the definition
of forms by Chambert-Loir and Ducros and Gubler
is that we can transport constructions done for tropical varieties to Berkovich spaces
by locally pulling back along tropicalizations. 
While Chambert-Loir, Ducros and Gubler used only tropicalization maps of tori, 
we will also allow tropicalization maps of general toric varieties. 
We will show advantages of this equivalent approach throughout the paper. 

The definitions by both Chambert-Loir and Ducros as well as Gubler work with local embeddings.
We show that we can also work with global embeddings. 
Our constructions provides us with the following: 
Let $\varphi \colon X \to Y_{\Sigma}$ be a closed embedding 
into a toric variety. 
Then we obtain pullback morphisms
\begin{align}
\label{eq tropstar1} \trop^* &\colon \HH^{p,q}(\Trop(\varphi(X))) \to \HH^{p,q}(\Xan) \text{ and } \\
\label{eq tropstar2} \trop^*&\colon \HH^{p,q}_c(\Trop(\varphi(X))) \to \HH^{p,q}_c(\Xan)
\end{align}
in cohomology. 

Note that (\ref{eq tropstar1}) and (\ref{eq tropstar2}) were not in general available 
in the approaches by Chambert--Loir and Ducros resp.~Gubler. 
This construction allows us to explicitly construct classes in tropical Dolbeault cohomology. 

For cohomology with compact support, we even obtain all classes this way:
\begin{satz} [Theorem \ref{thm cohomology limit}] \label{thm intro 1}
We have 
\begin{align*}
\HH^{p,q}_c(\Xan) = \varinjlim_{\varphi \colon X \to Y_\Sigma} \HH_c^{p,q}(\Trop(\varphi(X)).
\end{align*}
where the limit runs over all closed embeddings of $X$ into toric varieties. 
\end{satz}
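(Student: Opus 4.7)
The plan is to verify that the pullback maps of equation (\ref{eq tropstar2}) assemble into an isomorphism out of the filtered colimit. First I would check that the indexing category of closed embeddings $\varphi\colon X\to Y_\Sigma$ is filtered: given two embeddings $\varphi_1,\varphi_2$ into toric varieties $Y_{\Sigma_1},Y_{\Sigma_2}$, the diagonal embedding into a suitable equivariant compactification of $Y_{\Sigma_1}\times Y_{\Sigma_2}$ dominates both, and equivariant toric morphisms provide the transition maps. Functoriality of $\trop^*$ under such refinements then yields a well-defined map from the colimit to $\HH^{p,q}_c(\Xan)$.

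For surjectivity, I would take a $d''$-closed compactly supported form $\omega$ on $\Xan$. By the local definition of $\AS^{\bullet,\bullet}$, and the earlier part of the paper showing that one may compute the same cohomology using the family $\Scal$ of closed embeddings of $X$ into toric varieties, $\omega$ is locally a pullback from such tropicalizations. Using a partition of unity subordinate to a finite open cover of the compact set $\supp(\omega)$ by chart domains $U_i$ with embeddings $\varphi_i$, one consolidates the cover into a single closed embedding $\varphi$ through which every $\trop_{\varphi_i}$ factors (take a product of the $\varphi_i$, then refine if needed). The form $\omega$ descends to a compactly supported superform $\tilde\omega$ on $\Trop(\varphi(X))$; here properness of the maps between tropicalizations, which relies on the use of \emph{extended} tropicalizations of compact toric varieties rather than just tori, is what preserves compact support.

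For injectivity, I would argue analogously. If $\tilde\omega$ on $\Trop(\varphi(X))$ satisfies $\trop_\varphi^*\tilde\omega = d''\eta$ on $\Xan$ with $\eta$ compactly supported, then $\eta$ itself is locally a pullback, and the same consolidation procedure provides a refinement $\varphi'$ dominating $\varphi$ together with a compactly supported $\tilde\eta$ on $\Trop(\varphi'(X))$ with $\tilde\omega = d''\tilde\eta$ after further refinement to absorb any Čech-type compatibility data between the local charts of $\eta$.

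The main obstacle will be executing the consolidation step cleanly: one must show that finitely many local tropicalization charts for a form can be replaced by a single global one while preserving the $d''$-cochain identity and the compact-support condition. This is exactly where extended tropicalization into compact toric ambient spaces is essential, since it ensures that the transition maps between tropicalizations are proper and thus translate compact support on $\Xan$ to compact support on $\Trop(\varphi(X))$, and vice versa. Provided the fine-family framework of Section \ref{families of trop} supplies the required comparison between $\AS^{\bullet,\bullet}_\Scal$ and $\AS^{\bullet,\bullet}$, the colimit identification follows from these two directions.
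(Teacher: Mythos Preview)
Your outline is essentially correct, but it takes a longer route than the paper and contains two small inaccuracies worth flagging.

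The paper does not argue at the cohomology level at all. It first proves the statement at the level of \emph{complexes} (Theorem~\ref{thm forms are limit}): for a global admissible family $\Scal$, the map
\[
\varinjlim_{\varphi}\AS^{p,q}_c(\trop_\varphi(V))\longrightarrow \AS^{p,q}_{\Scal,c}(V)
\]
is an isomorphism. The cohomology statement is then a one-line consequence of the fact that cohomology commutes with filtered colimits. Your approach, by working directly with closed forms and primitives, forces you to run the consolidation argument twice (once for $\omega$, once for $\eta$), and in the injectivity step you still implicitly need Lemma~\ref{lem check one chart} to conclude that the pulled-back $\tilde\omega$ equals $d''\tilde\eta$ on the refined tropicalization. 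This works, but the chain-level argument packages it more cleanly.

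Two corrections on the consolidation step itself. First, no partition of unity is used or needed. In Theorem~\ref{thm one chart} one simply passes to a common refinement $\varphi$ of the finitely many $\varphi_i$, pulls the local data $\alpha_i$ back to $\trop_\varphi(V_i)$, and observes via Lemma~\ref{lem check one chart} that these pullbacks agree on overlaps in the tropicalization, hence glue directly; compactness of the support then follows from $\supp(\alpha')=\trop_\varphi(\supp(\alpha))$. Second, the toric varieties need not be compact. The map $\trop_\varphi\colon \Xan\to\Trop_\varphi(X)$ is proper for \emph{any} closed embedding of $X$ into a toric variety, and this is what transports compact support in both directions. The role of extended tropicalization is to make $\Trop_\varphi(X)$ the genuine image of all of $\Xan$ (including points over boundary strata), not to enforce properness of transition maps.
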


We also show that the analogous result for $\HH^{p,q}$ is not true (Remark \ref{bem non-compact non-true}). 

Further, in certain cases, we can check whether one of these classes is non-trivial on
the tropical side. 

\begin{satz}  [Theorem \ref{thm trop injective}]
Assume that $\Trop(\varphi(X))$ is smooth. 
Then (\ref{eq tropstar1}) and (\ref{eq tropstar2}) are both injective. 
\end{satz}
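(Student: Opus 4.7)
The plan is to detect non-vanishing of tropical cohomology classes after pullback to $\Xan$ by means of a Poincar\'e-type pairing on the smooth tropical side. Since $\Trop(\varphi(X))$ is assumed smooth, Poincar\'e duality for smooth tropical varieties (in the style of Jell--Shaw--Smacka) provides a perfect pairing
\[
\HH^{p,q}(\Trop(\varphi(X))) \times \HH^{n-p,n-q}_c(\Trop(\varphi(X))) \to \R, \qquad (\alpha,\beta) \mapsto \int_{\Trop(\varphi(X))} \alpha \wedge \beta,
\]
with $n = \dim X$. Injectivity of (\ref{eq tropstar1}) will follow if I can show that any class $\alpha$ with $\trop^*\alpha = 0 \in \HH^{p,q}(\Xan)$ pairs trivially with every $\beta \in \HH^{n-p,n-q}_c(\Trop(\varphi(X)))$, and symmetrically for (\ref{eq tropstar2}).

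The second ingredient is the compatibility of integration with $\trop^*$: for a top-degree form $\omega$ with appropriately controlled support, one has $\int_{\Xan} \trop^*\omega = \int_{\Trop(\varphi(X))} \omega$, where the tropical side uses the tropical multiplicities on $\Trop(\varphi(X))$. This is essentially built into the integration theory of Chambert-Loir and Ducros; in the global-embedding setting developed earlier in the paper it should be read off directly from the definitions. I will also use that the extended tropicalization $\trop\colon \Xan \to \Trop(\varphi(X))$ is proper, so that pullback along $\trop$ preserves compact support, which is exactly what makes the map (\ref{eq tropstar2}) well-defined in the first place.

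With these two inputs in hand, the argument is short. Given $\alpha \in \HH^{p,q}(\Trop(\varphi(X)))$ with $\trop^*\alpha = d''\gamma$ on $\Xan$ and $\beta \in \HH^{n-p,n-q}_c(\Trop(\varphi(X)))$, the form $\gamma \wedge \trop^*\beta$ has compact support, so Stokes yields
\[
\int_{\Trop(\varphi(X))} \alpha \wedge \beta \;=\; \int_{\Xan} \trop^*\alpha \wedge \trop^*\beta \;=\; \int_{\Xan} d''(\gamma \wedge \trop^*\beta) \;=\; 0.
\]
Poincar\'e duality then forces $\alpha = 0$, proving injectivity of (\ref{eq tropstar1}). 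The injectivity of (\ref{eq tropstar2}) is the symmetric statement, obtained by exchanging the support conditions on $\alpha$ and $\beta$; here one needs $\trop^*\alpha$ to be compactly supported (so that the integral converges) and $\trop^*\beta$ merely closed, which is exactly what is available.

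The main obstacle will be the integration-compatibility step. Chambert-Loir and Ducros construct the measure on $\Xan$ using local tropicalizations into tori, so I have to verify that computing it via a single global closed embedding $\varphi$ into a toric variety yields the same integral, matched correctly with the tropical weights on the smooth (and hence trivially weighted) $\Trop(\varphi(X))$. This is essentially a bookkeeping exercise within the formalism for closed embeddings into toric varieties developed in the earlier sections of the paper, but it is where one must be careful; the rest of the argument is a standard Stokes-plus-Poincar\'e-duality computation.
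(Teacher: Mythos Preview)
Your proposal is correct and follows essentially the same approach as the paper: Poincar\'e duality for smooth tropical varieties (the paper cites \cite[Theorem 4.33]{JSS}) together with the compatibility $\int_{\Xan}\trop^*\omega = \int_{\Trop_\varphi(X)}\omega$ (established in Section~\ref{sect integration}), and the fact that integration and wedge product descend to cohomology. The only cosmetic difference is that the paper argues directly (a non-trivial $[\alpha]$ has a witness $[\beta]$ with non-zero integral, hence $[\trop^*\alpha]\neq 0$), whereas you phrase the contrapositive via an explicit Stokes computation; your ``main obstacle'' is exactly what Theorem~\ref{thm independence of integral} already provides.
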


We exhibit three examples in Section \ref{sect non-trivial classes}, 
namely Mumford curve, curves of good reduction and toric varieties. 

Another construction that we transport over from the tropical to the analytic world is 
cohomology with coefficients other than the real numbers. 
For a subring $R$ of $\R$, we define a cohomology theory
\begin{align*}
\HH^{*,*}_{\trop}(\Xan, R) \text{ and } 
\HH^{*,*}_{\trop,c}(\Xan, R)
\end{align*}
with values in $R$-modules. 
Liu introduced in \cite{Liu} a canonical rational subspace of $\HH^{p,q}(\Xan)$. 
We show that this space agrees with $\HH^{p,q}(\Xan, \Q)$ as defined in this paper 
(Proposition \ref{prop rational classes}). 

We obtain the analogue of Theorem \ref{thm intro 1} where on the 
right hand side we have tropical cohomology with coefficients in $R$ (Proposition \ref{prop comparison coefficients}),
and we provide an explicit isomorphism
\begin{align*}
\dR \colon \HH^{p,q}(\Xan) \to \HH^{p,q}_{\trop}(\Xan, \R).
\end{align*}
which is a version of de Rham's theorem in this context (Theorem \ref{Tropical analytic de Rham theorem}).
Liu introduced in \cite{Liu2} a \emph{monodromy operator}
\begin{align*}
M \colon \HH^{p,q}_{\trop, c}(\Xan) \to \HH^{p-1,q+1}_{\trop, c}(\Xan)
\end{align*}
that respects rational classes if $\log \vert K^* \vert \subset \Q$  \cite[Theorem 5.5 (1)]{Liu2}. 
Mikhalkin and Zharkov introduce in \cite{MikZhar} a wave operator
\begin{align*}
W \colon \HH^{p,q}_c(\Trop_{\varphi}(X), \R) \to \HH^{p-1,q+1}_c(\Trop_{\varphi}(\Xan), \R). 
\end{align*}
Note that both these operators are also available without compact support. 

We show that $W$ can be used to give an operator on $\HH^{*, *}_c(\Xan)$ 
and that this operator agrees with $M$ up to sign in Corollary \ref{kor wave mondromy analytic}. 

We also obtain the following result regarding the interaction
between the wave operator and the coefficients of the cohomology groups:

\begin{satz} 
Let $R$ be a subring of $\R$ and $R[\Gamma]$ the smallest subring of $\R$ 
that contains both $R$ and $\Gamma$. 
Then the wave operator $W$ restricts to a map
\begin{align*}
W \colon \HH^{p,q}_c(\Xan, R) \to \HH^{p-1,q+1}_c(\Xan, R[\Gamma]).
\end{align*}
\end{satz}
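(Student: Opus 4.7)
The plan is to reduce the statement to a cellular computation on a single tropicalization, using the direct limit description of tropical cohomology with $R$-coefficients already established in the paper, and then to read off the coefficient bound from the explicit Mikhalkin--Zharkov formula for $W$.

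First, I would invoke the analogue of Theorem \ref{thm intro 1} for coefficients in $R$ (Proposition \ref{prop comparison coefficients}) to identify
\[
\HH^{p,q}_c(\Xan, R) = \varinjlim_{\varphi \colon X \to Y_\Sigma} \HH^{p,q}_c(\Trop(\varphi(X)), R),
\]
and to do the same with $R$ replaced by $R[\Gamma]$. The wave operator on $\Xan$ is constructed from the Mikhalkin--Zharkov wave operators $W_\varphi$ on the individual tropicalizations, and is by construction compatible with the refinement maps in the directed system. Hence the theorem follows once I establish that, for each embedding $\varphi$, the operator restricts to
\[
W_\varphi \colon \HH^{p,q}_c(\Trop(\varphi(X)), R) \longrightarrow \HH^{p-1,q+1}_c(\Trop(\varphi(X)), R[\Gamma]).
\]

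Second, I would unfold the definition of $W_\varphi$ at the level of cellular $(p,q)$-cochains on the polyhedral complex underlying $\Trop(\varphi(X))$. In the Mikhalkin--Zharkov formalism, the wave of a cochain $\omega$ is computed, cell by cell, by contracting $\omega$ with translation vectors between the vertices of the cell and a chosen reference vertex. Since $\Trop(\varphi(X))$ is a $\Gamma$-rational polyhedral complex (its vertices lie in $\Gamma^n$ in the coordinates coming from the toric embedding), these translation vectors all have coordinates in $\Gamma$. Thus applying $W_\varphi$ to a cochain with values in $R$ produces a cochain whose values lie in the subring generated by $R$ and $\Gamma$, i.e.~in $R[\Gamma]$.

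Third, I would check compatibility with refinements. A refinement $Y_{\Sigma} \to Y_{\Sigma'}$ subdividing the polyhedral structure does not introduce new vertices outside $\Gamma^n$, so the above coefficient bound is preserved by the transition maps and passes to the colimit, giving the desired operator on $\HH^{*,*}_c(\Xan, R)$.

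The main obstacle I expect is pinning down the explicit form of $W_\varphi$ at the cochain level sufficiently precisely to make the coefficient tracking rigorous; in particular, one has to be careful that the reference choices involved in the formula do not introduce spurious denominators or translations outside $\Gamma$. Once the formula is written purely in terms of contractions with $\Gamma$-valued edge vectors of the polyhedral complex, the conclusion is immediate and the coefficient statement $R \rightsquigarrow R[\Gamma]$ emerges naturally from the polyhedral geometry.
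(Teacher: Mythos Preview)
Your proposal is correct and follows essentially the same approach as the paper: reduce to individual tropicalizations via Proposition~\ref{prop comparison coefficients}, then use the Mikhalkin--Zharkov cellular formula on a triangulation with $\Gamma$-rational vertices to see that $R$-valued cochains are sent to $R[\Gamma]$-valued ones (this is exactly Proposition~\ref{prop wave restricts}). The only minor quibble is that your third step conflates the paper's notion of refinement (a map between different toric embeddings) with polyhedral subdivision, but this step is in any case absorbed by the direct-limit formalism and does not affect correctness.
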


As $W$ and $M$ agree up to sign and Liu's subspace of rational classes agrees with $\HH^{*,*}(\Xan, \Q)$, 
this generalizes Liu's result for $\Gamma \subset R = \Q$.

We now sketch the organization of the paper. 
In Section \ref{sec toric varieties} we recall background on toric varieties and their tropicalizations. 
Section \ref{sect tropical cohomology} contains all constructions on tropical varieties 
that are needed for the paper. 
Most of these should be known to experts, 
however we still chose to list them for completeness. 
The main new result is the identification of the wave and monodromy operator, 
which is based on Lemma \ref{thm monodromy wave trop}. 
In Section \ref{families of trop} we consider what we call \emph{families of tropicalizations},
which is what we will use to define forms on Berkovich spaces. 
We give the definitions and some examples of families that we will consider. 
In Section \ref{sect superforms} we define for a fine enough family of tropicalizations $\Scal$ 
a bigraded complex $\AS^{\bullet, \bullet}_{\Scal}$ of sheaves of differential forms on Berkovich spaces.
We also provide some conditions under which those complexes are isomorphic 
for different $\Scal$
and introduce tropical Dolbeault cohomology for $\Xan$. 
In Section \ref{sect comparison theorems}, we prove that for so-called admissible families $\Scal$, 
the complexes $\AS^{\bullet, \bullet}_{\Scal}$ are isomorphic to $\AS^{\bullet, \bullet}$.   
In Section \ref{sect integration} we discuss integration of top-dimensional differential
forms with compact support. 
In Section \ref{sect tropical cohomology Xan} we introduce tropical cohomology with coefficients for $\Xan$ and 
compare it with $\HH^{*,*}$. 
In Section \ref{sect wave and monodromy operator}, we discuss the relation between 
the wave and monodromy operators and consequences thereof. 
Section \ref{sect non-trivial classes} provides partial computations of $\HH^{p,q}(\Xan)$ and $\HH^{p,q}(\Xan, R)$
for curves and toric varieties, using our new approaches. 
In Section \ref{sect open questions} we list open questions that one might ask as a consequence of our results.

\section*{Acknowledgments}

The author would like to thank Walter Gubler, Johannes Rau and Kristin Shaw for helpful comments
and suggestions. 
The idea for the proof of Theorem \ref{thm monodromy wave trop} came from
joint work with Johannes Rau and Kristin Shaw on the paper \cite{JRS}. 
The author would like to express his gratitude for being allowed to use those ideas. 

Parts of this work already appeared in a more a hoc and less conceptual way in the author's 
PhD thesis \cite{JellThesis}. 

The author would also like to thank the referee for their detailed report and specific remarks  
which greatly improved the paper.

\section*{Notations and conventions}

Throughout $K$ is a field that is complete 
with respect to a (possibly trivial) non-archimedean absolute value. 
We denote its value group by $\Gamma := \log \vert K^* \vert$. 
If the absolute value is non-trivial, we normalize it in such a way that $\Z \subset \Gamma$. 
A variety $X$ is an geometrically integral separated $K$-scheme of finite type. 
For any variety $X$ over $K$, we will throughout the paper denote by 
$\Xan$ the analytification in the sense of Berkovich \cite{BerkovichSpectral}.

\section{Toric varieties and tropicalization} \label{sec toric varieties}

\subsection{Toric varieties} 

Let $N$ be a free abelian group of finite rank, $M$ its dual and 
denote by $N_\R$ resp.~$M_\R$ the respective scalar extensions to $\R$. 

\begin{defn} 
A \emph{rational cone} $\sigma \in N_\R$ is a polyhedron defined by equations of the form 
$\varphi(\,.\,) \geq 0$ with $\varphi \in M$, 
that does not contain a positive dimensional linear subspace. 
A \emph{rational fan} $\Sigma$ in $N_\R$ is a polyhedral complex all of whose polyhedra are rational cones. 
For $\sigma \in \Sigma$ we define the monoid 
\begin{align*}
S_\sigma := \{ \varphi \in M \mid \; \varphi(v) \geq 0 \text{ for all } v \in \sigma \}.
\end{align*}
We denote by $U_\sigma := \Spec(K[ S_\sigma])$. 
For $\tau \prec \sigma$ we obtain an open immersion $U_\tau \rightarrow U_\sigma$. 
We define the toric variety $Y_\Sigma$ to be the gluing of the 
$(U_\sigma)_{\sigma \in \Sigma}$ along these open immersions.  
For an introduction to toric varieties, see for example  \cite{Fulton}.
\end{defn}

\begin{bem} \index{Toric variety}
The toric variety $Y_\Sigma$ comes with an open immersion $T \rightarrow Y_\Sigma$, 
where $T = \Spec (K[M])$ and 
a $T$-action that extends the group action of $T$ on itself by translation. 
In fact any normal variety with such an immersion and action arises by the above described procedure 
(\cite[Corollary 3.1.8]{CLS}). 
This was shown by Sumihiro. 

Choosing a basis of $N$ gives an identification $N \simeq \Z^r \simeq M$ and $T \simeq \G_m^r$. 
\end{bem}

\begin{defn}
A map $\psi \colon Y_{\Sigma} \to Y_{\Sigma'}$ is called a \emph{morphism of toric varieties} 
if it is equivariant with respect to the torus actions and restricts to a morphism 
of algebraic groups on dense tori. 
It is called an \emph{affine map of toric varieties} 
if it is a morphism of toric varieties composed with a multiplicative torus translation.
\end{defn}

\begin{bem}
A morphism of toric varieties $\psi \colon Y_{\Sigma} \to Y_{\Sigma'}$ 
is induced by a morphism of corresponding fans, 
meaning a linear map $N \to N'$ that maps cones in $\Sigma$ to cones in $\Sigma'$. 
\end{bem}

\subsection{Tropical toric varieties}

Let $\Sigma$ be a rational fan in $N_\R$. 
We write $\T := \R \cup \{-\infty\}$.
For $\sigma \in \Sigma$ we define $N(\sigma) := N_\R / \langle \sigma \rangle_\R$. 
We write
\begin{align*}
N_\Sigma =  \coprod \limits_{\sigma \in \Sigma} N(\sigma).
\end{align*}
We call the $N(\sigma)$ the \emph{strata} of $N_\Sigma$. 
Note that $N_{\Sigma}$ has a canonical action by $N$ and $N_\R$
and the strata are the strata of the action of $N_\R$. 
We endow $N_\Sigma$ with a topology in the following way:

For $\sigma \in \Sigma$ write $N_\sigma = \coprod \limits_{\tau \prec \sigma} N(\tau)$. 
This is naturally identified with $\Hom_{\Monoids}(S_\sigma, \T)$. 
We equip $\T^{S_\sigma}$ with the product topology and give $N_\sigma$ the subspace topology.
For $\tau \prec \sigma$, the space $\Hom(S_\tau, \T)$ is naturally identified with the open
subspace of $\Hom_{\Monoids}(S_\sigma, \T)$ of maps that map $\tau^{\perp} \cap M$ to $\R$. 
We define the topology of $N_\Sigma$ to be the one obtained by gluing along these identifications.

\begin{defn}
We call the space $N_\Sigma$ a  \emph{tropical toric variety}.
\end{defn}

We would like to remark that the tropical toric variety $N_\Sigma$
can also be constructed by glueing its affine pieces along monomial maps \cite[Section 3.2]{MikRau}

Note that $N_\Sigma$ contains $N_\R$ as a dense open subset. 
For a subgroup $\Gamma$ of $\R$ and each stratum $N(\sigma)$ 
we call the set $N(\sigma)_\Gamma := (N \otimes \Gamma) / \langle \sigma \rangle_\Gamma$ 
the \emph{set of $\Gamma$-points}.   

Let $\Sigma$ and $\Sigma'$ be fans in $N_\R$ and $N'_\R$ respectively. 
Let $L \colon N \to N'$ be a linear map such that $L_\R$ maps every cone 
in $\Sigma$ into a cone in $\Sigma'$. 
Such a map canonically induces a map $N_\Sigma \to N_{\Sigma'}$ 
that is continuous and linear on each stratum. 

\begin{defn}
A map $N_\Sigma \to N_{\Sigma'}$ that 
arises this way is called \emph{morphism of tropical toric varieties}. 

An \emph{affine map} of tropical toric varieties is a map that 
is the composition of morphism of toric varieties with an $N_\R$-translation. 
\end{defn}

\subsection{Tropicalization}

Let $\Sigma$ be a rational fan in $N_\R$. 
Denote by $Y_\Sigma$ the associated toric variety and by $N_\Sigma$ the associated 
tropical toric variety.

\begin{defn} 
Payne defined in \cite{Payne} a tropicalization map 
\begin{align*}
\trop_{\Sigma} \colon Y_\Sigma^{\an} \rightarrow N_\Sigma
\end{align*}
to the topological space $N_\Sigma$ as follows: 
For $\tau \prec \sigma$, the space $\Hom(S_\tau, \T)$ is naturally identified with the open
subspace of $\Hom_{\Monoids}(S_\sigma, \T)$ of maps which map $\tau^{\perp} \cap M$ to $\R$. 
The map $\trop \colon U_\sigma^{\an} \to \Trop(U_\sigma)$ is then defined by mapping 
$\vert\,.\,\vert_x \in U_\sigma^{\an}$ to the homomorphism 
$u \mapsto \log \vert u\vert_x \in \Trop(U_\sigma) = \Hom(S_\sigma, \T)$. 
We will often write $\Trop(Y_\Sigma) := N_\Sigma$. 

For $Z$ a closed subvariety of $Y_\Sigma$ we define $\Trop(Z)$ to be the image of $Z^{\an}$ under 
$\trop \colon Y_\Sigma^{\an} \rightarrow \Trop(Y_\Sigma)$.
\end{defn}

\begin{defn}
The construction $\Trop(Y_\Sigma)$ is functorial with respect to affine maps of toric varieties. 
In particular, for a morphism (resp. affine map) of toric varieties $\psi \colon Y_{\Sigma} \to Y_{\Sigma'}$, 
we obtain a morphism (resp.~affine map) of tropical toric varieties
$\Trop(\psi) \colon \Trop(Y_\Sigma) \to \Trop(Y_{\Sigma'})$. 

If $\psi$ is a closed immersion, then $\Trop(\psi)$ is a homeomorphism onto its image. 
\end{defn}

\begin{Ex} \label{trop of affine embedding}
Affine space $\A^r = \Spec K[T_1,\dots,T_r]$ is the toric variety that arises from the cone 
$\{ x \in \R^r \mid x_i \geq 0 \text{ for all } i \in [r] \}$. 
By definition the tropicalization in then $\T^r$ and the map 
$\trop \colon \A^{r, \an} \to \T^r; \; \vert\;.\;\vert \mapsto (\log\vert T_i \vert)_{i \in [r]}$. 

Let $\sigma$ be a cone in $N_\R$. 
We pick a finite generating set $b_1,\dots,b_r$ of the monoid $S_\sigma$. 
Let $U_\sigma$ be the affine toric variety associated to a cone $\sigma$. 
Then we have a surjective map $K[T_1,\dots,T_r] \to K[S_{\sigma}]$, 
which induces a toric closed embedding $\varphi_B \colon U_\sigma \to \A^r$.
By functoriality of tropicalization we also get a morphism of tropical toric varieties
$\Trop(U_\sigma) \to \T^r$ that is a homeomorphism onto its image. 
\end{Ex}

\subsection{Tropical subvarieties of tropical toric varieties}

In this section we fix a subgroup $\Gamma \subset \R$.

\begin{defn}
An integral $\Gamma$-affine polyhedron in $N_\R$ is a set defined by finitely many 
inequalities of the form $\varphi(\;.\;) \geq r$ for $\varphi \in M, r \in \Gamma$. 
An integral $\Gamma$-affine polyhedron on $N_\Sigma$ is 
the topological closure of an integral $\Gamma$-affine polyhedron in $N(\sigma_\tau)$ for $\sigma_\tau \in \Sigma$. 

Let $\tau$ be an integral $\Gamma$-affine polyhedron in $N_{\Sigma}$. 
For $\sigma \prec \sigma'$ we have that $N(\sigma') \cap \tau$ is a polyhedron in $N(\sigma')$ 
(that might be empty) and we consider this as a face of $\tau$. 
Further we denote by 
$\Linear(\tau) = \{\lambda (u_1 - u_2) \mid u_1, u_2 \in \tau,\;  \lambda \in \R \} \subset N(\sigma_\tau)$
the \emph{linear space of $\tau$}. 
If $\tau$ is integral $\Gamma$-affine, $\Linear(\tau)$ contains a canonical lattice that we denote by $\Z(e)$. 
\end{defn}

\begin{defn}
A tropical subvariety of a tropical toric variety is given the support of 
a integral $\Gamma$-affine polyhedral complex with weights attached 
to its top dimensional faces, satisfying the balancing condition. 
\end{defn}

\begin{defn}
Let $Z$ be a closed subvariety of a toric variety $Y_{\Sigma}$.
Then $\Trop(Z) := \trop(Z^{\an}) \subset N_{\R}$ is a tropical subvariety of $\Trop(Y_\Sigma)$. 
For a variety $X$ and a closed embedding $\varphi \colon X \to Y_\Sigma$ we write
$\Trop_{\varphi}(X) := \Trop(\varphi(X))$ and 
$\trop_{\varphi} := \trop \circ \varphi^{\an} \colon \Xan \to \Trop_{\varphi}(X)$. 
\end{defn}
 
We will never explicitly use the weights nor the balancing condition, 
so the reader may be happy with the fact that there are weights and that they satisfy 
the balancing condition. 
If they are not happy with this, let us refer them to the excellent introduction \cite{Gubler2}. 
The reader who already knows the balancing condition for tropical varieties in $N_\R$ 
will be glad to hear that the balancing condition for $X$ is exactly the classical 
balancing condition for $X \cap N_\R$, 
there are no additional properties required at infinity.

\section{Constructions in Cohomology of tropical varieties} \label{sect tropical cohomology}

In this section, $R$ is a ring such that $\Z \subset R \subset \R$ and 
 $\Gamma$ is a subgroup of $\R$ that contains $\Z$. 
Further $N$ is a free abelian group of finite rank, $M$ is its dual, 
and $\Sigma$ is a rational fan in $N_\R$. 
Additionally $X$ is an integral $\Gamma$-affine tropical subvariety of $N_\Sigma$. 

\begin{defn} 
Let $U \subset N_\R$ an open subset. 
A \emph{superform} of bidegree $(p,q)$ is an element of 
\begin{align*}
\AS^{p,q}(U)  = C^\infty(U) \otimes \Lambda^p M \otimes \Lambda^q M
\end{align*}
\end{defn}

\begin{bem}
There are differential operators $d'$ and $d''$ and a wedge product 
which are induced by the usual differential operator and wedge 
product on differential forms.
\end{bem}

\begin{defn}
For an open subset $U$ of $N_\Sigma$ we write $U_\sigma := U \cap N_\sigma$. 
A superform of bidegree $(p,q)$ on $U$ is given by a collection $\alpha = (\alpha_\sigma)_{\sigma \in \Sigma}$
such that $\alpha_\sigma \in \AS^{p,q}(U_\sigma)$ and for each $\sigma$ and each $x \in U_\sigma$ there 
exists an open neighborhood $U_x$ of $x$ in $U$ such that for each $\tau \prec \sigma$ 
we have $\pi_{\sigma, \tau}^* \alpha_\sigma = \alpha_\tau$. 
We call this the \emph{condition of compatibility at the boundary}. 

For a polyhedron $\sigma$ in $N_\Sigma$ we can define the restriction of a superform $\alpha$ to $\sigma$. 
Let $\Omega$ be an open subset of $\vert \CS \vert$ for a polyhedral complex $\CS$ in $N_\Sigma$. 
The space of superforms of bidegree $(p,q)$ on $\Omega$ is defined as the set of pairs $(U, \alpha)$ where $U$ 
is an open subset of $N_\Sigma$ such that $U \cap \vert \CS \vert = \Omega$ and $\alpha \in \AS^{p,q}(U)$. 
Two such pairs are identified if their restrictions to $\sigma \cap \Omega$ agree for every $\sigma \in \CS$. 
\end{defn}

\begin{defn}
For a tropical subvariety $X$ of a tropical toric variety we obtain a double complex of sheaves 
$(\AS^{\bullet, \bullet}, d', d'')$ on $X$. 
We define $\HH^{p,q}(X)$ (resp.~$\HH^{p,q}_c(X)$) 
as the cohomology of the complex of global sections (resp.~global sections with compact support)
with respect to $d''$.
\end{defn}

\begin{defn}
There exists an integration map $\int_X \colon \AS^{n,n}_c(X) \to \R$ 
that satisfies Stokes' theorem and thus descends to cohomology.
\end{defn}

\begin{bem}
Superforms on tropical subvarieties of tropical toric varieties are functorial with respect to
affine maps of
tropical toric varieties \cite{JSS}.
\end{bem}

\begin{defn}
Let $X$ be a tropical variety and $x \in X$
and denote by $\sigma$ the cone of $\Sigma$ such that $x \in N(\sigma)$.
Then the tropical mutitangent space at $x$ is defined to be
\begin{align*}
\Fbb^R_p(\tau) 
= \left( \sum_{\sigma \in X \cap N(\sigma), x \in \sigma} \Lambda^p \Linear(\sigma)  \right) \cap \Lambda^p R^n
\subset \Lambda^p N(\sigma).
\end{align*}
If $\nu$ is a face of $\tau$, then there are transition maps 
$\iota_{\tau, \nu} \colon \Fbb^R_p(\tau) \to \Fbb^R_p(\nu)$
that are just inclusions if $\tau$ and $\nu$ live in the same stratum and compositions 
with projections to strata otherwise.  
\end{defn}

We denote by $\Delta_q$ the standard $q$-simplex. 
 
\begin{defn}
A smooth stratified $q$-simplex is a map $\delta \colon \Delta_q \to X$ such that:
\begin{itemize}
\item
If $\sigma$ is a face of $\Delta_q$ then there exists a polyhedron $\tau$ in $X$
such that $\mathring \sigma$ is mapped into $\mathring \tau_i$. 
\item
Let $\Delta_q = [0,\dots,q]$. If $\delta(i)$ is contained in the closure of a stratum of $N_\Sigma$, 
then so is $\delta(j)$ for $j \leq i$. 
\item
for each stratum $X_i$ of $X$ the map $\delta \colon \delta^{-1}(X_i) \to X_i$ is $C^\infty$.
\end{itemize}
We denote the free abelian group of smooth stratified $q$-simplices $\delta$
satisfying $\delta (\mathring \Delta_q) \subset \mathring \tau$ by $C_q(\tau)$. 

There is a boundary operator $\partial_{p,q} \colon C_{p,q}(X, R) \to C_{p,q-1}(X, R)$ 
that is given by the usual boundary operator on the simplex side and 
by the maps $\iota_{\tau, \nu}$ on the coefficient side, when necessary. 
Dually we have $\partial^{p,q} \colon C_{p,q}(X, R) \to C_{p,q+1}(X, R)$.
\end{defn}

\begin{defn}
The groups of smooth tropical $(p,q)$-cell and cocells are respectively 
\begin{align*}
C_{p,q}(X, R) &:= \bigoplus_{\tau \subset X} \Fbb^R_p(\tau) \otimes C_q(\tau) \\
C^{p,q}(X, R) &:= \Hom_R (C_{p,q}(X, R), R) 
\end{align*}
\end{defn}

We denote by $\Cund^{p,q}(R)$ the sheafification of $C^{p,q}(X, R)$ as defined in \cite[Definition 3.13]{JSS}
and by $\FS^p_R := \ker (\del \colon \Cund^{p,0}(R) \to \Cund^{p,1}(R))$.
Note that we have $\FS^p_R = \FS^p_\Z \otimes R$. 

\begin{defn}
We denote by 
$\HH^{p,q}_{\trop}(X) := \HH^{q}(C^{p,\bullet}(X, R), \partial) = \HH^{q}(\Cund^{p,\bullet}(X, R), \partial)$
and call this the \emph{tropical cohomology of X with coefficients in $R$}. 
Similarly we define \emph{tropical cohomology of X with coefficients in $R$ with compact support.}
\end{defn}

It was shown in \cite{JSS} that the morphisms of complexes
\begin{align*}
\FS^p_\R \to \AS^{p,\bullet} \text{ and } \FS^p_\R \to \Cund^{p, \bullet},
\end{align*}
that are given by inclusion in degree $0$ are in fact quasi isomorphisms. 
We now want to construct a de Rham morphism, 
meaning a quasi isomorphism $\dR \colon \AS^{p,\bullet} \to \Cund^{p,\bullet}$ 
that is compatible with the respective inclusions of $\FS^p_\R$.

\begin{bem} \label{rem deRham}
Let $v \otimes \delta$ be a smooth tropical $(p,q)$-cell on an open subset $\Omega$ of $X$. 
Then we define for a $(p,q)$ form $\alpha \in \AS^{p,q}(\Omega)$ 
\begin{align*}
\int_{v \otimes \delta} \alpha = \int_{\Delta_q} \delta^{-1} \langle \alpha; v \rangle 
\end{align*}
We have to argue that this integral is well defined, 
since $\delta$ might map parts of $\Delta_q$ to infinity:
Let $X_0$ be the stratum of $X$ to which the barycenter of $\Delta_q$ is mapped. 
Let $\Delta_{q,0} := \delta^{-1}(X_0)$. 
Then we have $\supp (\delta^* \alpha ) \subset \Delta_{q,0}$ by the condition of compatibility 
at the boundary for $\alpha$, hence the integral is finite. 

This defines a morphism 
\begin{align*}
\dR \colon \AS^{p,q}(\Omega) &\to C^{p,q}(\Omega) \\
\alpha &\mapsto \left( v \otimes \delta \mapsto \int_{v \otimes \delta} \alpha \right) 
\end{align*} 
and one directly verifies using the classical Stokes' theorem that this indeed induces a morphism
of complexes 
\begin{align*}
\dR \colon \AS^{p,\bullet} \to \underline{C}^{p,\bullet}.
\end{align*}
that respects the respective inclusions of $\FS^p_{\R}$. 
Since both $\Cund^{p,\bullet}$ and $\AS^{p,\bullet}$ form acyclic resolutions of $\FS^p_\R$, 
the map $\dR$ is a quasi-isomorphism. 
\end{bem}

\begin{defn}
The \emph{monodromy operator} is the unique $\AS^{0,0}$-linear map such that 
\begin{align*}
M \colon \AS^{p,q} &\to \AS^{p-1,q+1}; \\
d' x_I \wedge d''x_J &\mapsto \sum_{k = 1}^p (-1)^{p-k} d'x_{I \setminus i_k} \wedge d''x_{i_k} \wedge d''x_J.
\end{align*}
The wave operator $W \colon C^{p,q}(\R) \to C^{p-1,q+1}(\R)$ is the sheafified version of the map dual to
\begin{align*}
C_{p-1,q+1}(X, \R) &\to C_{p,q}(X, \R); \\
v \otimes \delta &\mapsto (v \wedge (\iota(\delta(1)) - \delta(0))) \otimes \delta|_{[1,\dots,q+1]}.
\end{align*}
\end{defn}

\begin{lem} \label{thm monodromy wave trop}
The diagram
\begin{align*}
\begin{xy}
\xymatrix{
&&\AS^{p-1, 1} \ar[dd]^{\dR}   \\
\FS^p_\R \ar[urr]^{(-1)^{p-1} M} \ar[drr]^{W}   \\
&&\underline{C}^{p-1,1}(\R)
}
\end{xy}
\end{align*}
commutes. 
\end{lem}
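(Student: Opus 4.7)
The plan is to reduce the identity to a direct local computation on generators of $\FS^p_\R$, using the one-dimensional Stokes theorem to evaluate $\dR \circ M$ on a $1$-simplex and then identifying the resulting sum with the Laplace cofactor expansion of the determinant that defines $W$. Since $\dR \circ (-1)^{p-1} M$ and $W$ are both morphisms of sheaves on $X$, and since the inclusions $\FS^p_\R \hookrightarrow \AS^{p,0}$ and $\FS^p_\R \hookrightarrow \Cund^{p,0}$ are compatible with $\dR$ by construction (Remark \ref{rem deRham}), it suffices to verify the equality on local sections. Locally, $\FS^p_\R$ is spanned by constant coefficient sections of the form $\alpha = d'x_I$ with $I = (i_1,\dots,i_p)$ an ordered multi-index, once one has chosen affine coordinates on each stratum (the condition of compatibility at the boundary then pastes these together).

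Fix such $\alpha = d'x_I$ and a generic smooth stratified $(p-1,1)$-cell $v\otimes\delta$ with $v \in \Lambda^{p-1}\Linear(\tau)$, $\delta\colon\Delta_1 \to X$ and $\delta(\mathring{\Delta}_1)\subset\mathring\tau$. Set $u := \iota(\delta(1)) - \delta(0)$; this is well defined as a vector in $\Linear(\tau)$ thanks to the ordering axiom in the definition of smooth stratified simplex. Unwinding the definition of the wave operator gives directly
\begin{align*}
W(d'x_I)(v\otimes\delta) \;=\; \langle d'x_I;\, v \wedge u \rangle.
\end{align*}
On the other side, the monodromy formula together with $(-1)^{p-1+p-k} = (-1)^{k-1}$ yields
\begin{align*}
(-1)^{p-1} M(d'x_I) \;=\; \sum_{k=1}^{p} (-1)^{k-1}\, d'x_{I\setminus i_k}\wedge d''x_{i_k},
\end{align*}
and applying $\dR$ term by term on $v\otimes\delta$ reduces each summand to $\langle d'x_{I\setminus i_k}; v\rangle \cdot \int_{\Delta_1}\delta^* d''x_{i_k}$. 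The inner integral evaluates, by the one-dimensional fundamental theorem of calculus, to $x_{i_k}(\delta(1)) - x_{i_k}(\delta(0)) = x_{i_k}(u)$.

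Summing gives $\dR((-1)^{p-1}M(\alpha))(v\otimes\delta) = \sum_k (-1)^{k-1}\langle d'x_{I\setminus i_k};v\rangle\, x_{i_k}(u)$, and this is precisely the Laplace cofactor expansion of the $p\times p$ determinant $\langle d'x_I; v\wedge u\rangle$ along the column corresponding to the appended vector $u$; the sign $(-1)^{p-1}$ in front of $M$ is inserted exactly to convert the cofactor sign $(-1)^{k+p}$ into $(-1)^{k-1}$. This matches the expression for $W(\alpha)(v\otimes\delta)$ obtained above, completing the verification.

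The two points that require care are sign bookkeeping, which is the main obstacle and which is handled by the identity $(-1)^{p-1+p-k} = (-1)^{k-1}$ together with cofactor expansion along the last column; and making sense of the integral $\int_{\Delta_1}\delta^*d''x_{i_k}$ when $\delta$ traverses different strata of $N_\Sigma$, which is guaranteed by the compatibility-at-the-boundary condition in the definition of superforms (so that $d''x_{i_k}$ pulls back correctly near the boundary of $\Delta_1$) and by the ordering axiom on stratified simplices (so that $\delta(0)$ lies in the closure of the stratum of $\delta(1)$, making the secant vector $u$ well defined).
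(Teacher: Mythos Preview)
Your approach is essentially the paper's: reduce to local generators $\alpha=d'x_I$, evaluate both sides on a $(p-1,1)$-cell, and use the fundamental theorem of calculus for the $1$-simplex integral. The only difference is that the paper fixes the convenient choice $v=x_1\wedge\dots\wedge x_{p-1}$ so that all but one summand vanishes, while you keep $v$ arbitrary and invoke Laplace expansion; that is a perfectly good variant.

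There is, however, a genuine sign slip in your final identification. Expanding $\langle d'x_I;\,v\wedge u\rangle$ along the last column (the one carrying $u$) gives
\[
\langle d'x_I;\,v\wedge u\rangle \;=\; \sum_{k=1}^{p}(-1)^{k+p}\,x_{i_k}(u)\,\langle d'x_{I\setminus i_k};v\rangle,
\]
whereas your computation of $\dR\bigl((-1)^{p-1}M(\alpha)\bigr)(v\otimes\delta)$ yields the same sum with signs $(-1)^{k-1}$. These differ by a global factor $(-1)^{p-1}$, so as written the two sides do \emph{not} match. Your explanatory sentence (``the sign $(-1)^{p-1}$ in front of $M$ is inserted exactly to convert the cofactor sign $(-1)^{k+p}$ into $(-1)^{k-1}$'') double-counts: that factor has already been absorbed into producing $(-1)^{k-1}$ on the $M$-side and cannot be reused on the determinant side. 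If you look at the paper's own proof you will see that it actually pairs $\alpha$ with $u\wedge v$ (secant vector first), for which Laplace expansion along the \emph{first} column gives the signs $(-1)^{k+1}=(-1)^{k-1}$ and everything matches; so the discrepancy traces back to an ordering inconsistency between the paper's definition of the dual-to-$W$ map ($v\wedge u$) and the computation in its proof ($u\wedge v$). You should flag which convention you are using and make the signs honest with respect to it.

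A smaller point: your handling of the case where $\delta$ crosses strata is a bit breezy. Saying that compatibility at the boundary makes the integral ``well defined'' is not quite the same as showing that $\int_{\Delta_1}\delta^*d''x_{i_k}$ actually equals $x_{i_k}(u)$ when $x_{i_k}$ is unbounded on $\delta([0,1])$. The paper's argument here is specific: when $x_{i_k}$ blows up at an endpoint, compatibility forces $d''x_{i_k}$ (as a component of a section of $\FS^p$) to vanish near that endpoint, so one may replace $[0,1]$ by $[\varepsilon,1]$ or $[0,1-\varepsilon]$ without changing the integral, and on the shortened interval the fundamental theorem applies. You should spell this out rather than absorb it into a one-line remark.
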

\begin{proof}
By the definition of the de Rham map we have to show that
\begin{align*}
\int_{[0,1]} \delta^* \langle M(\alpha), v \rangle = (-1)^{p-1} \langle \alpha,  W(e \otimes v) \rangle
\end{align*}
where $\delta \colon [0,1] \to X$ is a smooth stratified $1$-simplex and 
$v \in \Fbb^{p-1}(\tau)$, where $\delta((0,1)) \subset \mathring \tau$ and 
$\alpha \in \FS^{p}$.  
After picking bases and using multilinearity for both $v$ and $\alpha$ we may assume that
$\alpha = d'x_1 \wedge \dots \wedge d'x_p$ and $v = x_1 \wedge \dots \wedge x_{p-1}$. 
Then we have
\begin{align*}
\int_{[0,1]} \delta^* \langle M(\alpha), v \rangle &= 
\sum_{i=1}^p (-1)^{p-i} \int _{[0,1]} \delta^* 
\langle d'x_1 \wedge \dots \widehat{d'x_i} \wedge \dots \wedge d'x_p,  
x_1 \wedge \dots \wedge x_{p-1} \rangle \wedge d''x_i \\ 
&= 
\int_{[0,1]} \delta^* \langle d'x_1 \wedge \dots \wedge d''x_{p-1} , 
x_1 \wedge \dots \wedge x_{p-1} \rangle \wedge d''x_p \\
&= \int_{[0,1]} \delta^* d''x_{p} = dx_p(\delta(1)) - dx_{p}(\delta(0))
\end{align*}
and
\begin{align*}
\langle d'x_1 \wedge \dots \wedge d'x_{p} , \delta(1) - \delta(0) \wedge x_1 \wedge \dots \wedge x_{p-1} \rangle = 
(-1)^{p-1} dx_p({\delta(1)}) - dx_p({\delta(0)}),
\end{align*}
where $dx_p$ denotes the $p$-th coordinate functions with respect to $x_1,\dots,x_n$. 
This calculation holds true as long as the $p$-th coordinate functions is bounded on $\delta([0,1])$.
If this is not the case however, then $d''x_p$ vanishes in a neighborhood of $\delta(0)$  (resp. $\delta(1)$) 
by the compatibility condition
and we may replace $[0,1]$ by $[\varepsilon, 1]$ resp.~$[\varepsilon, 1-\varepsilon]$ resp.~$[0,1-\varepsilon]$. 
\end{proof}

\begin{satz} \label{Wave Monodromy tropical theorem}
The wave and the monodromy operator agree on cohomology up to sign by virtue of the isomorphism $\dR$, 
meaning that the diagram
\begin{align*}
\begin{xy}
\xymatrix{
\HH^{p,q}(X) \ar[rr]^{(-1)^{p-1} M} \ar[d]_{\dR} && \HH^{p-1,q+1}(X) \ar[d]^{\dR} \\
\HH^{p,q}_{\trop}(X, \R) \ar[rr]^W && \HH^{p-1,q+1}_{\trop}(X, \R)
}
\end{xy}
\end{align*}
commutes.
The same is true for cohomology with compact support. 
\end{satz}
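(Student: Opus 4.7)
The plan is to reduce the theorem to the sheaf-level commutativity provided by Lemma~\ref{thm monodromy wave trop}, by identifying both columns of the diagram with the sheaf cohomology $\HH^\bullet(X,\FS^p_\R)$.

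First, I would note that $(\AS^{p,\bullet},d'')$ and $(\underline{C}^{p,\bullet},\partial)$ are both acyclic resolutions of $\FS^p_\R$ by soft sheaves, so their complexes of global sections (resp.~of compactly supported sections) compute $\HH^\bullet(X,\FS^p_\R)$ (resp.~$\HH_c^\bullet(X,\FS^p_\R)$). Since $\dR$ is a quasi-isomorphism of complexes of sheaves restricting to the identity on $\FS^p_\R$, it induces the identity on sheaf cohomology. Under these identifications, the theorem reduces to the claim that $(-1)^{p-1}M$ and $W$ induce the same map $\HH^q(X,\FS^p_\R)\to\HH^{q+1}(X,\FS^{p-1}_\R)$.

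Next, I would check that $(-1)^{p-1}M$ and $W$ are genuine chain maps of shifted complexes $\AS^{p,\bullet}\to\AS^{p-1,\bullet+1}$ and $\underline{C}^{p,\bullet}\to\underline{C}^{p-1,\bullet+1}$; for $M$ this is an elementary calculation on monomials $f\,d'x_I\wedge d''x_J$ showing $d''M+Md''=0$, and for $W$ it is the dual statement on smooth stratified simplices. Each operator then defines a morphism $\FS^p_\R\to\FS^{p-1}_\R[1]$ in the derived category of sheaves of $\R$-vector spaces on $X$, equivalently a class in $\Ext^1(\FS^p_\R,\FS^{p-1}_\R)$. The core observation is that such an Ext class can be read off from the chain map by restricting to the degree-zero piece $\FS^p_\R$ inside the source resolution: the class of $(-1)^{p-1}M$ is represented by the cocycle $(-1)^{p-1}M|_{\FS^p_\R}\colon\FS^p_\R\to\AS^{p-1,1}$, and that of $W$ by $W|_{\FS^p_\R}\colon\FS^p_\R\to\underline{C}^{p-1,1}$. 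The quasi-isomorphism $\dR$ identifies the two Ext-computing complexes $\Hom(\FS^p_\R,\AS^{p-1,\bullet})$ and $\Hom(\FS^p_\R,\underline{C}^{p-1,\bullet})$, and Lemma~\ref{thm monodromy wave trop} states precisely that these two cocycles coincide after applying $\dR$. Hence the two Ext classes are equal.

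This equality yields the desired commutativity on $\HH^\bullet(X,\FS^p_\R)$, and thereby on $\HH^{p,q}(X)$ and $\HH^{p,q}_{\trop}(X,\R)$. The compactly supported case follows by the same argument, as all ingredients (soft resolutions, the quasi-isomorphism $\dR$, and the purely sheaf-theoretic Ext computation) are compatible with the compact-support functor $\Gamma_c$. The main obstacle is the homological-algebra bookkeeping underlying the identification of an Ext class with the degree-zero restriction of a chain map between acyclic resolutions; this is a standard derived-category fact but would merit explicit verification, either via a spectral-sequence computation or via an explicit \v{C}ech-cocycle chase starting from a $d''$-cocycle $\alpha\in\AS^{p,q}$ and tracking $\dR\bigl((-1)^{p-1}M(\alpha)\bigr)$ against $W(\dR(\alpha))$.
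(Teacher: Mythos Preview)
Your proposal is correct and follows essentially the same route as the paper: both reduce to showing that the square of chain maps commutes in the derived category, replace the source resolution by $\FS^p_\R$ via the quasi-isomorphic inclusion, and then invoke Lemma~\ref{thm monodromy wave trop} to identify the two restricted maps $\FS^p_\R \to \AS^{p-1,1}$ and $\FS^p_\R \to \underline{C}^{p-1,1}$ under $\dR$. Your Ext-class framing is slightly more elaborate than the paper's bare derived-category statement, but the content is identical.
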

\begin{proof}
The wave and monodromy operators give morphisms of complexes  is a morphism of complexes
\begin{align*}
W \colon \underline{C}^{p,\bullet}(\R) \to \underline{C}^{p-1,\bullet}(\R)[1] \text{ and } 
M \colon \AS^{p,\bullet} \to \AS^{p-1, \bullet}[1],
\end{align*}
hence it is sufficient to show that 
\begin{align*}
\begin{xy}
\xymatrix{
\AS^{p, \bullet} \ar[d]^{\dR} \ar[rr]^{(-1)^{p-1}M} &&\AS^{p-1, \bullet}[1] \ar[d]^{\dR}   \\
\underline{C}^{p,\bullet}(\R) \ar[rr]^W && \underline{C}^{p-1,\bullet}(\R)[1]  
}
\end{xy}
\end{align*}
commutes in the derived category. 
Replacing both $\AS^{p, \bullet}$ and $\Cund^{p, \bullet}$ 
with the quasi-isomorphic $\FS^p_\R$, 
we have to show that
\begin{align*}
\begin{xy}
\xymatrix{
&&\AS^{p-1, 1} \ar[dd]^{\dR}   \\
\FS^p_\R \ar[urr]^{(-1)^{p-1} M} \ar[drr]^{W}   \\
&&\underline{C}^{p-1,1}(\R)
}
\end{xy}
\end{align*}
commutes. 
This follows directly from Theorem \ref{thm monodromy wave trop}.
\end{proof}

\begin{prop} \label{prop wave restricts}
The wave operator descends to an operator on cohomology
\begin{align*}
W &\colon \HH^{p,q}_{\trop}(X, R) \to \HH^{p-1,q+1}_{\trop}(X, R[\Gamma]) \text{ and } \\
W &\colon \HH^{p,q}_{\trop,c}(X, R) \to \HH^{p-1,q+1}_{\trop,c}(X, R[\Gamma]),
\end{align*}
where $R[\Gamma]$ is the smallest subring of $\R$ that contains both $R$ and $\Gamma$. 
\end{prop}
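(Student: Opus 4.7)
The plan is to show that $W$, as a morphism of sheaves of complexes $\underline{C}^{p, \bullet}(\R) \to \underline{C}^{p-1, \bullet}(\R)[1]$, restricts to a morphism $\underline{C}^{p, \bullet}(R) \to \underline{C}^{p-1, \bullet}(R[\Gamma])[1]$ at the level of a quasi-isomorphic subcomplex, yielding the claimed map on cohomology. The obstacle to arguing directly at the (co)chain level is that for a general smooth stratified simplex $\delta$, the vector $\delta(1) - \delta(0)$ is only a real vector in $\Linear(\tau)$, so the formula $(v \wedge (\delta(1) - \delta(0))) \otimes \delta|_{[1, \ldots, q+1]}$ does not a priori produce a chain with $R[\Gamma]$-coefficients when $v \in \Fbb^R_{p-1}(\tau)$.

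To sidestep this, I would restrict to the subcomplex $\underline{C}^{p, \bullet}_\Gamma(R) \subset \underline{C}^{p, \bullet}(R)$ of cochains supported on chains $v \otimes \delta$ where $\delta$ is a straight-line simplex whose vertices (after the canonical projections between strata) lie in $N_\Gamma$. Using the $\Gamma$-rational polyhedral structure of $X$ together with a standard piecewise-linear subdivision argument, this subcomplex is also an acyclic resolution of $\FS^p_R$, so $\HH^q(\underline{C}^{p, \bullet}_\Gamma(R)) = \HH^{p, q}_{\trop}(X, R)$.

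On this subcomplex the chain-level wave map preserves integrality: for a $\Gamma$-affine straight-line simplex $\delta$, the difference $\delta(1) - \delta(0)$ lies in the canonical $\Gamma$-lattice $\Z(e) \otimes \Gamma$ of $\Linear(\tau)$, so $v \wedge (\delta(1) - \delta(0)) \in \Fbb^{R[\Gamma]}_p(\tau')$ whenever $v \in \Fbb^R_{p-1}(\tau')$. Dualizing yields a morphism of complexes
\[ W \colon \underline{C}^{p, \bullet}_\Gamma(R) \to \underline{C}^{p-1, \bullet}_\Gamma(R[\Gamma])[1], \]
which agrees on cohomology with the $\R$-valued wave operator restricted to the $R$-submodule, producing the desired map. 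The compactly-supported case is identical since $W$ does not enlarge supports, so the compactly-supported $\Gamma$-subcomplex still resolves $\HH^{p, q}_{\trop, c}(X, R)$. The main obstacle is the subdivision argument underlying the quasi-isomorphism $\underline{C}^{p, \bullet}_\Gamma(R) \to \underline{C}^{p, \bullet}(R)$: one must verify that any smooth stratified simplex can be approximated by $\Gamma$-affine straight-line simplices compatibly with the boundary operator, which is a routine but technical piecewise-linear approximation.
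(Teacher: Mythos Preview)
Your approach is essentially the same as the paper's: both restrict attention to simplices whose vertices are $\Gamma$-points, so that $\delta(1)-\delta(0)$ lies in the $\Gamma$-lattice and $W$ lands in $R[\Gamma]$-coefficients. The paper is slightly more economical, fixing a single triangulation of $X$ with $\Gamma$-rational vertices and citing \cite[Section 2.2]{MikZhar} for the fact that simplicial cochains on such a triangulation compute $\HH^{p,q}_{\trop}(X,R)$ and the wave map, thereby replacing your sheaf-level approximation argument with a direct reference.
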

\begin{proof}
We pick a triangulation of $X$ with smooth stratified simplices 
such that all vertices are $\Gamma$-points.
We can now compute $\HH^{p,q}_c(X, R)$ and
the wave homomorphism using this triangulation by \cite[Section 2.2]{MikZhar}. 
For a $(p,q)$-chain $\delta$ with respect to this triangulation and with coefficients in $R$, 
we now have that $W(\delta)$ has coefficients in $R[\Gamma]$. 
Hence $W$ restricts to a map 
$\HH^{p,q}(X, R) \to \HH^{p-1,q+1}(X, R[\Gamma])$ 
resp.~$\HH^{p,q}_c(X, R) \to \HH^{p-1,q+1}_c(X, R[\Gamma])$.  
\end{proof}

\begin{defn} \label{defn fundamental class}
We denote by 
\begin{align*}
\cap [X]_R \colon C^{n,n}_c(X, R) \to R
\end{align*}
the \emph{evaluation against the fundamental class}, as defined in \cite[Definition 4.8]{JRS}
and also the induced map on cohomology $\HH^{n,n}_{\trop,c}(X, R) \to R$. 
\end{defn}

\begin{prop} \label{prop int cap}
The following diagram commutes
\begin{align*}
\begin{xy}
\xymatrix
{
\AS^{n.n}_c(X) \ar[rr]^{\int_X} \ar[d]_{\dR} && \R \\
C^{n,n}_c(X, \R)  \ar[rru]_{\cap [X]}.
}
\end{xy}
\end{align*}
\end{prop}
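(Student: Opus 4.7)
The plan is to evaluate both compositions on a fixed triangulation of $X$ and to identify them term by term.

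First, I fix a smooth stratified triangulation $\mathcal{T}$ of $X$ subordinate to its polyhedral structure, meaning that every simplex of $\mathcal{T}$ is mapped into the relative interior of a unique polyhedron of $X$. By the same argument as in the proof of Proposition \ref{prop wave restricts}, tropical cohomology with compact support can be computed via such a triangulation. The fundamental class $[X]_\R$ of Definition \ref{defn fundamental class} is then represented by the $(n,n)$-chain
$$c_X = \sum_\sigma m_\sigma \sum_{\delta \subset \sigma} v_\sigma \otimes \delta,$$
where $\sigma$ ranges over top-dimensional polyhedra of $X$ with tropical weights $m_\sigma$, the inner sum runs over the top-dimensional simplices $\delta$ of $\mathcal{T}$ with $\delta(\mathring \Delta_n) \subset \mathring \sigma$, and $v_\sigma \in \Lambda^n \Linear(\sigma)$ is the unit multivector generating $\Lambda^n \Z(e)$, oriented compatibly with $\delta$.

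Unfolding the two maps around the triangle, I get
$$\dR(\alpha) \cap [X] = \sum_\sigma m_\sigma \sum_{\delta \subset \sigma} \int_{\Delta_n} \delta^* \langle \alpha, v_\sigma \rangle, \qquad \int_X \alpha = \sum_\sigma m_\sigma \int_\sigma \langle \alpha, v_\sigma \rangle,$$
where $\int_\sigma$ is computed using the Haar measure on $\Linear(\sigma)$ normalized so that $\Z(e)$ has covolume $1$. Commutativity thus reduces, for each fixed top-dimensional polyhedron $\sigma$, to the classical identity
$$\int_\sigma \langle \alpha, v_\sigma \rangle = \sum_{\delta \subset \sigma} \int_{\Delta_n} \delta^* \langle \alpha, v_\sigma \rangle,$$
which follows from the ordinary change-of-variables formula applied to the triangulation of $\sigma$ induced by $\mathcal{T}$.

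The main obstacle is the boundary behavior: a top-dimensional polyhedron $\sigma$ of $X$, and likewise the image $\delta(\Delta_n)$ of a simplex contained in it, may be unbounded in $N_\R$ or meet lower strata of $N_\Sigma$, so the scalar integrals above are a priori improper. However, the condition of compatibility at the boundary imposed on $\alpha$ ensures that the function $\langle \alpha, v_\sigma \rangle$ has support contained in a single (top) stratum of $\sigma$, exactly as exploited in Remark \ref{rem deRham}. Combined with the compact support of $\alpha$, this makes every integral above finite and reduces the comparison to a finite sum of classical, compactly supported integrals on Euclidean pieces, to which the usual triangulation argument applies directly.
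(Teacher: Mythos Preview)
Your proof is correct and is precisely the ``straightforward calculation using the definitions'' that the paper invokes without writing out: you unpack $\dR$ and $\cap [X]$ on a fixed triangulation, reduce to the classical change-of-variables identity on each top-dimensional polyhedron, and handle the strata at infinity via the compatibility condition exactly as in Remark~\ref{rem deRham}. There is no alternative route here to compare against; you have simply supplied the details the paper omits.
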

\begin{proof}
This is a straightforward calculation using the definitions. 
\end{proof}

\section{Families of tropicalizations} \label{families of trop}

In this section, $K$ is a complete non-archimedean field and 
$X$ is a $K$-variety.

\subsection{Definitions}

The philosophy throughout the paper will be that we can approximate 
non-archimedean analytic spaces through embedding them into 
toric varieties and tropicalizing.
We will define families that approximate the analytic space well enough 
(fine enough families) as well as notions that tell us that 
two families basically contain the same amount of information 
(final and cofinal families).

\begin{defn} \label{defn family}
A \emph{family of tropicalizations} $\Scal$ of $X$ consists of the following data:
\begin{enumerate}
\item
A class $\Scal_{\map}$ containing closed embeddings $\varphi \colon U \to Y_\Sigma$ 
for open subsets $U$ of $X$ and toric varieties $Y_\Sigma$.
\item
For an element $\varphi \colon U \rightarrow Y_\Sigma$ of $\Scal_{\map}$ a subclass 
$\Scal_\varphi$ of $\Scal_{\map}$ 
that contains maps $\varphi' \colon U' \to Y_{\Sigma'}$ for open subsets $U' \subset U$ and 
such that there exists an affine map of toric varieties $\psi_{\varphi,\varphi'}$, such that 
\begin{align*}
\begin{xy}
\xymatrix{
U' \ar[d]_{\iota} \ar[r]^{\varphi'} & Y_{\Sigma'} \ar[d]^{\psi_{\varphi, \varphi'}} \\
U \ar[r]^{\varphi}& Y_\Sigma 
}
\end{xy}
\end{align*}
commutes.
Such a $\varphi'$ is called \emph{refinement} of $\varphi$. 
The map $\psi_{\varphi,\varphi'}$ induces an affine map of toric varieties $\Trop(\psi_{\varphi, \varphi'})$. 
The restriction of $\Trop(\psi_{\varphi, \varphi'})$ to $\Trop_{\varphi'}(U')$
 depends only on $\varphi$ and $\varphi'$, so we denote
this map by $\Trop(\varphi, \varphi') \colon \Trop_{\varphi'}(U') \to \Trop_\varphi(U)$. 

We further require that if $\varphi'$ is a refinement of $\varphi$ and $\varphi''$ is a refinement of 
$\varphi'$, then $\varphi''$ be a refinement of $\varphi$. 
\end{enumerate}
A \emph{subfamily} of tropicalizations of $\Scal$ is a family of tropicalizations $\Scal'$ 
such that all embeddings and refinements in $\Scal$ are also in $\Scal'$. 
Further $\Scal'$ is said to be a \emph{full subfamily} if whenever $\varphi, \varphi' \in \Scal'_{\map}$ 
and $\varphi'$ is a refinement of $\varphi$ in $\Scal$, it is also a refinement 
of $\varphi$ in $\Scal'$. 
\end{defn}

Foster, Gross and Payne study in \cite{FGP} so-called ``Systems of toric embeddings''. 
They only consider the case where all of $X$ is embedded into the toric variety, 
a case we later call \emph{global families} of tropicalizations. 

\begin{defn}
Let $\Scal$ be a family of tropicalizations on $X$.
An \emph{$\Scal$-tropical chart} is given by a pair $(V, \varphi)$ 
where $\varphi \colon U \to Y_\Sigma \in \Scal_{\map}$
and $V = \trop_{\varphi}^{-1}(\Omega)$ is an open subset of $\Xan$ which is the 
preimage of an open subset $\Omega$ of $\Trop_{\varphi}(U)$.

Another $\Scal$ tropical chart $(V', \varphi')$ is called an \emph{$\Scal$-tropical subchart} of $(V, \varphi)$ 
if $\varphi'$ is a refinement of $\varphi$ and $V' \subset V$. 
\end{defn}

Note that we have $\Omega = \trop_{\varphi}(V)$.

\begin{Ex}
The family of all tropicalizations is a family of tropicalizations in the 
sense of Definition \ref{defn family}. 
We denote it by $\tor$. 
\end{Ex}

In the following definition, we define the terms \emph{final} and \emph{cofinal} for
two families of tropicalizations $\Scal$ and $\Scal'$. 
While the definitions are a bit on the technical side, the idea is that if $\Scal'$
is either final or cofinal for $\Scal$, then $\Scal$-tropical charts 
provide the same information as $\Scal'$-tropical charts. 

\begin{defn} \label{defn final}
Let $\Scal$ and $\Scal'$ be families of tropicalizations.
We say $\Scal'$ is \emph{cofinal}  for $\Scal$ if for every embedding $\varphi \colon U \to Y_\Sigma$
in $\Scal_{\map}$ and every $x \in U^{\an}$ there exists
$\varphi' \colon U' \to Y_{\Sigma'}$ in $\Scal'_{\map}$ with $x \in U'^{\an}$ such that 
$\varphi'|_{U' \cap U}$ restricts to a closed embedding of $U \cap U'$ into an open torus invariant subvariety 
of $Y_{\Sigma'}$, and that embedding is a refinement of $\varphi$ in $\tor$. 

$\Scal'$ is said to be \emph{final} for $\Scal$ 
for every $\varphi \colon U \to Y_{\Sigma}$ in $\Scal_{\map}$ and $x \in U^{\an}$, 
there exists a refinement $\varphi' \colon U' \to Y_{\Sigma'}$ with $x \in U'^{\an}$, 
a closed embedding $m \colon Y_{\Sigma'} \to Y_{\Sigma''}$, 
that is an affine map of toric varieties, 
such that $m\circ \varphi'$ is in $\Scal'_{\map}$ and $\varphi'$ is a refinement of $m \circ \varphi'$ via $m$ 
in $\Scal$.  
\end{defn}

\begin{defn}
A family of tropicalizations is called \emph{fine enough} if the sets $V$ such 
that there exist $\Scal$-tropical charts $(V, \varphi)$ form 
a basis of the topology of $\Xan$ and for each pair of $\Scal$-tropical charts 
$(V_1, \varphi_1)$ and $(V_2,\varphi_2)$ there exists $\Scal$-tropical charts $(V_i, \varphi_i)_{i \in I}$,
which are $\Scal$-tropical subcharts of both $(V_1, \varphi_1)$ and 
$(V_2, \varphi_2)$, such that $V_1\cap V_2 = \cup V_i$. 
\end{defn}

\begin{lem} \label{lem fine enough}
Let $\Scal'$ be a full subfamily of $\Scal$ and assume that $\Scal$ is fine enough. 
If $\Scal'$ is final or cofinal in $\Scal$, then $\Scal'$ is also fine enough.
\end{lem}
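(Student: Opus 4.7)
The plan is to verify the two requirements of being fine enough for $\Scal'$: that the $\Scal'$-chart sets form a basis of the topology of $\Xan$, and that the intersection of any two $\Scal'$-charts is covered by common $\Scal'$-subcharts.

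For the basis property, I start with an open $W \subset \Xan$ and a point $x \in W$. Using that $\Scal$ is fine enough, I pick an $\Scal$-chart $(V, \varphi)$ with $\varphi \colon U \to Y_\Sigma$, $V = \trop_\varphi^{-1}(\Omega)$, and $x \in V \subset W$. Then I apply the final or cofinal hypothesis to $(\varphi, x)$: in the final case this produces a refinement $\varphi' \colon U' \to Y_{\Sigma'}$ of $\varphi$ in $\Scal$ together with a closed toric embedding $m \colon Y_{\Sigma'} \to Y_{\Sigma''}$ such that $m \circ \varphi' \in \Scal'_{\map}$; in the cofinal case it produces $\varphi' \colon U' \to Y_{\Sigma'}$ in $\Scal'_{\map}$ whose restriction $\tilde\varphi := \varphi'|_{U \cap U'}$ factors through an open torus-invariant subvariety $Y_{\Sigma''} \subset Y_{\Sigma'}$ and refines $\varphi$ in $\tor$. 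In each case, the induced map at the level of tropicalizations -- either $\Trop(m)$, which is a homeomorphism onto its image, or the affine toric map refining $\varphi$ -- lets me pull $\Omega$ back to a suitable open $\Omega'$, so that setting $V'$ to be the preimage of $\Omega'$ under the corresponding tropicalization map yields an $\Scal'$-chart with $x \in V' \subset V \subset W$. In the cofinal case, to guarantee $V' \subset V$, I restrict $\Omega'$ to lie inside the open stratum $N_{\Sigma''} \subset N_{\Sigma'}$ and invoke connectedness of the integral open $U \cap U'$ to identify $\trop_{\varphi'}^{-1}(N_{\Sigma''})$ with $(U \cap U')^{\an}$.

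For the intersection property, I take two $\Scal'$-charts $(V_1, \varphi'_1)$ and $(V_2, \varphi'_2)$; since $\Scal' \subset \Scal$, they are also $\Scal$-charts. I apply the intersection property of $\Scal$ to obtain $\Scal$-subcharts $(W_k, \psi_k)_{k \in K}$ of both, covering $V_1 \cap V_2$. For each $k$ and $y \in W_k$, I apply the basis construction just described to $(W_k, \psi_k)$ and $y$ to produce an $\Scal'$-chart $(V'_{k,y}, \varphi'_{k,y})$ with $y \in V'_{k,y} \subset W_k$. These cover $V_1 \cap V_2$, and to check that each is an $\Scal'$-subchart of both $(V_i, \varphi'_i)$, I chain the refinement of $\varphi'_{k,y}$ over $\psi_k$ provided by the final or cofinal construction with the refinement of $\psi_k$ over each $\varphi'_i$ in $\Scal$; fullness of $\Scal'$ in $\Scal$ then promotes the resulting refinement to one in $\Scal'$.

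The main obstacle is the cofinal case of the basis step: the $\Scal'_{\map}$-map $\varphi'$ produced by cofinality is defined on $U'$ not necessarily contained in $U$, so $\varphi'$ itself is not a refinement of $\varphi$ in $\Scal$, and only the restriction $\tilde\varphi$ is, and only in $\tor$. The connectedness argument on $U \cap U'$ is what lets me confine $V'$ to $(U \cap U')^{\an}$ and thence to $V$. In the intersection step, analogous care is needed when tracking refinements through the auxiliary data $m$ in the final case and through the restriction $\tilde\varphi$ in the cofinal case, but in both situations the fullness assumption is exactly what makes the resulting chain of refinements land in $\Scal'$.
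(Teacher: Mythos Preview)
Your proof follows the same strategy as the paper's---produce, for each $\Scal$-chart $(V,\varphi)$ around a given point, an $\Scal'$-chart around that point contained in $V$ (indeed an $\Scal$-subchart of $(V,\varphi)$), then use fullness to handle the intersection condition---only you supply considerably more detail than the paper's three-line sketch. One small correction: in the cofinal case your clopen argument really uses the connectedness of $\varphi'^{-1}(Y_{\Sigma''})$ (open in the integral variety $U'$, hence irreducible), not of $U\cap U'$; with that adjustment the identification $(U\cap U')^{\an} = \trop_{\varphi'}^{-1}(N_{\Sigma''})$ goes through.
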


\begin{proof}
Let $x \in \Xan$. 
Since $\Scal$ is fine enough and $\Scal'$ is a full subfamily, 
it is sufficient to prove that given an $\Scal$-tropical chart $(V, \varphi)$ with $x \in V$, 
there exists an $\Scal$-tropical chart $(V', \varphi')$ with $x \in V'$
that is a $\Scal$-tropical subchart of $(V, \varphi)$.
  
If $\Scal'$ is cofinal in $S$, then we pick $\varphi'$ as in Definition \ref{defn final}. 
Then $( V \cap U'^{\an}, \varphi')$ is a tropical chart.

If $\Scal'$ is final, then $(V \cap U'^{\an}, m \circ \varphi' )$ is a tropical chart. 
\end{proof}

\subsection{Examples}

In the following we will give examples of families of tropicalizations for a variety $X$. 
We will always specify the class $S_{\map}$ and for $\varphi \in S_{\map}$ simply define $\Scal_{\varphi}$ 
to be those $\varphi'$ where an affine map of toric varieties $\psi_{\varphi, \varphi'}$ as required 
in Definition \ref{defn family} $iii)$ exists. 
The exception to this rule are Example \ref{example linear}, where we require the map $\psi_{\varphi, \varphi'}$ 
to be a coordinate projection in order for $\varphi'$ to be a refinement of $\varphi$ and 
Example \ref{defn Gcan}. 

\begin{Ex}
The family $\A$ is the family whose 
embeddings are closed embeddings of affine open subsets of $X$ into affine space. 
This family is fine enough by the definition of the topology of $\Xan$.

Suppose we are given an embedding $\varphi \colon U \to Y_{\Sigma}$ of an open subset of $X$ 
into a toric variety with $x \in U^{\an}$. 
Let $Y_\sigma$ be an open affine toric subvariety of $Y_{\Sigma}$ such that $Y_{\sigma'}^{\an}$
contains $\varphi^{\an}(x)$.
Let $\varphi' := \varphi|_{\varphi^{-1}(Y_\sigma)}$.
Now we pick a toric embedding of $m \colon Y_\sigma \to \A^n$ as in Example \ref{trop of affine embedding}.
This shows that $\A$ is final in $\tor$.  
\end{Ex}

\begin{Ex} \label{defn G}
The family $\G$ is the family whose embeddings 
are closed embeddings of very affine open subsets of $X$ into $\G_m^n$. 

This family is also fine enough if the base field is non-trivially valued \cite[Proposition 4.16]{Gubler}, 
but not when $K$ is trivially valued \cite[Example 3.3.1]{JellThesis}.
\end{Ex}

\begin{Ex} \label{defn Gcan}
Assume that $K$ is algebraically closed. 
Let $X$ be a variety and $U$ a very affine open subset. 
Then $M = \Ocal^*(U) / K^*$ is a free abelian group of finite rank and the canonical map
$K[M] \to \Ocal(U)$ induces a closed embedding 
$\varphi_U \colon U \inj T$ for a torus $T$ with character lattice $M$. 
The embedding $\varphi_U$ is called the \emph{canonical moment map of $U$}. 
We denote by $\G_{\can}$ the family of tropicalizations 
where $\G_{\can, \map} = \{ \varphi_U \mid U \subset X \text{ very affine} \}$
and refinements being the maps induced by inclusions.  

It is easy to see that this family is cofinal in $\G$. 
\end{Ex}

\begin{defn}
A family of tropicalizations $\Scal$ for a variety $X$ is called \emph{global} 
if all $\varphi \in \Scal_{\map}$ are defined on all of $X$. 
\end{defn}

Global families of tropicalizations will play a special role, as they will allow 
us to construct classes in tropical Dolbeault cohomology. 

\begin{defn} \label{condition dagger}
We say that $X$ satisfies condition $(\dagger)$ 
if $X$ is normal and every two points in $X$ have a common affine neighborhood. 
\end{defn}

By W{\l}odarczyk's Embedding Theorem, Condition $(\dagger)$ 
is equivalent to $X$ being normal and admitting a closed embedding into a toric variety (cf.~\cite{Wlod}). 
Observe also that it is satisfied by any quasi-projective normal variety. 
It is however weaker then being normal and quasi-projective, 
as there exist proper toric varieties which are not projective. 

\begin{Ex} \label{ex systems of toric embeddings}
Let $\Scal$ be a global family of tropicalizations such that
\begin{align} \label{Payne's theorem}
\Xan = \varprojlim_{\varphi \in \Scal_{\map}} \Trop_{\varphi}(X)
\end{align}
and such that if $\varphi_1 \colon X \to Y_{\Sigma_1}$ and 
$\varphi_2 \colon X \to Y_{\Sigma_2}$ in $\Scal_{\map}$
then also $\varphi_1 \times \varphi_2 \colon X \to Y_{\Sigma_1} \times Y_{\Sigma_2} \in \Scal_{\map}$. 
Then $\Scal$ is fine enough. 
In fact it follows directly from (\ref{Payne's theorem})
that $\Scal$-tropical charts form a basis of the topology 
and from the product property that we can always locally find common subcharts. 
\end{Ex}

Families with the properties from Example \ref{ex systems of toric embeddings} 
were extensively studied in \cite{FGP}. 

When $X$ satisfies condition $(\dagger)$, this helps us say even more:

\begin{Ex}
Assume that $X$ satisfies condition $(\dagger)$. 
Then we may consider the family of tropicalizations $\tor_{\glob}$, 
where $\TS_{\glob, \map}$ is the class of
all embeddings of $X$ into toric varieties.

Let $\varphi \colon U \to \A^n$ be a closed embedding of an open subset $U$ of $X$ into an affine space
given by regular functions $f_1,\dots,f_n$ on $U$. 
Then by \cite[Theorem 4.2]{FGP} there exists an embedding $\overline{\varphi} \colon X \to Y_\Sigma$ 
such that $U$ is the preimage of an open affine invariant subvariety $U_{\sigma}$ 
and each $f_i$ is the pullback of a character that is regular on $U_\sigma$. 
This exactly shows that $\overline{\varphi}|_{U}$ is a refinement of $\varphi$ in $\tor$, 
which shows that $\tor_{\glob}$ is cofinal in $\A$. 
\end{Ex}

\begin{Ex}
Let $X$ be an affine variety and $\A_{\glob}$ the family of tropicalizations
whose class of embeddings are embeddings of all of $X$ into affine spaces. 

We show that this family is cofinal in $\A$: 
Let $U$ be an open subset of $X$ and $\varphi \colon U \to \A^n$ an closed embedding 
given by $f_1/g_1,\dots,f_n/g_n$, where $f_i, g_i$ are regular functions on $X$. 
In particular, $U = D(g_1,\dots,g_n)$. 
We pick regular functions $h_1,\dots,h_k$ on $X$ such 
that the $f_i, g_i, h_i$ generate $\Ocal(X)$. 
We consider the embedding $\varphi' \colon X \to \A^{2n} \times \A^{k}$ given by 
$f_1,\dots,f_n,g_1,\dots,g_n, h_1,\dots,h_k$.
Then $\varphi'|_{U}$ gives a closed embedding of $U$ into $\A^n \times \G_m^n \times \A^k$, 
which is clearly a refinement of $\varphi$ in $\tor$. 
\end{Ex}

\begin{Ex} \label{example linear}
Assume that $K$ is algebraically closed.
Wanner and the author considered in \cite{JellWanner} the class of \emph{linear tropical charts} for $X = \A^1$. 
In the language of the present paper, they use the following family of tropicalizations, 
which we denote by $\A_{\lin}$. 
$\A_{\lin, \map}$ is the set of linear embeddings  
i.e.~those embeddings $\varphi \colon \A^1 \to \A^r$, 
where the corresponding algebra homomorphism $K[T_1,\dots,T_r] \to K[X]$ 
is given by mapping $T_i$ to $(X-a_i)$ for $a_1,\dots,a_r \in K$. 
A refinement of $\varphi$ is then a map $\varphi' \colon \A^1 \to \A^s$ given by $(X-b_1,\dots,X-b_s)$ 
where $s > r$ and $\{a_i\} \subset \{b_j\}$. 
This family of tropicalizations is fine enough and global, for details see \cite[Section 3.2]{JellWanner}. 
By factoring the defining polynomials of any map $\A^1 \to \A^n$ into linear factors, 
it follows that $\A_{\lin}$ is cofinal in $\A_{\glob}$. 
\end{Ex}

\begin{Ex} \label{ex smooth}
Let $K$ be algebraically closed and $X$ be a smooth projective Mumford curve. 
Let $\tor_{\Smooth}$ be the class of embeddings of $X$ into toric varieties such that $\Trop_{\varphi}(X)$
is a smooth tropical curve. 
Then $\tor_{\Smooth}$ is cofinal in $\tor_{\glob}$ by \cite[Theorem A]{JellSmooth}. 
\end{Ex}

\section{Differential forms on Berkovich spaces} \label{sect superforms}

In this section, $K$ is a complete non-archimedean field
and $X$ is a variety over $K$. 
Further $\Scal$ is a fine enough family of tropicalizations.

\subsection{Sheaves of differential forms}

In this section, we define a sheaf of differential forms $\AS^{p,q}_{\Scal}$ with respect to $\Scal$. 
We will also show that for final and cofinal families, 
these sheaves are isomorphic. 
We use the sheaves $\AS^{p,q}$  of differential forms on tropical varieties 
which are recalled in Section \ref{sect tropical cohomology}. 

\begin{defn}
Let $\Scal$ be a fine enough family of tropicalizations. 
For $V$ an open subset of $\Xan$.
An element $\alpha \in \AS^{p,q}_{\Scal}(V)$ is given by a family of triples $(V_i, \varphi_i, \alpha_i)_{i \in I}$, 
where
\begin{enumerate}
\item
The $V_i$ cover $V$, i.e. $V = \bigcup_{i \in I} V_i$. 
\item
For each $i \in I$ the pair $(V_i, \varphi_i)$ is an $\Scal$-tropical chart.
\item
For each $i \in I$ we have $\alpha_i \in \AS^{p,q}(\trop_{\varphi}(V))$. 
\item
For all $i, j \in I$ there exist $\Scal$-tropical subcharts 
$(V_{ijl}, \varphi_{ijl})_{l \in L}$ that cover $V_i \cap V_j$ such that
\begin{align*}
\Trop(\varphi_i, \varphi_{ijl})^* \alpha_i = 
\Trop(\varphi_j, \varphi_{ijl})^* \alpha_j \in \AS^{p,q}(\trop_{\varphi_{ijl}}(V_{ijl})).
\end{align*} 
\end{enumerate}
Another such family $(V_j, \varphi_j, \alpha_j)_{j \in J}$ defines the same form $\alpha$ 
if their union $(V_i, \varphi_i, \alpha_i)_{i \in I \cup J}$
still satisfies iv). 

For an open subset $W$ of $V$ we can cover $W$ by $\Scal$-tropical subcharts 
$(V_{ij}, \varphi_{ij})$ of the $(V_i, \varphi_i)$. 
Then we define $\alpha\vert_W \in \AS^{p,q}_{\Scal}(W)$ to be defined by 
$(V_{ij}, \varphi_{ij}, \Trop(\varphi_{i}, \varphi_{ij})^*(\alpha_i))$. 
\end{defn}

The differentials $d'$ and $d''$ are well defined on $\AS^{p,q}_{\Scal}$ and thus we obtain
a complex $(\AS^{\bullet, \bullet}_{\Scal}, d', d'')$ of differential forms on $\Xan$. 

\begin{lem} \label{lem check one chart} \label{Lem check zero locally 2}
Let $\Scal$ be a fine enough family of tropicalization.
Let $\alpha \in \AS^{p,q}_{\Scal}(V)$ be given by a single $\Scal$-tropical chart $(V, \varphi, \alpha')$. 
Then $\alpha = 0$ if and only if $\alpha' = 0$. 
\end{lem}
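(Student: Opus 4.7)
The ``if'' direction is immediate from the definition of the zero form. For the converse, I would first unpack the hypothesis $\alpha=0$ via condition (iv) in the definition of $\AS^{p,q}_{\Scal}(V)$: the zero element admits the tautological representation $(V,\varphi,0)$ using the very same chart $\varphi$, so comparing this to $(V,\varphi,\alpha')$ furnishes $\Scal$-tropical subcharts $(V_i,\varphi_i)_{i\in I}$ of $(V,\varphi)$ with $V=\bigcup_i V_i$ and
\[
\Trop(\varphi,\varphi_i)^{*}\alpha' \;=\; 0 \;\in\; \AS^{p,q}(\Omega_i),\qquad \Omega_i := \trop_{\varphi_i}(V_i).
\]
Writing $\Omega := \trop_{\varphi}(V)$, so that $\alpha' \in \AS^{p,q}(\Omega)$, the task reduces to verifying that $\alpha'$ vanishes at every $y\in\Omega$.

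The main step is pointwise. For $y\in\Omega$, I would pick $x\in V$ with $\trop_{\varphi}(x)=y$, an index $i$ with $x\in V_i$, and set $z:=\trop_{\varphi_i}(x)\in\Omega_i$, so that $\Trop(\varphi,\varphi_i)(z)=y$. The vanishing of $\Trop(\varphi,\varphi_i)^{*}\alpha'$ at $z$ precisely expresses that $\alpha'(y)$ is annihilated by the pullback on multi-covectors induced by the linear part $L$ of $\Trop(\varphi,\varphi_i)$. Because $\varphi_i$ is a refinement of $\varphi$ via a closed embedding of $U_i\subseteq U$ together with an affine map of toric varieties $\psi_{\varphi,\varphi_i}$, the map $\Trop(\varphi,\varphi_i)$ restricts to a dimension-preserving surjection $\Trop_{\varphi_i}(U_i)\twoheadrightarrow \trop_{\varphi}(U_i^{\an})$. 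This forces $L$ to be surjective on the relevant multi-tangent spaces $\Fbb^{\R}_{\bullet}(z)\twoheadrightarrow\Fbb^{\R}_{\bullet}(y)$, so the induced map on multi-covectors is injective and $\alpha'(y)=0$ follows. Varying $y$ yields $\alpha'=0$.

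The principal obstacle, which deserves careful treatment in a full writeup, is justifying the claim that the linear part $L$ of $\Trop(\varphi,\varphi_i)$ is surjective on multi-tangent spaces at every point of the refined tropical subvariety. Once this tangential surjectivity is established — directly, or by verifying it at generic points of each stratum and invoking smoothness of the coefficients of $\alpha'$ to extend the vanishing to boundary strata — the remainder of the argument is linear algebra and unpacking of the equality criterion in $\AS^{p,q}_{\Scal}(V)$.
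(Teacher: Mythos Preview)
Your argument is correct and is precisely the structure of \cite[Lemme 3.2.2]{CLD}, to which the paper's proof defers verbatim. The tangential surjectivity you flag as the crux is exactly the content CLD supply (via the invariance of the local tropical dimension under refinement), so your write-up is in fact already more explicit than the paper's one-line citation.
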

\begin{proof}
This works the same as the proof in \cite[Lemme 3.2.2]{CLD}. 
\end{proof}

\begin{lem} \label{lem admissible}
Let $\Scal$ be a fine enough family of tropicalization. 
Let $\Scal'$ be a fine enough subfamily. 
Then there exists a unique morphism of sheaves
\begin{align*}
\Psi_{\Scal', \Scal} \colon \AS_{\Scal'} \to \AS_{\Scal},
\end{align*}
such that the image of a form given by a triple $(V, \varphi, \beta)$ 
is given by the same triple.
This morphism is injective. 
Furthermore if $\Scal'$ is final or cofinal, then this morphism is an isomorphism. 
\end{lem}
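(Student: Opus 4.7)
The plan is to construct $\Psi_{\Scal', \Scal}$ in the only way consistent with the stipulation that it send a triple to itself. Since $\Scal'$ is a subfamily of $\Scal$, every $\Scal'$-tropical chart is also an $\Scal$-tropical chart, and every $\Scal'$-subchart refinement remains a refinement in $\Scal$. Therefore a representing family $(V_i, \varphi_i, \alpha_i)_{i \in I}$ of a section of $\AS_{\Scal'}(V)$ also represents a section of $\AS_{\Scal}(V)$, and two equivalent $\Scal'$-representations are \emph{a fortiori} equivalent as $\Scal$-representations. This defines $\Psi_{\Scal', \Scal}$ on sections; uniqueness follows since sections of $\AS_{\Scal'}$ are locally given by single triples, on which the prescription already determines $\Psi_{\Scal', \Scal}$.

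Injectivity will be immediate from Lemma \ref{lem check one chart}: if $\Psi_{\Scal', \Scal}(\alpha) = 0$ with $\alpha$ represented by $(V_i, \varphi_i, \alpha_i)_{i \in I}$, then on each $V_i$ the image $\Psi_{\Scal', \Scal}(\alpha)|_{V_i}$ is given by the single $\Scal$-tropical chart $(V_i, \varphi_i, \alpha_i)$, which forces $\alpha_i = 0$, and hence $\alpha = 0$.

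For surjectivity in the cofinal or final case I will proceed locally. Fix $\beta \in \AS_{\Scal}(V)$ represented by $(V_i, \varphi_i, \alpha_i)_{i \in I}$ and a point $x \in V_i$. Cofinality produces an $\Scal'$-embedding $\varphi'_x$ with $x \in U'^{\an}_x$ whose restriction to $U_i \cap U'_x$ refines $\varphi_i$ in $\tor$ via an affine toric map $\psi_x$; finality produces a refinement $\varphi'_i$ of $\varphi_i$ in $\Scal$ together with a closed toric embedding $m$ such that $m \circ \varphi'_i \in \Scal'_{\map}$. After shrinking one obtains an $\Scal'$-tropical chart $(V'_x, \varphi'_x)$ with $V'_x \subset V_i$ on which $\trop_{\varphi_i}$ factors through $\trop_{\varphi'_x}$, either via $\Trop(\psi_x)$ or, in the final case, via $\Trop(\psi_{\varphi_i, \varphi'_i})$ composed with the homeomorphism $\Trop(m)$ onto its image. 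I will then define $\alpha'_x$ on an open subset of $\Trop_{\varphi'_x}(U'_x)$ as the corresponding pullback of $\alpha_i$. The family $\{(V'_x, \varphi'_x, \alpha'_x)\}_{x \in V}$ will cover $V$ and tautologically produces $\beta$ after applying $\Psi_{\Scal', \Scal}$.

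The main obstacle will be showing that this family is a bona fide $\Scal'$-section, i.e., that it satisfies condition iv) in $\Scal'$: for each pair $x, y$ one needs common $\Scal'$-tropical subcharts of $(V'_x, \varphi'_x)$ and $(V'_y, \varphi'_y)$ covering $V'_x \cap V'_y$ on which the pullbacks agree. The geometric refinement $\varphi'_x|_{U_i \cap U'_x}$ produced by cofinality lies only in $\tor$ and not in $\Scal'$, so it cannot directly serve as such a common subchart; the analogous issue arises in the final case. I will invoke the fine-enough hypothesis for $\Scal'$ to extract common $\Scal'$-subcharts on intersections, and reduce the equality of the two pullbacks on each such subchart to the $\Scal$-compatibility already satisfied by $\beta$, applying Lemma \ref{lem check one chart} to transfer this equality to the $\Scal'$ level.
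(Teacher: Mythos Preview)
Your construction of $\Psi_{\Scal',\Scal}$ and the injectivity argument are exactly as in the paper. For surjectivity your local construction of the $\Scal'$-charts $(V'_x,\varphi'_x,\alpha'_x)$ is also the same, but the paper avoids your ``main obstacle'' entirely by working stalkwise and routing through $\tor$. Concretely, the paper does not try to assemble the $(V'_x,\varphi'_x,\alpha'_x)$ into a global $\Scal'$-section and verify condition~iv); instead it fixes $x$, takes the single triple $\alpha' := (V'_x,\varphi'_x,\alpha'_x)$ as a germ in $\AS_{\Scal'}$, and observes the chain of equalities
\[
\Psi_{\Scal,\tor}(\alpha) \;=\; \Psi_{\Scal',\tor}(\alpha') \;=\; \Psi_{\Scal,\tor}\bigl(\Psi_{\Scal',\Scal}(\alpha')\bigr),
\]
where the first equality holds because the $\tor$-refinement relating $\varphi'_x$ to $\varphi_i$ makes the two triples represent the same $\tor$-form. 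Injectivity of $\Psi_{\Scal,\tor}$ (the first part of the lemma, already proved) then yields $\Psi_{\Scal',\Scal}(\alpha')=\alpha$ near $x$, and stalkwise surjectivity is enough for a morphism of sheaves. Your plan to verify iv) directly does work, but note that the compatibility you need ultimately lives in $\tor$ (the relation between $\varphi'_x$ and $\varphi_i$ is only a $\tor$-refinement), so Lemma~\ref{lem check one chart} must be invoked in $\tor$ rather than in $\Scal'$; once you unwind that, you are effectively reproving the injectivity of $\Psi_{\Scal',\tor}$ on each overlap. The paper's factorization packages this once and for all.
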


Recall that if $\Scal'$ is a full subfamily that is either final or cofinal in
the fine enough family $\Scal$, then $\Scal'$ is itself fine enough
by Lemma \ref{lem fine enough}.
Thus Lemma \ref{lem admissible} implies that $\Psi_{\Scal', \Scal}$
is an isomorphism. 

\begin{proof}
Injectivity follows from Lemma \ref{lem check one chart}.  

Assume that $\Scal'$ is cofinal in $\Scal$ and that we are given a form $\alpha$ given by $(V, \varphi, \beta)$. 
Fixing $x\in V$ and picking $\varphi'$ as in Definition \ref{defn final}, we define a form $\alpha'$ to be given by 
$(V \cap U'^{\an}, \varphi', \Trop(\psi_{\varphi', \varphi})^* \beta)$. 
Since $\varphi'$ was a refinement of $\varphi$ in $\tor$, 
we have 
\begin{align*}
\Psi_{\Scal, \tor}(\alpha) = \Psi_{\Scal', \tor}(\alpha') = \Psi_{\Scal, \tor}( \Psi_{\Scal', \Scal} (\alpha') )
\end{align*}
which proves $\Psi(\Scal', \Scal) (\alpha')  = \alpha$ locally at $x$ by injectivity of $\Psi_{\Scal, \tor}$.

Assume that $\Scal'$ is final in $\Scal$ and we are given a form $\alpha$ given by a tuple $(V, \varphi, \beta)$.
We pick $m$ as in Definition \ref{defn final} and define $\alpha'$ to be given by $(\varphi \circ m_2, V, \beta)$. 
Note here that $m$ induces an isomorphism of the tropicalizations of $V$, thus ``pushing $\beta$ forward 
along $\varphi$'' is possible. 
Then $\Psi_{\Scal', \Scal}(\alpha') = \alpha$ since $\varphi$ is a refinement of $\varphi \circ m$ via $m$. 
\end{proof}

%

\begin{defn}
Let $\Scal$ be a fine enough family of tropicalizations. 
We say that $\Scal$ is \emph{admissible} if $\Psi_{S, \tor}$ is an isomorphism.
\end{defn}

\begin{kor}
We have an isomorphism
\begin{align*}
\AS_{\A} \cong \AS_{\tor}.
\end{align*}
If $X$ is affine, these are also isomorphic to $\AS_{\A_{\glob}}$ and 
if $X$ satisfies condition $(\dagger)$, then these are also isomorphic to 
$\AS_{\tor_{\glob}}$.
\end{kor}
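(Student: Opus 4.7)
The plan is to apply Lemma \ref{lem admissible} three times, once for each isomorphism, relying on the cofinality and finality statements already established in the examples of Section \ref{families of trop}. In each case the relevant subfamily uses the same notion of refinement as the ambient family (namely, existence of an affine map of toric varieties making the compatibility diagram in Definition \ref{defn family} commute), so the ``full subfamily'' hypothesis is automatic and will not be commented on further.

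For the comparison $\AS_{\A} \cong \AS_{\tor}$, I would start by noting that $\tor$ is fine enough: $\tor$-tropical charts form a basis for the topology of $\Xan$, and intersections of charts admit common refinements by taking products of toric varieties. The example following the definition of $\A$ shows that $\A$ is final in $\tor$ (one refines a given closed embedding $\varphi \colon U \to Y_\Sigma$ by restricting to the preimage of an affine open torus-invariant subvariety $Y_\sigma$ containing the image of $x$, and then composes with a toric closed embedding $Y_\sigma \to \A^n$ from Example \ref{trop of affine embedding}). By Lemma \ref{lem fine enough}, $\A$ is fine enough, and by Lemma \ref{lem admissible}, the canonical morphism $\Psi_{\A, \tor} \colon \AS_{\A} \to \AS_{\tor}$ is an isomorphism.

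For the case that $X$ is affine, the example introducing $\A_{\glob}$ verifies that $\A_{\glob}$ is cofinal in $\A$: given a closed embedding of a basic open $U = D(g_1,\dots,g_n)$ into $\A^n$, one adjoins the $g_i$ and additional generators of $\Ocal(X)$ to produce a global embedding whose restriction refines the original. Hence by Lemma \ref{lem fine enough} and Lemma \ref{lem admissible}, $\Psi_{\A_{\glob}, \A}$ is an isomorphism, and composition with the previous case gives $\AS_{\A_{\glob}} \cong \AS_{\tor}$. Similarly, when $X$ satisfies condition $(\dagger)$, the example involving $\tor_{\glob}$ shows that $\tor_{\glob}$ is cofinal in $\A$ (using \cite[Theorem 4.2]{FGP} to extend an affine chart to a global toric embedding), so another application of Lemma \ref{lem admissible} gives $\AS_{\tor_{\glob}} \cong \AS_{\A} \cong \AS_{\tor}$.

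There is no real obstacle here, as the corollary is essentially a repackaging of the finality and cofinality statements proved in the examples of Section \ref{families of trop}, combined with Lemma \ref{lem admissible}. The only thing worth double-checking is the ``full subfamily'' condition in each application, which as noted above reduces to the observation that the refinement relation in $\A$, $\A_{\glob}$, and $\tor_{\glob}$ is inherited from $\tor$ without restriction.
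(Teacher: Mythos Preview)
Your approach is exactly the one the paper intends: the corollary is stated without proof and is meant to follow from Lemma~\ref{lem admissible} together with the finality and cofinality statements in the examples of Section~\ref{families of trop}. The first two applications ($\A$ final in $\tor$, and $\A_{\glob}$ cofinal in $\A$) go through as you describe.

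There is one technical slip in the $\tor_{\glob}$ case. You invoke Lemma~\ref{lem admissible} for ``$\tor_{\glob}$ cofinal in $\A$'', but that lemma requires $\Scal'$ to be a \emph{subfamily} of $\Scal$, and $\tor_{\glob}$ is not a subfamily of $\A$: its embeddings land in arbitrary toric varieties, not only in affine spaces. So your blanket remark that the (full) subfamily hypothesis is automatic does not hold here, and there is no map $\Psi_{\tor_{\glob}, \A}$ to speak of. The repair is straightforward. Note that $\tor_{\glob}$ is a full subfamily of $\tor$, so $\Psi_{\tor_{\glob}, \tor}$ exists and is injective. The cofinal half of the proof of Lemma~\ref{lem admissible} never actually uses that $\Scal'$ sits inside $\Scal$; it only produces, for any $\alpha \in \AS_{\Scal}$, an $\alpha' \in \AS_{\Scal'}$ with $\Psi_{\Scal, \tor}(\alpha) = \Psi_{\Scal', \tor}(\alpha')$. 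Applying this with $\Scal = \A$ and $\Scal' = \tor_{\glob}$, and using that $\Psi_{\A, \tor}$ is already surjective from your first step, you conclude that $\Psi_{\tor_{\glob}, \tor}$ is surjective, hence an isomorphism. (Fine-enoughness of $\tor_{\glob}$ itself comes from Example~\ref{ex systems of toric embeddings} via Payne's theorem, not from Lemma~\ref{lem fine enough} applied inside $\A$.)
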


\begin{satz} \label{thm one chart}
Let $\Scal$ be a fine enough global family of tropicalizations. 
Let $\alpha \in \AS^{p,q}(V)$ be given by a \emph{finite} family 
$(V_i, \varphi_i, \alpha_i)$, where $\varphi_i \colon X \to Y_{\Sigma_i}$ in $S_{\map}$.  
Then $\alpha$ can be defined by one $\Scal$-tropical chart. 
\end{satz}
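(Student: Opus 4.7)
The plan is to combine the finitely many charts into a single one by passing to the product of the embeddings. First, I would set
$\Phi := \varphi_1 \times \dots \times \varphi_n \colon X \to Y_\Sigma := Y_{\Sigma_1} \times \dots \times Y_{\Sigma_n}$, which is a global closed embedding of $X$ into a toric variety. For this product to yield an honest $\Scal$-tropical chart we need $\Phi \in \Scal_{\map}$; this is automatic for all fine enough global families that appear in the paper (for instance $\tor_{\glob}$ as well as the systems of toric embeddings of Example \ref{ex systems of toric embeddings}), and I would proceed under this assumption. For each $i$, the toric projection $\pi_i \colon Y_\Sigma \to Y_{\Sigma_i}$ is a morphism of toric varieties with $\pi_i \circ \Phi = \varphi_i$, so $\Phi$ is a refinement of $\varphi_i$ in the sense of Definition \ref{defn family} with $\psi_{\varphi_i,\Phi} = \pi_i$. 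Write $p_i := \Trop(\varphi_i,\Phi) \colon \Trop_\Phi(X) \to \Trop_{\varphi_i}(X)$ for the induced affine map of tropical toric varieties, so that $\trop_{\varphi_i} = p_i \circ \trop_\Phi$.

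Next, I would translate the given data into the language of the single chart $\Phi$. Writing $V_i = \trop_{\varphi_i}^{-1}(\Omega_i)$, one has $V_i = \trop_\Phi^{-1}(\Omega_i')$ with $\Omega_i' := p_i^{-1}(\Omega_i)$, and I set $\beta_i := p_i^* \alpha_i \in \AS^{p,q}(\Omega_i')$. The triple $(V_i,\Phi,\beta_i)$ represents the same form on $V_i$ as $(V_i,\varphi_i,\alpha_i)$: condition iv) holds trivially with $(V_i,\Phi)$ itself as the (single) common subchart, since $p_i^* \alpha_i = \beta_i$ by definition.

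The gluing step is then essentially forced. On any overlap $V_i \cap V_j$ both $(V_i\cap V_j,\Phi,\beta_i|_{\Omega_i'\cap\Omega_j'})$ and $(V_i\cap V_j,\Phi,\beta_j|_{\Omega_i'\cap\Omega_j'})$ represent the same restriction $\alpha|_{V_i\cap V_j}$ in $\AS^{p,q}_\Scal(V_i\cap V_j)$. By Lemma \ref{lem check one chart}, two presentations built from a common single chart determine equal forms only if their superform components coincide, hence $\beta_i$ and $\beta_j$ agree on $\Omega_i'\cap\Omega_j'$. The $\beta_i$ therefore glue to a superform $\beta \in \AS^{p,q}\bigl(\bigcup_i \Omega_i'\bigr)$, and $(V,\Phi,\beta)$ is the desired single $\Scal$-tropical chart representing $\alpha$.

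The only genuine obstacle is confirming that the product $\Phi$ lies in $\Scal_{\map}$; once this is granted, the rest is a short diagram chase combined with the injectivity statement of Lemma \ref{lem check one chart}. The finiteness hypothesis is indispensable at exactly this point, since an infinite product of the $Y_{\Sigma_i}$ would fail to be a toric variety of finite type and could not serve as the target of a single closed embedding of $X$.
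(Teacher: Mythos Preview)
Your overall strategy---pass to a single common refinement of the $\varphi_i$, pull each $\alpha_i$ back along the corresponding transition map, invoke Lemma \ref{lem check one chart} on overlaps, then glue---is exactly the argument the paper gives. The gluing step and the use of Lemma \ref{lem check one chart} are carried out correctly.

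The one genuine gap is in how you obtain the common refinement. You take the product $\Phi = \varphi_1 \times \dots \times \varphi_n$ and then explicitly \emph{assume} that $\Phi \in \Scal_{\map}$. But the theorem is stated for an arbitrary fine enough global family $\Scal$, and closure under products is not part of that hypothesis. The paper avoids this by observing that ``fine enough'' together with ``global'' already forces the existence of a common refinement in $\Scal_{\map}$: apply the fine-enough condition to the two charts $(\Xan,\varphi_1)$ and $(\Xan,\varphi_2)$ to get common $\Scal$-tropical subcharts $(V_l,\varphi_l)$ covering $\Xan$; since $\Scal$ is global, every $\varphi_l$ is already defined on all of $X$, so any single one of them is a refinement of both $\varphi_1$ and $\varphi_2$. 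Iterating over the finitely many $\varphi_i$ yields a common refinement $\varphi \in \Scal_{\map}$ without ever forming a product. Once you replace your product $\Phi$ by this abstractly obtained $\varphi$, the rest of your proof goes through verbatim and matches the paper's.
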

\begin{proof}
Since $\Scal$ is fine enough and global, there exists a common refinement
$\varphi \colon X \to Y_{\Sigma}$ for all the $\varphi_i$. 
Then $(V_i, \varphi)$ is an $\Scal$-tropical subchart of $(V_i, \varphi_i)$ for all $i$.  
Denote by $\alpha_i' := \Trop(\varphi_i,\varphi)^* \alpha_i$ and $\Omega_i := \trop_{\varphi}(V_i)$. 
Then $\alpha|_{V_i}$ is given by both $(V_i \cap V_j, \varphi, \alpha'_i|_{\Omega_i \cap \Omega_j})$ and 
$(V'_i \cap V'_j, \varphi, \alpha'_j|_{\Omega_i \cap \Omega_j})$. 
By Lemma \ref{Lem check zero locally 2}, 
the forms $\alpha'_i$ and $\alpha'_j$ agree on $\Omega_i \cap \Omega_j$, thus glue to give a form 
$\alpha' \in \AS^{p,q}(\trop_{\varphi}(V))$. 
The form $\alpha \in \AS^{p,q}(V)$ is then defined by $(V, \varphi, \alpha')$. 
\end{proof}

Let $\Scal$ be a global admissible family of tropicalizations. 
Let $\varphi \colon X \to Y_\Sigma$ be a closed embedding in $\Scal_{\map}$. 
We define a map $\trop^* \colon \AS^{p,q}(\Trop_{\varphi}(X)) \to \AS_{\Scal}^{p,q}(\Xan)$
by setting for $\beta \in \AS^{p,q}(\Trop_{\varphi}(X))$ the image
$\trop^*\beta$ to be the form given by the triple $(\varphi, \Xan, \beta)$.
One immediately checks that this is well defined. 
We define this similarly for forms with compact support. 

\begin{satz} \label{thm forms are limit}
Let $\Scal$ be a fine enough global family of tropicalizations. 
Let $V$ be an open subset of $\Xan$ such that there exists an $\Scal$-tropical chart $(V, \varphi)$. 
Then pullbacks along the tropicalization maps induce an isomorphism
\begin{align*}
\varinjlim \AS^{p,q}_c(\trop_{\varphi}(V)) \to \AS^{p,q}_{\Scal, c}(V)  
\end{align*}
where the limit runs over all $\Scal$-tropical charts $(V, \varphi)$. 
\end{satz}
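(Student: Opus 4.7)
The plan is to prove the natural map, which sends $\alpha_0 \in \AS^{p,q}_c(\trop_{\varphi}(V))$ to the form on $V$ defined by the single $\Scal$-tropical chart $(V, \varphi, \alpha_0)$, is both injective and surjective. For injectivity, if $\alpha_0$ has zero image, then the form defined by the single chart $(V, \varphi, \alpha_0)$ is zero and Lemma \ref{lem check one chart} forces $\alpha_0 = 0$. To deduce injectivity of the colimit map, I will take two classes $\alpha_1 \in \AS^{p,q}_c(\trop_{\varphi_1}(V))$ and $\alpha_2 \in \AS^{p,q}_c(\trop_{\varphi_2}(V))$ with the same image in $\AS^{p,q}_{\Scal, c}(V)$, pass to a common refinement $\varphi' \in \Scal_{\map}$ of $\varphi_1$ and $\varphi_2$ (which exists because $\Scal$ is fine enough and global), and apply the single-chart uniqueness of Lemma \ref{lem check one chart} to the difference $\Trop(\varphi_1, \varphi')^*\alpha_1 - \Trop(\varphi_2, \varphi')^*\alpha_2$.

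For surjectivity, let $\alpha \in \AS^{p,q}_{\Scal, c}(V)$ with compact support $K$, defined by a family $(V_i, \varphi_i, \alpha_i)_{i \in I}$. I will proceed in three steps. First, by compactness of $K$, extract a finite subcover $V_{i_1}, \dots, V_{i_n}$ of $K$ and set $V' := \bigcup_k V_{i_k}$. Second, apply Theorem \ref{thm one chart} to the finite family $(V_{i_k}, \varphi_{i_k}, \alpha_{i_k})_{k=1}^n$ representing $\alpha|_{V'}$, arranging the common refinement $\varphi'$ to simultaneously refine $\varphi$ and all the $\varphi_{i_k}$, so that $\alpha|_{V'}$ is given by a single chart $(V', \varphi', \alpha')$. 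Third, pass to a further refinement $\varphi'' \in \Scal_{\map}$ of $\varphi'$ such that the pullback of $\alpha'$ to $\trop_{\varphi''}(V)$ can be extended by zero to a compactly supported form whose pullback equals $\alpha$ on all of $V$.

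The main obstacle is Step three. Since $\alpha$ vanishes on $V \setminus K$, one would like the tropical form $\alpha'$ to be supported inside $\trop_{\varphi'}(K)$, but $\trop_{\varphi'}$ may identify points of $K$ with points of $V \setminus K$, so extending by zero is a priori incompatible with the value of $\alpha$ on $V \setminus V'$. My plan is to exploit Payne's theorem that $\Xan = \varprojlim_{\varphi} \Trop_{\varphi}(X)$: the disjoint sets $K$ (compact) and $\Xan \setminus V$ (closed) are separated in the inverse limit, so finitely many tropicalizations separate them, and by combining them with $\varphi'$ using the global and fine-enough hypotheses on $\Scal$ I obtain a $\varphi''$ for which $\trop_{\varphi''}(K)$ is contained in a relatively compact open subset $U$ of $\trop_{\varphi''}(V)$ whose preimage in $V$ does not meet $\Xan \setminus V$. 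After this separation, the pullback of $\alpha'$ along $\Trop(\varphi', \varphi'')$ is supported in $\overline{U}$, and extending it by zero on $\trop_{\varphi''}(V)$ gives the required element of $\AS^{p,q}_c(\trop_{\varphi''}(V))$ mapping to $\alpha$.
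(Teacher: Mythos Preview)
Your injectivity argument and Steps~1--2 of surjectivity match the paper's proof. The divergence is in Step~3, where you introduce an unnecessary detour.

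The obstacle you identify --- that $\trop_{\varphi'}$ may send points of $V\setminus V'$ into $\supp(\alpha')$, making extension by zero inconsistent with $\alpha$ --- does not actually occur. The point you are missing is that $(V',\varphi')$ is itself an $\Scal$-tropical chart: each $(V_{i_k},\varphi')$ is one (coming out of the proof of Theorem~\ref{thm one chart}), so $V' = \bigcup_k V_{i_k} = \trop_{\varphi'}^{-1}\bigl(\bigcup_k \trop_{\varphi'}(V_{i_k})\bigr)$ is the full $\trop_{\varphi'}$-preimage of its image. Hence any $x\in V\setminus V'$ has $\trop_{\varphi'}(x)\notin \trop_{\varphi'}(V')\supset \supp(\alpha')$, and the zero extension $\tilde\alpha'$ automatically vanishes there. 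The paper combines this with the support identity $\supp(\alpha')=\trop_{\varphi'}(\supp(\alpha))$ (a direct consequence of Lemma~\ref{Lem check zero locally 2}, cf.\ \cite[Corollaire~3.2.3]{CLD}) to conclude that $\supp(\alpha')$ is compact and that $(V,\varphi',\tilde\alpha')$ defines $\alpha$. No further refinement $\varphi''$ is needed.

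Your route via Payne's theorem can be made to work, but two points deserve care. First, Payne's theorem concerns \emph{all} toric embeddings, whereas you need separation within $\Scal$; this is available directly from the ``fine enough'' hypothesis (the $\Scal$-charts form a basis), so invoking Payne is misleading. Second, your claim that the pullback $\Trop(\varphi',\varphi'')^*\alpha'$ is supported in $\overline U$ is not literally the preimage statement: the naive support of a pullback is $\Trop(\varphi',\varphi'')^{-1}(\supp\alpha')$, which can be strictly larger than $\trop_{\varphi''}(K)$. To get the support inside $\trop_{\varphi''}(K)$ you must apply the same support identity from Lemma~\ref{Lem check zero locally 2} at the level of $\varphi''$ --- but once you use that identity, it already applies at the level of $\varphi'$, making the passage to $\varphi''$ redundant.
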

\begin{proof}
For any $\Scal$-tropical chart $(V,\varphi)$, 
the pullback along the proper map $\trop_{\varphi}$ induces a well defined morphism 
$\AS^{p,q}_c(\trop_{\varphi}(V)) \to \AS^{p,q}_{\Scal, c}(V)$.
By definition this map is compatible with pullback between charts. 
Thus the universal property of the direct limit leads to a morphism 
$\Psi\colon\varinjlim \AS^{p,q}_{\Scal,c}(\trop_{\varphi}(V))\to \AS^{p,q}_c(V)$,
where the limit runs over all $\Scal$-tropical charts of $V$. 

This map is injective by construction. 
For surjectivity, let $\alpha \in \AS^{p,q}_c(V)$ be given by $(V_i, \varphi_i, \alpha_i)_{i \in I}$. 
Let $I'$ be a finite subset of $I$ such that the $V_i$ with $i \in I'$ cover the support of $\alpha$. 
Then $(V_i, \varphi_i, \alpha_i)_{i \in I'}$ defines $\alpha|_{V'} \in \AS^{p,q}_{\Scal,c}(V')$, 
where $V' = \bigcup_{i \in I'}V_i$. 
Since $\Scal$ is global and fine enough, the conditions of Theorem \ref{thm one chart} are satisfied and 
$\alpha\vert_{V'}$ can be defined by a triple $(V', \varphi', \alpha')$.
By passing to a common refinement with $(V, \varphi)$ 
we may assume that $(V', \varphi')$ is a subchart of $(V, \varphi)$. 

It follows from Lemma \ref{Lem check zero locally 2} that 
$\supp(\alpha') = \varphi'_{\trop}(\supp(\alpha))$ (cf.~\cite[Corollaire 3.2.3]{CLD}). 
Thus $\supp(\alpha')$ is compact.
We extend $\alpha'$ by zero to $\tilde{\alpha}'  \in \AS^{p,q}_{\Trop_{\varphi'}(X),c}(\trop_{\varphi'}(V))$.
Then $\alpha$ is defined by $(V, \varphi', \tilde{\alpha}')$, which is in the image of $\Psi$. 
\end{proof}

\subsection{Tropical Dolbeault cohomology}

In this section we assume that $\Scal$ is a admissible 
family of tropicalizations and often just write $\AS^{p,q}$ for $\AS^{p,q}_{\Scal}$. 

\begin{defn}
We define \emph{tropical Dolbeault cohomology} to be the cohomology
of the complex $(\AS^{p,\bullet}(\Xan), d'')$, i.e.
\begin{align*}
\HH^{p,q}(\Xan) := 
\frac {\ker (d'' \colon \AS^{p,q}(\Xan) \to \AS^{p,q+1}(\Xan) } 
{\im (d'' \colon \AS^{p,q-1}(\Xan) \to \AS^{p,q}(\Xan))}.
\end{align*}
Similarly we define cohomology with compact support $\HH^{p,q}_c(\Xan)$
as the cohomology of forms with compact support. 
\end{defn}

\begin{satz} \label{thm cohomology limit}
Let $\Scal$ be an admissible global family of tropicalizations.  
Then pullbacks along tropicalization maps induce an isomorphism
\begin{align*}
\varinjlim_{\varphi \in \Scal} \HH_c^{p,q}(\Trop_\varphi(X)) \to \HH^{p,q}_c(\Xan) .
\end{align*}
\end{satz}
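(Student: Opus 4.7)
The plan is to deduce this from Theorem \ref{thm forms are limit} applied with $V = \Xan$, by showing that the comparison isomorphism on forms descends to cohomology via the exactness of filtered colimits.

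First I would observe that because $\Scal$ is global, every embedding $\varphi \in \Scal_{\map}$ is defined on all of $X$, so $(\Xan, \varphi)$ is an $\Scal$-tropical chart. Hence, taking $V = \Xan$ in Theorem \ref{thm forms are limit}, the indexing set of the direct limit becomes precisely the class of embeddings $\varphi \in \Scal_{\map}$ used in our theorem, and we obtain for each $p,q$ a canonical isomorphism
\begin{align*}
\varinjlim_{\varphi \in \Scal_{\map}} \AS^{p,q}_c(\Trop_\varphi(X)) \xrightarrow{\;\sim\;} \AS^{p,q}_{\Scal, c}(\Xan).
\end{align*}
Because $\Scal$ is admissible, $\AS^{p,q}_{\Scal,c}(\Xan) = \AS^{p,q}_c(\Xan)$ by definition.

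Next I would check that this isomorphism is compatible with $d''$. The pullback $\Trop(\psi_{\varphi,\varphi'})^*$ of superforms along affine maps of tropical toric varieties commutes with $d''$ by construction, so the transition maps in the direct system are maps of complexes $(\AS^{p,\bullet}_c(\Trop_\varphi(X)), d'')$. Similarly, the maps $\trop_\varphi^* \colon \AS^{p,q}_c(\Trop_\varphi(X)) \to \AS^{p,q}_c(\Xan)$ commute with $d''$. Hence the isomorphism above is an isomorphism of complexes in the variable $q$.

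Finally, I would invoke the exactness of filtered direct limits in the category of $\R$-vector spaces: taking cohomology commutes with $\varinjlim$, so
\begin{align*}
\varinjlim_{\varphi \in \Scal_{\map}} \HH^{p,q}_c(\Trop_\varphi(X))
&= \varinjlim_{\varphi} \HH^q\bigl(\AS^{p,\bullet}_c(\Trop_\varphi(X)), d''\bigr) \\
&= \HH^q\Bigl(\varinjlim_{\varphi} \AS^{p,\bullet}_c(\Trop_\varphi(X)), d''\Bigr)
= \HH^q\bigl(\AS^{p,\bullet}_c(\Xan), d''\bigr) = \HH^{p,q}_c(\Xan),
\end{align*}
which is the desired statement. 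The only nontrivial input is Theorem \ref{thm forms are limit}, whose compactness hypothesis was essential for the surjectivity argument there; with that in hand the present theorem is essentially a formality. I expect no real obstacle beyond correctly matching the two indexing systems of the direct limits, which is transparent once one uses globality of $\Scal$.
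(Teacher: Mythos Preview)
Your proposal is correct and follows exactly the same route as the paper's own proof, which simply says that the map is induced by $\trop^*$ and that the result follows from Theorem~\ref{thm forms are limit} together with the fact that cohomology commutes with direct limits. You have merely unpacked these two sentences: identifying the indexing set via globality, invoking admissibility to pass from $\AS^{p,q}_{\Scal,c}$ to $\AS^{p,q}_c$, checking compatibility with $d''$, and using exactness of filtered colimits---all of which the paper leaves implicit.
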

\begin{proof}
The map is induced by $\trop^*$. 
The theorem follows from the fact that taking cohomology commutes with forming direct limits and
Theorem \ref{thm forms are limit}. 
\end{proof}

\begin{bem} \label{bem non-compact non-true}
The corresponding statement for $\HH^{p,q}(\Xan)$ fails.
We sketch the argument. 
If $X$ is an affine variety, then we may pick $\Scal = \A_{\glob}$, the class
of closed embeddings into affine space. 

One can show that $\HH^{n,n}(Y) = 0$ for all tropical subvarieties of $\T^n = \Trop(\A^n)$. 
Thus we have $\varinjlim_{\varphi \in \Scal} \HH^{p,q}(\Trop_\varphi(X)) = 0$. 

Let $K$ be algebraically closed, $E$ be an elliptic curve of good reduction and let $e$ be a rational point of $E$. 
Let $V$ be an open neighborhood of $e$ that is isomorphic to an open annulus
and let $X  = E \setminus e$. 

Using the Mayer-Vietoris sequence for the cover $(\Xan, V)$ for $E^{\an}$ 
gives the following exact sequence 
\begin{align*}
\HH^{1,0}(V \setminus e) \to \HH^{1,1}(E^{\an}) \to \HH^{1,1}(\Xan) \oplus \HH^{1,1}(V). 
\end{align*}
It was shown in \cite[Theorem 5.7]{JellWanner} that both 
$\HH^{1,0}(V \setminus e)$ and $\HH^{1,1}(V)$ are finite dimensional. 
Further it was shown in \cite[Theorem B]{JellDuality} that when the residue field of $K$ is $\C$, 
then $\HH^{1,1}(E^{\an})$ is infinite dimensional. 
This implies that $\HH^{1,1}(\Xan)$ is infinite dimensional. 
In particular, it is not equal to $\varinjlim_{\varphi \in \Scal} \HH^{p,q}(\Trop_\varphi(X)) = 0$.
\end{bem}

\section{Comparison theorems} \label{sect comparison theorems}

\subsection{Comparing with Gubler's definition} 

Since Gubler's definition only works when $K$ is non-trivially valued and algebraically closed
we assume for this subsection that this is the case.

\begin{satz} \label{thm G admissible}
The family of tropicalizations $\G$ from Example \ref{defn G} is admissible. 
\end{satz}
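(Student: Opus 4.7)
The plan is to deduce the theorem from Lemma \ref{lem admissible}. Concretely, I will verify that $\G$ is a full subfamily of $\tor$ and that $\G$ is cofinal in $\tor$ in the sense of Definition \ref{defn final}. Combined with the fineness of $\G$ recorded in Example \ref{defn G}, Lemma \ref{lem admissible} then yields that $\Psi_{\G,\tor}$ is an isomorphism, i.e.\ that $\G$ is admissible.

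That $\G$ is a full subfamily of $\tor$ is immediate from unwinding the definitions. Every embedding in $\G_{\map}$ is a closed embedding of an open subset of $X$ into the toric variety $\G_m^n$, hence belongs to $\tor_{\map}$. Every $\G$-refinement is realised by an affine map of tori $\G_m^m \to \G_m^n$, which is a special case of an affine map of toric varieties; conversely, if two $\G$-embeddings $\varphi,\varphi'$ are related by a $\tor$-refinement, then the witnessing affine map of toric varieties $Y_\Sigma \to Y_{\Sigma'}$ must send the generic torus into the generic torus and hence is automatically an affine map of tori.

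The main step is cofinality. Given $\varphi \colon U \to Y_\Sigma$ in $\tor_{\map}$ and $x \in U^{\an}$, I would first reduce to the affine case: choose an affine torus-invariant open $Y_\sigma \subset Y_\Sigma$ containing $\varphi^{\an}(x)$ and replace $U$ by $\varphi^{-1}(Y_\sigma)$, so that $\varphi \colon U \to Y_\sigma = \Spec K[S_\sigma]$. Then I would apply the construction behind \cite[Proposition 4.16]{Gubler}: if $\chi_1,\dots,\chi_n$ are elements of $S_\sigma$ generating $K[S_\sigma]$, one replaces each $\chi_j$ for which $|\chi_j(x)| = 0$ by $\chi_j - a_j$, where $a_j \in K^*$ is chosen with $|a_j|$ strictly smaller than all the finite values $|\chi_i(x)|$. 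The existence of such scalars uses crucially that the valuation on $K$ is non-trivial. The modified characters define a closed embedding $\varphi' \colon U' \to \G_m^n$ from the very affine locus $U' \subset U$ on which none of them vanishes; by the choice of shifts, $x \in U'^{\an}$. Finally, the torus translation by $(a_j)$ supplies an affine map of toric varieties $\psi_{\varphi,\varphi'}$ fitting into the commutative square of Definition \ref{defn family}, so that $\varphi'$ is a $\tor$-refinement of $\varphi|_{U'}$.

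The main obstacle I expect is the bookkeeping in this second step: verifying that the translated embedding really factors through $\varphi$ in the required sense, and that the induced affine map of tropical toric varieties $\Trop(\psi_{\varphi,\varphi'})$ has image meeting $\Trop_{\varphi}(U')$ correctly. This reduces to an explicit computation on monoids, using the definition of tropicalization, but the choice of shifts must be made uniformly and with respect to all relevant characters simultaneously, which is precisely where the non-trivial valuation hypothesis is indispensable and where the argument would have to be carried out with care.
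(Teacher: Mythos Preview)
Your approach has a genuine gap: the family $\G$ is \emph{not} cofinal in $\tor$, so Lemma~\ref{lem admissible} does not apply. The paper itself flags this just before its proof, noting that the theorem ``is not a formal consequence of the definition, since the family $\G$ does not see any boundary considerations.''

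The specific error is in your last step. The ``torus translation by $(a_j)$'' you invoke is an \emph{additive} shift $z_j \mapsto z_j + a_j$, but in this paper an affine map of toric varieties is a morphism of toric varieties composed with a \emph{multiplicative} torus translation. The map sending $z_j$ to $z_j + a_j$ is not of this form, so your $\varphi'$ is not a $\tor$-refinement of $\varphi$. In fact no construction can repair this: take $X = \A^1$, $\varphi = \id$, and $x$ the origin. Any very affine open $U' \subset \A^1$ containing $0$ has the coordinate $t$ vanishing at $0$, hence $t \notin \Ocal(U')^\times$. But an affine map of toric varieties $\G_m^n \to \A^1$ is of the form $z \mapsto c\,z^e$ for some $c \in K^\times$ and $e \in \Z^n$, which pulls back to an invertible function on $U'$ and can never equal $t$. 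So no $\G$-embedding of $U'$ can refine $\varphi$ in $\tor$.

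The paper's proof instead argues directly at the level of forms. Given $(V,\varphi,\beta)$ with $\varphi\colon U \to Y_\Sigma$ and $x$ lying over a non-dense torus orbit $T$, the \emph{condition of compatibility at the boundary} in the definition of superforms forces $\beta$, near $\trop_\varphi(x)$, to be the pullback $\pi^*\beta_0$ of a form $\beta_0$ on the stratum $\Trop(T)$. One then chooses a $\G$-chart $\varphi'\colon U' \to \G_m^r$ dominating $\varphi|_{\varphi^{-1}(T)}$ via a coordinate projection, and defines the preimage form using $\beta_0$. The point is that this uses a structural property of the sheaf $\AS^{p,q}$ on tropical toric varieties, not a combinatorial relation between the families; the paper remarks later (after Definition~\ref{defn analytic cocycles}) that the same argument fails for tropical cochains precisely because they lack this boundary compatibility.
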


This theorem is not a formal consequence of the definition, 
since the family $\G$ does not see any boundary considerations. 
Since for algebraically closed $K$, 
Gubler defined in \cite{Gubler} the sheaf $\AS$ 
like we here defined $\AS_{\G_{\can}}$ this theorem
is crucial for us as it proves that what we show in later sections actually 
applies to $\AS$. 

\begin{proof}
We have to show that the map $\Psi := \Psi_{\G, \tor} \colon \AS_{\G} \to \AS_{\tor}$
is surjective. 
We argue locally around a point $x \in \Xan$. 
Let $\alpha \in \AS_{\tor}$ be locally given by $(V, \varphi, \beta)$,
where $\varphi \colon U \to Y_\Sigma$ is a closed embedding of an open subset $U$ of $X$
into a toric variety.  
We fix coordinates on the torus stratum $T$ of $Y_\Sigma$ that $x$ is mapped to under $\varphi^{\an}$, 
identifying $T$ with $\G_m^n$ for some $n$
and we denote by $Z := \varphi^{-1}(T)$. 
Since very affine open subset form a basis of the topology of $X$, there exists 
a closed embedding $\varphi' \colon U' \to \G_m^r$ for $x \in U'^{\an} \subset U^{\an}$ 
such that if $\pi \colon \G_m^r \to \G_m^n$ is 
the projection to the first $n$ factors, then $\varphi|_Z = \pi \circ \varphi' \circ \iota_Z$. 

After shrinking $V$ we may assume that $\trop_{\varphi}(V)$ is a neighborhood of $\trop_{\varphi}(x)$, 
on which $\beta = \pi^*(\beta_0)$ holds, where $\pi$ is the projection to the stratum $\Trop(T)$ of $\Trop(Y)$. 
We define the form $\alpha' \in \AS_{\G}$ to be given by $(V', \varphi', \Trop(\psi)^* \beta_0)$. 
Since $\Psi(\alpha')$ is by definition given by the same triple, 
we have to show that $(V, \varphi, \beta)$ and $(V', \varphi', \Trop(\psi)^*\beta_0)$ define the same form
in a neighborhood of $x$. We do that by pulling back to a common subcharts, namely 
$\varphi \times \varphi' \colon U' \to Y_\Sigma \times \G_m^r$.
When pulling back on the tropical side we find that both $\Trop(\pi_1) ^* \beta$ as well 
as $\Trop(\pi_2)^* \beta_0$ are simply the pullback of $\beta_0$, hence 
these forms agree indeed. 
\end{proof}

\begin{kor} \label{Cor Gm admissible}
The family of tropicalizations $\G_{\can}$ is admissible.
\end{kor}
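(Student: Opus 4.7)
The plan is to deduce this corollary by chaining together the admissibility of $\G$ (Theorem \ref{thm G admissible}) with a cofinality argument for $\G_{\can}$ inside $\G$ via Lemma \ref{lem admissible}. Recall that admissibility means $\Psi_{\G_{\can},\tor}$ is an isomorphism, and this morphism factors as the composition $\Psi_{\G, \tor} \circ \Psi_{\G_{\can}, \G}$ by the uniqueness clause of Lemma \ref{lem admissible}. Since the first factor is already known to be an isomorphism, it suffices to show that $\Psi_{\G_{\can}, \G}$ is an isomorphism.

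To apply Lemma \ref{lem admissible} to $\Scal = \G$ and $\Scal' = \G_{\can}$, I need three ingredients: (a) $\G$ is fine enough, (b) $\G_{\can}$ is a fine enough full subfamily of $\G$, and (c) $\G_{\can}$ is cofinal (or final) in $\G$. Point (a) holds by Example \ref{defn G} (since $K$ is algebraically closed, hence non-trivially valued is not actually needed here, but we are in the algebraically closed case where $\G$ is fine enough; note the corollary is implicitly assuming non-trivial valuation via Theorem \ref{thm G admissible}). Point (c), cofinality, is exactly the last sentence of Example \ref{defn Gcan}. For the full-subfamily part of (b), one checks from the definitions: every $\varphi_U \in \G_{\can, \map}$ is indeed an embedding of a very affine open into a torus, hence lies in $\G_{\map}$; and a refinement of $\varphi_U$ within $\G_{\can}$ is induced by an inclusion $U' \subset U$, which in particular is a refinement in $\G$ via the induced torus homomorphism $T_U \to T_{U'}$ (dual to the restriction map on units). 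Conversely, one sees that if $\varphi_{U'}$ and $\varphi_U$ both belong to $\G_{\can,\map}$ and $\varphi_{U'}$ refines $\varphi_U$ in $\G$, then by universality of the canonical embedding the connecting map must be the one from the inclusion $U'\subset U$, so $\varphi_{U'}$ is already a refinement in $\G_{\can}$. Fine-enoughness of $\G_{\can}$ in (b) then follows from cofinality together with fullness by Lemma \ref{lem fine enough}.

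Having verified (a), (b), (c), Lemma \ref{lem admissible} applied to $\Scal' = \G_{\can} \subset \Scal = \G$ gives that $\Psi_{\G_{\can}, \G}$ is an isomorphism. Composing with $\Psi_{\G, \tor}$, which is an isomorphism by Theorem \ref{thm G admissible}, yields that $\Psi_{\G_{\can}, \tor}$ is an isomorphism, i.e., $\G_{\can}$ is admissible.

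The only mildly nontrivial point is the verification that $\G_{\can}$ is a \emph{full} subfamily of $\G$, since the general notion of refinement in $\G$ allows arbitrary affine maps of toric varieties between tori, whereas refinements in $\G_{\can}$ are by definition induced by inclusions of very affine opens. This is the main place where one uses the universal property of the canonical moment map $\varphi_U\colon U \inj T_U$ coming from $M = \Ocal^*(U)/K^*$: any torus embedding of $U$ factors uniquely through $\varphi_U$ up to a torus translation, so compatibility of $\varphi_{U'}$ and $\varphi_U$ via any toric map forces the map to come from the restriction $\Ocal^*(U)/K^* \to \Ocal^*(U')/K^*$.
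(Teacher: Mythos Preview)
Your proof is correct and follows essentially the same route as the paper's one-line argument, which simply cites Lemma~\ref{lem admissible}, cofinality of $\G_{\can}$ in $\G$, and Theorem~\ref{thm G admissible}. You have just made explicit what the paper leaves implicit, namely the verification that $\G_{\can}$ is a full subfamily (needed to invoke Lemma~\ref{lem fine enough} for fine-enoughness) and the factorization $\Psi_{\G_{\can},\tor} = \Psi_{\G,\tor}\circ\Psi_{\G_{\can},\G}$. One small remark: your parenthetical about algebraic closure versus non-trivial valuation is slightly garbled---algebraic closure does not by itself imply non-trivial valuation---but since the corollary sits in the subsection where both hypotheses are in force, this causes no harm.
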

\begin{proof}
This follows from Lemma \ref{lem admissible}, the fact that $\G_{\can}$ is cofinal in $\G$ and 
Theorem \ref{thm G admissible}. 
\end{proof}

\subsection{Comparing with the analytic definition by Chambert--Loir and Ducros}

We denote by $\AS^{p,q}_{\an}$ the analytically defined sheaf of differential forms by 
Chambert-Loir and Ducros \cite{CLD}.

\begin{defn}
We define a map
\begin{align*}
\Psi_{\an}\colon \AS^{p,q}_{\A} \to \AS^{p,q}_{\an}.
\end{align*}
Let $(V, \varphi, \alpha)$ an $\A$-tropical chart, where $\varphi$ is given by $f_1,\dots,f_r$.
Let $f_{i_1},\dots,f_{i_k}$ be those $f_i$ that do not vanish at $X$. 
Then those induce a map $\varphi' \colon V \to \G_m^{k}$
and $\trop_{\varphi'}(V) = \trop_{\varphi}(V) \cap \{ z_{i_j} = -\infty \text{ for all } j = 1,\dots,k\}$,
which is a stratum of $\trop_{\varphi}(V)$ and denote by $\alpha_I$ the restriction of 
$\alpha$ to that stratum.. 
Then we define $\Psi(\alpha)$ to be given by $(V, \varphi_{\G_m}, \alpha_I)$. 
\end{defn}

\begin{satz}
The map $\Psi_{\an}$ is an isomorphism.
\end{satz}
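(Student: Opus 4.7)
The plan is to factor $\Psi_{\an}$ through the chain of isomorphisms established in the previous subsection, thereby reducing the claim to Gubler's comparison between his algebraic definition of $\AS$ and the analytic sheaf of Chambert--Loir and Ducros.

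First I would verify that $\Psi_{\an}$ is well-defined as a morphism of sheaves. The key observation is that if $(V, \varphi, \alpha)$ is an $\A$-tropical chart with $\varphi$ given by $f_1, \dots, f_r$, then locally around a point of $V$ the image $\trop_\varphi(V)$ is contained in the closure of the stratum of $\T^r$ where the coordinates indexed by those $f_i$ not vanishing on (a suitable neighborhood in) $V$ remain finite. By the condition of compatibility at the boundary for superforms on tropical toric varieties, $\alpha$ is locally determined by its restriction $\alpha_I$ to this stratum, which is naturally $\trop_{\varphi'}(V)$ for the induced map $\varphi' \colon V \to \G_m^k$. Hence different representatives of the same $\A$-form produce the same analytic image, and the assignment is compatible with refinements.

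Second I would show that $\Psi_{\an}$ fits into the commutative diagram
\begin{align*}
\begin{xy}
\xymatrix{
\AS^{p,q}_{\A} \ar[r]^{\Psi_{\an}} \ar[d]_{\Psi_{\A,\tor}} & \AS^{p,q}_{\an} \\
\AS^{p,q}_{\tor} & \AS^{p,q}_{\G_{\can}} \ar[l]^{\Psi_{\G_{\can},\tor}} \ar[u]
}
\end{xy}
\end{align*}
in which the left vertical arrow is an isomorphism because $\A$ is final in $\tor$ (Lemma \ref{lem admissible}), the lower horizontal arrow is an isomorphism by Corollary \ref{Cor Gm admissible}, and the right vertical arrow is Gubler's isomorphism identifying his algebraic sheaf (which coincides with $\AS_{\G_{\can}}$) with the analytic sheaf $\AS_{\an}$. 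Tracing a form locally given by $(V, \varphi, \beta)$ through the diagram, both compositions arrive at the $\G_{\can}$-chart consisting of the induced torus embedding $\varphi'$ together with the restriction $\beta_I$ of $\beta$ to the corresponding stratum.

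The main obstacle will be this commutativity check on the level of representing tuples: one must verify that the admissibility isomorphism between $\AS^{p,q}_{\tor}$ and $\AS^{p,q}_{\G_{\can}}$ associates to an $\A$-chart $(V, \varphi, \beta)$ precisely the $\G_{\can}$-chart $(V, \varphi', \beta_I)$ obtained by projecting away from the vanishing coordinates and restricting the form to that stratum. This is essentially a bookkeeping argument using the injectivity statement in Lemma \ref{lem check one chart} applied to a common refinement $\varphi \times \varphi'$, as in the proof of Theorem \ref{thm G admissible}. Once commutativity is established, $\Psi_{\an}$ is an isomorphism since the other three maps in the diagram are.
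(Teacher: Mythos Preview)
Your approach is genuinely different from the paper's and has a real gap: it only works when $K$ is algebraically closed and non-trivially valued. The family $\G_{\can}$ is only defined under the assumption that $K$ is algebraically closed (Example~\ref{defn Gcan}), and Corollary~\ref{Cor Gm admissible} is proved in the subsection on Gubler's definition, which explicitly assumes $K$ non-trivially valued and algebraically closed. Gubler's comparison between $\AS_{\G_{\can}}$ and $\AS_{\an}$ likewise lives under those hypotheses. The theorem you are proving, however, is stated for an arbitrary complete non-archimedean field, including the trivially valued case (where $\G$ is not even fine enough, cf.~Example~\ref{defn G}). So your diagram does not exist in the generality required.

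By contrast, the paper argues directly on stalks. Injectivity is immediate from \cite[Lemme~3.2.2]{CLD}: if the stratum restriction $\alpha_I$ vanishes then $\alpha$ does. Surjectivity is the substantive step and does not route through Gubler at all: given an analytic chart $V \to \G_m^{r,\an}$ with functions $f_1,\dots,f_r$, one truncates the Laurent series expansions of the $f_i$ to Laurent polynomials $f'_i$ with $\vert f_i \vert = \vert f'_i \vert$ near $x$, and then extends $(f'_1,\dots,f'_r)$ by algebraic functions $g_1,\dots,g_s$ to a closed embedding of a Zariski-open $U$ into affine space. This produces an honest $\A$-tropical chart whose image under $\Psi_{\an}$ recovers the original form. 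The truncation trick is what makes the argument work over any $K$; your factorization through $\G_{\can}$ cannot substitute for it.
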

\begin{proof}
We only have to prove the statement on stalks, so we fix $x \in \Xan$. 
By construction any form is around $x$ given by one $\A$-tropical chart. 
Then $\Psi_{\an}(\alpha)$ is also given around $x$ by one chart. 
If now $\Psi_{\an}(\alpha) = 0$, then $\alpha_I$ equals zero by \cite[Lemme 3.2.2]{CLD} and 
thus $\alpha$ equals zero.
This shows injectivity. 

To show surjectivity, we take a form $\alpha$ that is given locally around $x$ by a map 
$V \to \G_m^{r, \an}$ defined by $f_1, \dots, f_r$
and a form $\alpha_1$ on $\Trop_{f_1,\dots,f_r}(V)$.  
The $f_r$ can be expressed as Laurent series and around $x$ we may cut them of in 
sufficiently high degree to obtain Laurent polynomials 
$f'_1 \dots f'_r$ such that $\vert f_i \vert = \vert f'_i \vert$ for all $i$ on a neighborhood $V_1$ of $x$. 
Now choosing functions $g_1, \dots,g_s \in \Ocal_X(U)$ for an open subset $U$ of $X$ 
such that $x \in U^{\an}$, the $f'_i$ and $g_i$ define a closed embedding $\varphi \colon U \to \A^{r+s}$ 
with the property that there exists a neighborhood $V_2$ of $x$ in $V_1$ and a tropical chart $(V_2, \varphi)$.
We may assume that $g_1,\dots,g_t$ are non vanishing at $x$ and $g_{t+1},\dots,g_s$ are. 
Now $\alpha|_{V_2}$ is defined by a form $\alpha_2$ on $\Trop_{f'_1,\dots,f'_r,g_1,\dots,g_t}(V_2)$. 
By construction we have 
$\Trop_{f'_1,\dots,f'_r,g_1,\dots,g_t}(V_2)$ is the stratum of $\Trop_{\varphi}(V_2)$, where the coordinates 
$t+1$-th to $s$-th coordinate are $- \infty$. 
Denote by $\pi \colon \Trop_{\varphi}(V_2) \to \Trop_{f'_1,\dots,f'_r,g_1,\dots,g_t}(V_2)$ the projection
Let $\alpha_3 = \pi^* \alpha_2$. 
Then we define $\beta \in \AS^{p,q}_{\A}(V_2)$ by $(V_2, \varphi, \alpha_3)$. 
Now $\Psi_{\an}(\beta)$ is given by $V_2 \to \G_m^{r+t}$ given by $f'_1,\dots,f'_r,g_1,\dots,g_t$ and $\alpha_2$.
Since $\trop_{f_1,\dots,f_r,g_1,\dots,g_t} = \trop_{f'_1,\dots,f'_r,g_1,\dots,g_t} \colon V_2 \to \R^{r+t}$, 
the result follows from \cite[Lemme 3.1.10]{CLD}. 
\end{proof}

\begin{lem} \label{lem tracking through maps}
Let $K$ be non-trivially valued and $\alpha \in \AS^{p,q}_{\G}(V)$ given by 
$(V, \varphi, \alpha')$. 
Then $\Psi_{\an} \circ \Psi_{\A, \tor}^{-1} \circ \Psi_{\G, \tor} (\alpha)$ is given by $(V, \varphi^{\an}, \alpha')$. 
\end{lem}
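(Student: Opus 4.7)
The plan is to unwind each of the three composed maps acting on the representing triple, one step at a time. Step one is essentially tautological: by construction $\Psi_{\G, \tor}$ sends the form represented by $(V, \varphi, \alpha')$ to the form in $\AS_{\tor}^{p,q}$ represented by the same triple, now regarded as a $\tor$-chart (since the torus $\G_m^n$ is itself a toric variety).

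Step two identifies $\Psi_{\A, \tor}^{-1}$ applied to this $\tor$-chart. Since $\A$ is final in $\tor$, I invoke the explicit construction from the proof of Lemma~\ref{lem admissible}: one picks a closed toric immersion $m$ of the target toric variety so that $m \circ \varphi$ lies in $\A_{\map}$. The natural choice is $\iota \colon \G_m^n \hookrightarrow \A^n$, which is a closed immersion and an affine map of toric varieties; then $\iota \circ \varphi \in \A_{\map}$, and $\varphi$ is a refinement of $\iota \circ \varphi$ via $\iota$. Because $\Trop(\iota)$ is a homeomorphism onto its image $\R^n \subset \T^n$, I may identify $\alpha'$ with its pushforward to $\trop_{\iota \circ \varphi}(V)$, and conclude that the preimage under $\Psi_{\A, \tor}$ is represented by the $\A$-chart $(V, \iota \circ \varphi, \alpha')$.

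Step three applies $\Psi_{\an}$ to this $\A$-chart. The defining functions of $\iota \circ \varphi$ are precisely the components $f_1, \dots, f_n$ of $\varphi$, all of which are invertible on $U$ since $\varphi$ factors through $\G_m^n$. By the definition of $\Psi_{\an}$, every $f_i$ is then retained as one of the non-vanishing components, the induced map $V \to \G_m^{n,\an}$ coincides with $\varphi^{\an}$, and the relevant stratum is all of $\trop_{\iota \circ \varphi}(V)$, so the restricted form $\alpha_I$ equals $\alpha'$. This yields the analytic chart $(V, \varphi^{\an}, \alpha')$, as claimed. The main obstacle is purely notational, namely keeping careful track of which tropicalization $\alpha'$ is being viewed on at each step; this is handled by functoriality of tropicalization together with the fact that $\Trop(\iota)$ is a homeomorphism onto its image.
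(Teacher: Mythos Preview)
Your argument has a genuine error in Step two: the inclusion $\iota \colon \G_m^n \hookrightarrow \A^n$ is an \emph{open} immersion, not a closed one. Consequently $\iota \circ \varphi \colon U \to \A^n$ is only a locally closed embedding (for instance, take $U = \G_m$ and $\varphi = \id$), so it does not lie in $\A_{\map}$, which by definition consists of closed embeddings of affine opens into affine space. The construction from Lemma~\ref{lem admissible} for final subfamilies explicitly requires $m$ to be a closed embedding of toric varieties, and your choice of $m = \iota$ fails this.

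The paper's proof repairs exactly this point by using instead the closed toric embedding $m \colon \G_m^r \hookrightarrow \A^{2r}$, $t \mapsto (t_1, t_1^{-1}, \dots, t_r, t_r^{-1})$; this is the embedding from Example~\ref{trop of affine embedding} applied to the zero cone, whose monoid $S_{\{0\}} = M$ is generated by $\pm e_1, \dots, \pm e_r$. One then sets $\varphi_{\pm} := m \circ \varphi$, given by $f_1, f_1^{-1}, \dots, f_r, f_r^{-1}$, which is a genuine closed embedding of $U$ into $\A^{2r}$. Since all $2r$ functions are invertible, $\Psi_{\an}$ applied to $(V, \varphi_{\pm}, \pi^*\alpha')$ yields $(V, \varphi_{\pm}^{\an}, \pi^*\alpha')$, where $\pi \colon \R^{2r} \to \R^r$ is projection onto the odd coordinates; finally one observes that $\varphi_{\pm}^{\an}$ refines $\varphi^{\an}$ with induced tropical map $\pi$, so this equals $(V, \varphi^{\an}, \alpha')$ in $\AS^{p,q}_{\an}(V)$. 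Your Steps one and three are fine once Step two is corrected in this way.
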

\begin{proof}
Let $\varphi \colon U \to \G_m^r$ be given by invertible functions $f_1,\dots,f_r$.
Then the corresponding embedding into $\A^{2r}$, which we denote by $\varphi_{\pm}$ 
is given by $f_1,f_1^{-1},\dots,f_r,f_r^{-1}$.
Denote by $\pi \colon \R^{2r} \to \R^r$ the projection to the odd coordinates. 
Then by construction $\Psi^{\an} \circ \Psi_{\G_m}(\alpha)$ is given by $(V, \varphi_{\pm}^{\an}, \pi^*(\alpha'))$. 
Since $\varphi_{\pm}^{\an}$ is a refinement of $\varphi^{\an}$ and the map induced on tropicalizations 
is precisely $\pi$, we find that 
$(V, \varphi_{\pm}^{\an}, \pi^*(\alpha)) = (V, \varphi^{\an}, \alpha') \in \AS^{p,q}_{\an}(V)$, 
which proves the claim. 
\end{proof}

\section{Integration} \label{sect integration}

In this section we denote by $n$ the dimension of $X$
and we let $\Scal$ be a fine enough family of tropicalizations.

\begin{defn}
Let $\alpha \in \AS^{n,n}_{\Scal,c}(\Xan)$ be an $(n,n)$-form with compact support. 
A \emph{tropical chart of integration for $\alpha$} is a $\Scal$-tropical chart $(U^{\an}, \varphi)$ 
where $U$ is an open subset of $X$ and $\varphi \colon U \to Y_{\Sigma}$ 
is a closed embedding of $U$ into a toric variety, 
such that $\alpha|_{U^{\an}}$ is given by 
$(U^{\an}, \varphi, \beta)$ for $\beta_U \in \AS^{n,n}_c(\Trop_{\varphi}(U))$. 
\end{defn}

\begin{lem}
There always exists an $\A$-tropical chart of integration. 
If $K$ is non-trivially valued and algebraically closed, there always exist $\G$-tropical 
charts of integration. 
If $\Scal$ is global, then there always exist 
$\Scal$-tropical charts of integration. 
\end{lem}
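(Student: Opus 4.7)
My plan is to handle the three cases in order of increasing difficulty, starting with the global case where Theorem \ref{thm one chart} does most of the work.

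For the global case, let $\alpha \in \AS^{n,n}_{\Scal,c}(\Xan)$ and choose a defining family of $\Scal$-tropical charts. By compactness of $\supp(\alpha)$ a finite subfamily $(V_i, \varphi_i, \alpha_i)_{i \in I'}$ already covers it. Since $\Scal$ is global and fine enough, Theorem \ref{thm one chart} collapses this data into a single $\Scal$-tropical chart $(V, \varphi, \alpha')$ with $\varphi \colon X \to Y_\Sigma$ defined on all of $X$ and $V = \bigcup_{i \in I'} V_i \supset \supp(\alpha)$. The argument from the proof of Theorem \ref{thm forms are limit}, based on \cite[Corollaire 3.2.3]{CLD}, shows that $\supp(\alpha') = \trop_\varphi(\supp(\alpha))$ is compact inside $\trop_\varphi(V)$, so $\alpha'$ extends by zero to $\tilde\alpha' \in \AS^{n,n}_c(\Trop_\varphi(X))$. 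This makes $(X^{\an}, \varphi)$ a chart of integration for $\alpha$.

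For $\Scal = \A$ or $\G$, the family is no longer global and one must work locally, imitating the global argument after restricting to a single affine (resp.\ very affine) patch. I would first cover $\supp(\alpha)$ by a finite family of tropical charts $(V_i, \varphi_i, \alpha_i)_{i=1}^{k}$ with $\varphi_i \colon U_i \hookrightarrow \A^{N_i}$ (resp.\ $\G_m^{N_i}$) and set $U := \bigcap_i U_i$; this intersection is affine (resp.\ very affine) by the separatedness of $X$. One then chooses a closed embedding $\varphi \colon U \hookrightarrow \A^M$ (resp.\ $\G_m^M$) whose coordinate functions include all the coordinate functions of the $\varphi_i|_U$, together with enough additional generators of $\Ocal(U)$ (resp.\ $\Ocal^*(U)$) to ensure $\varphi$ is itself a closed embedding. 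Then $\varphi$ refines every $\varphi_i|_U$ via a coordinate projection, so viewing the restrictions $(\varphi_i|_U)$ as a global family on $U$, the gluing argument of Theorem \ref{thm one chart} applies with $U$ in place of $X$ and produces a single chart representing $\alpha|_{U^{\an}}$. Extension by zero then yields the chart of integration $(U^{\an}, \varphi)$.

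The principal obstacle is that in the non-global cases the resulting $U$ need not contain all of $\supp(\alpha)$ when $X$ is not quasi-projective, so the constructed chart only captures $\alpha|_{U^{\an}}$. This matches the lemma's plural phrasing ``there exist \dots\ charts of integration'', indicating that $\supp(\alpha)$ may need to be covered by several such charts, as in the standard partition-of-unity setup. The technical pivot in each case is controlling the support of the tropical form after extension by zero, which proceeds via the local injectivity furnished by Lemma \ref{Lem check zero locally 2}.
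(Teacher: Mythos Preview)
Your treatment of the global case is correct and is exactly what the paper does: pass to a finite cover of $\supp(\alpha)$, apply Theorem \ref{thm one chart}, and extend by zero.

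For the $\A$ and $\G$ cases, however, there is a genuine gap. Your reading of the plural ``there exist \dots\ charts of integration'' as allowing several charts per form is a misinterpretation: a tropical chart of integration is by definition a \emph{single} chart $(U^{\an},\varphi)$, and the very next theorem uses it to define $\int_{\Xan}\alpha$ as a single integral over $\Trop_\varphi(U)$. The plural is just quantifying over all $\alpha$. So the ``obstacle'' you flag at the end is exactly the point that needs to be resolved, not excused.

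The missing idea is a dimension argument specific to $(n,n)$-forms: if $Z\subsetneq X$ is a proper closed subvariety, then $\dim Z<n$, so the restriction of any $(n,n)$-form to $Z^{\an}$ vanishes (locally it is pulled back from $\AS^{n,n}$ on a tropical variety of dimension $<n$, which is zero). Hence $\supp(\alpha)\cap Z^{\an}=\emptyset$ for every such $Z$. Now take your finite cover by (very) affine charts with domains $U_1,\dots,U_k$ and set $U=\bigcap_i U_i$. Since $X$ is irreducible, $X\setminus U=\bigcup_i(X\setminus U_i)$ is a proper closed subset, so $\supp(\alpha)\subset U^{\an}$. The intersection $U$ is again affine (resp.\ very affine), and from here your construction of a single refining embedding $\varphi\colon U\hookrightarrow\A^M$ (resp.\ $\G_m^M$) and application of the one-chart argument goes through to produce a genuine chart of integration $(U^{\an},\varphi)$.

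The paper itself does not spell this out; it simply cites \cite[Lemma 3.2.57]{JellThesis} and \cite[Proposition 5.13]{Gubler}, where this support-versus-dimension step is carried out.
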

\begin{proof}
The existence of $\A$-tropical charts of integration is proved in \cite[Lemma 3.2.57]{JellThesis} and 
the existence of $\G$-tropical charts of integration is proved in \cite[Proposition 5.13]{Gubler}. 
Note that \cite{JellThesis} assumes that $K$ is algebraically closed, but the proof goes through here.  

In the case of global $\Scal$, let $\alpha$ be given by $(V_i, \varphi_i, \alpha_i)_{i \in I}$. 
Then we may pick finitely many $i$ such that $\supp(\alpha)$ is covered by the $V_i$ and apply Theorem 
\ref{thm one chart} to obtain an $\Scal$-tropical chart of integration. 
\end{proof}

\begin{satz} \label{thm independence of integral}
Let $\alpha \in \AS^{n,n}_{\Scal, c}(\Xan)$ 
and $(U^{\an}, \varphi, \alpha_U)$ an $\Scal$-tropical chart of
integration for $\alpha$. Then the value
\begin{align*}
\int_{\Xan}^{\Scal} \alpha := \int_{\Trop_{\varphi}(U)} \alpha_U
\end{align*}
depends only on $\alpha$, in the sense that it is independent of the triple $(U, \varphi, \alpha_U)$
representing $\alpha$. 
Further it is also independent of $\Scal$, in the sense that
\begin{align*}
\int^{\Scal}_{\Xan} \alpha = \int^{\tor}_{\Xan} \Psi_{\Scal, \tor}(\alpha).
\end{align*}
\end{satz}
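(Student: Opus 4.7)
The plan is to establish the two assertions in turn. First I would verify that the integral is unchanged under passing to an $\Scal$-tropical subchart of the representing triple, and then bridge arbitrary representing triples by a partition-of-unity argument; the independence from $\Scal$ will then be formal.

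For the local step, suppose $(U^{\an}, \varphi', \alpha'_U)$ is an $\Scal$-tropical subchart of $(U^{\an}, \varphi, \alpha_U)$ that represents the same form. By Lemma \ref{lem check one chart} one necessarily has $\alpha'_U = \Trop(\varphi,\varphi')^* \alpha_U$, and the task is to show
\[
\int_{\Trop_{\varphi'}(U)} \Trop(\varphi,\varphi')^* \alpha_U \;=\; \int_{\Trop_\varphi(U)} \alpha_U.
\]
Since $\varphi$ and $\varphi'$ are two closed embeddings of the same variety $U$, the affine map $\psi_{\varphi,\varphi'}$ restricts to an isomorphism $\varphi'(U) \to \varphi(U)$, so $\Trop(\varphi,\varphi')$ is a proper surjection between $n$-dimensional tropical subvarieties whose weights are compatible. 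The equality then follows from the projection formula for integration of superforms under affine maps of tropical toric varieties, verified cell-by-cell on top-dimensional strata.

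Next I would pass to the general situation. Let $(U_i^{\an}, \varphi_i, \alpha_i)$, $i = 1,2$, be two tropical charts of integration for $\alpha$; then $\supp(\alpha)$ is compact and lies in $U_1^{\an} \cap U_2^{\an}$. Using that $\Scal$ is fine enough, I would cover $\supp(\alpha)$ by finitely many $\Scal$-tropical charts $(V_k, \varphi_k')$ that are simultaneously $\Scal$-tropical subcharts of both $(U_1^{\an}, \varphi_1)$ and $(U_2^{\an}, \varphi_2)$. A subordinate partition of unity $(\rho_k)$ on $\Xan$ (which exists by paracompactness together with cutoff functions pulled back through the tropicalizations) yields $\alpha = \sum_k \rho_k \alpha$ with $\supp(\rho_k \alpha) \subset V_k$. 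Applying the refinement step twice—once with $\varphi_1$ and once with $\varphi_2$ as a coarsening of $\varphi_k'$—each piece $\int \rho_k \alpha$ admits three equal expressions; summing in $k$ gives the desired equality of the two original integrals. Independence from $\Scal$ is then immediate: $\Psi_{\Scal,\tor}$ sends a form presented by a triple to the form presented by the same triple viewed in $\tor$, so every $\Scal$-tropical chart of integration for $\alpha$ is a $\tor$-tropical chart of integration for $\Psi_{\Scal,\tor}(\alpha)$, and both integrals are defined by the identical tropical integral.

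The main obstacle is the change-of-variables claim in the refinement step. The map $\Trop(\varphi,\varphi')$ can fail to be a homeomorphism—it may collapse distinct boundary strata of the ambient tropical toric varieties—so the correct formulation respecting stratifications and weights is delicate. Should a direct polyhedral verification prove cumbersome, I would alternatively invoke the comparison results of Section \ref{sect comparison theorems} to reduce to the $\G$- or $\A$-case, where the corresponding invariance of tropical integration is already contained in Gubler's or Chambert-Loir--Ducros's work; the general $\Scal$ case then follows by the same partition-of-unity argument.
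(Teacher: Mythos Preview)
Your primary approach rests on the ``projection formula for integration of superforms under affine maps of tropical toric varieties'', which you say is ``verified cell-by-cell''. This is exactly the non-trivial content of the Sturmfels--Tevelev multiplicity formula: the fact that $\psi_{\varphi,\varphi'}$ is an algebraic isomorphism $\varphi'(U)\to\varphi(U)$ does \emph{not} by itself imply that the pushforward of the weighted tropical cycle $\Trop_{\varphi'}(U)$ along $\Trop(\varphi,\varphi')$ equals $\Trop_\varphi(U)$. The fibres of $\Trop(\varphi,\varphi')$ over a top-dimensional cell can be positive-dimensional or have several components, and the lattice indices and weights only match up via Sturmfels--Tevelev. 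So the step you flag as the ``main obstacle'' is a genuine gap in the direct argument, not merely a bookkeeping nuisance.

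The paper follows your fallback suggestion, but more cleanly and with one extra ingredient you omit. Rather than a partition-of-unity comparison between two arbitrary charts, it shows that \emph{every} $\Scal$-chart of integration $(U^{\an},\varphi,\alpha_U)$ yields, after restricting to the preimage $\mathring U$ of the dense torus (where $\supp(\alpha)$ and $\supp(\alpha_U)$ already live), a $\G$-chart of integration with the same tropical integral. Gubler's result (itself resting on Sturmfels--Tevelev) then says this $\G$-integral is intrinsic, hence so is the $\Scal$-integral; independence of $\Scal$ is immediate from the same equation. The extra ingredient is a base-change step: Gubler's result requires $K$ non-trivially valued and algebraically closed, so for general $K$ the paper passes to such an extension $L$, using that tropicalization is invariant under base change. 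Without this, your argument would be incomplete for trivially valued $K$, where $\G$ is not even fine enough.
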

\begin{proof}
Assume first that $K$ is non-trivially valued and algebraically closed. 
Then Gubler showed in \cite[Lemma 5.15]{Gubler} that 
as a consequence of the Sturmfels-Tevelev-formula \cite{StTe, BPR}
the first part of the statement holds for $\Scal = \G$. 

Let $\varphi \colon U \to Y_\Sigma$. 
We may assume that $\varphi(U)$ meets the dense torus $T$. 
Denote by $\mathring U = \varphi^{-1}(T)$ and 
$\mathring \varphi := \varphi|_{\mathring U}$. 
We have $\supp(\alpha) \subset \mathring U$ 
and $\supp(\alpha_U) \subset \Trop_{\varphi|_{\mathring U}}(\mathring U)$ 
by \cite[Proposition 3.2.56]{JellThesis},
which implies that $({\mathring U}^{\an}, \varphi, \alpha_U|_{\Trop_{\varphi}(\mathring U)})$
is a $\G$-tropical chart of integration for $\alpha$. 
Thus we get
\begin{align} \label{eq integral fundamental}
\int^{\Scal}_{\Xan} \alpha = \int_{\Trop_{\varphi}(U)} \alpha_U = 
\int_{\Trop_{\varphi}(\mathring U)}
\alpha_U|_{\Trop_{\varphi}(\mathring U)} = 
\int^{\G}_{\Xan} \Psi_{\Scal, \tor} \circ \Psi_{\G,\tor}^{-1} \alpha
\end{align}
The right hand side of equation (\ref{eq integral fundamental}) 
does not depend on $U, \varphi$ or $\alpha_U$ by Gubler's 
result, hence neither does the left hand side. So we proved the first part of the theorem when
$K$ is non-trivially valued. 
Then second part follows also from (\ref{eq integral fundamental}) applied once to $\Scal$
and once to $\tor$.

We reduce to the non-trivially valued case by picking a non-archimedean, complete, algebraically 
closed non-trivially valued extension $L$ of $K$.  
Let $p\colon X_L \rightarrow X$ be the canonical map. 
Since tropicalization is invariant under base field extension (cf.~\cite[Section 6 Appendix]{Payne}), we can define
$\alpha \in \AS^{n,n}_c(\Xan_L)$ to be given by $(U^{\an}_L, \varphi_L, \alpha_U)$.  
Now we have 
\begin{align*}
\int_{\Trop_{\varphi}(U)} \alpha_U = \int_{\Trop_{\varphi_L}(U_L)} \alpha_U
\end{align*}
and the right hand side depends only on $\alpha_L$, which depends only on $\alpha$.  
The last part follows because the maps $\Psi$ are also compatible with base change. 
\end{proof}

\begin{defn} 
We define 
\begin{align*}
\int_{\Xan} \alpha = \int_{\Xan}^{\tor} \alpha_U.
\end{align*}
\end{defn}

\begin{lem} \label{lem base change 1}
$\int_{\Xan}$ does not change when extending the base field.
\end{lem}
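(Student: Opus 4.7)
The plan is to reduce the statement to the already-proved independence of the integral from the chosen tropical chart of integration, together with the base-change invariance of tropicalization. Concretely, let $L/K$ be a complete non-archimedean field extension, let $p \colon \Xan_L \to \Xan$ denote the canonical map, and let $\alpha \in \AS^{n,n}_c(\Xan)$. I need to show $\int_{\Xan_L} p^*\alpha = \int_{\Xan} \alpha$, where $p^*\alpha$ is defined as in the last step of the proof of Theorem \ref{thm independence of integral}.

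First, I would pick a $\tor$-tropical chart of integration $(U^{\an}, \varphi, \alpha_U)$ for $\alpha$, whose existence is guaranteed by the lemma on existence of charts of integration (choosing for instance an $\A$-chart). By construction, $\int_{\Xan} \alpha = \int_{\Trop_\varphi(U)} \alpha_U$. Since closed embeddings and tropicalizations are stable under base change (cf.~\cite[Section 6, Appendix]{Payne}), $\varphi_L \colon U_L \to Y_{\Sigma,L}$ is a closed embedding of an open subset of $X_L$ into a toric variety over $L$, and the tropicalization $\Trop_{\varphi_L}(U_L)$ coincides with $\Trop_\varphi(U)$ as a polyhedral set in $N_\Sigma$. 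Moreover, by the construction used in the proof of Theorem \ref{thm independence of integral}, the pullback $p^*\alpha$ is represented by the tropical chart $(U_L^{\an}, \varphi_L, \alpha_U)$, using the same underlying tropical form $\alpha_U$, since the support condition is preserved: $\supp(p^*\alpha) \subset U_L^{\an}$.

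The key observation is that $(U_L^{\an}, \varphi_L, \alpha_U)$ is then a $\tor$-tropical chart of integration for $p^*\alpha$ on $\Xan_L$. Indeed, $p^*\alpha$ has compact support (its support is contained in $p^{-1}(\supp(\alpha))$, which is proper over $\supp(\alpha)$ and hence compact after possibly shrinking $U$ to contain $\supp(\alpha)$), and it is represented on $U_L^{\an}$ by the prescribed triple. Applying Theorem \ref{thm independence of integral} on $\Xan_L$, we obtain
\begin{align*}
\int_{\Xan_L} p^*\alpha = \int_{\Trop_{\varphi_L}(U_L)} \alpha_U = \int_{\Trop_\varphi(U)} \alpha_U = \int_{\Xan} \alpha.
\end{align*}

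The main obstacle, and the only point that requires a small justification, is verifying that $p^*\alpha$ is genuinely represented by $(U_L^{\an}, \varphi_L, \alpha_U)$ with the same tropical form, and that its support remains compact. This was implicitly used at the end of the proof of Theorem \ref{thm independence of integral}, so the present lemma is really the clean formulation of that observation. Everything else is a direct appeal to Theorem \ref{thm independence of integral} applied on both $\Xan$ and $\Xan_L$, together with the equality $\Trop_\varphi(U) = \Trop_{\varphi_L}(U_L)$.
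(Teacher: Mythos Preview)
Your proposal is correct and follows the same approach as the paper: the paper's proof is simply a one-line reference to the base-change argument already carried out at the end of Theorem \ref{thm independence of integral}, and you have spelled out exactly that argument in detail. The key ingredients---the invariance of tropicalization under base change and the observation that $(U_L^{\an}, \varphi_L, \alpha_U)$ is a chart of integration for $p^*\alpha$---are precisely what the paper uses.
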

\begin{proof}
This follows from the last part of the proof of Theorem \ref{thm independence of integral}.
\end{proof}

Chambert-Loir and Ducros also define an integration for $(n,n)$-forms with compact 
support, which we denote by $\int^{\CLD}$.

\begin{lem} \label{lem base change 2}
$\int^{\CLD}$ does not change when extending the base field.
\end{lem}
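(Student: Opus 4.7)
The plan is to reduce to Lemma \ref{lem base change 1} by showing that Chambert--Loir--Ducros' integral agrees with the integral $\int_{\Xan}$ defined in this section via the isomorphism $\Psi_{\an}$. Since $\Psi_{\an}$ is a canonical construction built from the underlying tropicalization data, it commutes with base change, so once the comparison $\int^{\CLD}_{\Xan} \alpha = \int_{\Xan} \Psi_{\an}^{-1}(\alpha)$ is established, the result follows immediately from Lemma \ref{lem base change 1}.

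First I would verify the comparison $\int^{\CLD} = \int \circ \Psi_{\an}^{-1}$. Given $\alpha \in \AS^{n,n}_{\an, c}(\Xan)$, one uses a partition of unity on $\Xan$ subordinate to finitely many tropical charts of integration in the sense of Chambert--Loir and Ducros (which exist by construction, since CLD's integration is defined via tropicalizations to tori). On each such chart $(V, \varphi_{\G_m}, \alpha_U)$ with $\varphi_{\G_m} \colon U \to \G_m^r$, CLD define $\int^{\CLD}_V \alpha = \int_{\Trop_{\varphi_{\G_m}}(U)} \alpha_U$. By the definition of $\Psi_{\an}$, the preimage $\Psi_{\an}^{-1}(\alpha)$ is locally represented by the triple $(V, \varphi_{\G_m}, \alpha_U)$ viewed as an $\A$-tropical chart (using the inclusion $\G_m^r \subset \A^r$), and the restriction $\alpha_U$ lives on the open stratum of $\Trop_{\varphi_{\G_m}}(U) \subset \T^r$. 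By Theorem \ref{thm independence of integral} (or directly the remark in its proof about passing to the open torus stratum), $\int_{\Xan} \Psi_{\an}^{-1}(\alpha)$ computed on this chart equals $\int_{\Trop_{\varphi_{\G_m}}(U)} \alpha_U$, so the two integrals coincide locally, hence globally after summing over the partition of unity.

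Next I would verify that $\Psi_{\an}$ is compatible with base change. If $p \colon X_L \to X$ denotes the canonical map induced by a complete non-archimedean extension $L/K$, then pullback of $\A$-tropical charts and their defining triples along $p$ agrees with CLD's pullback of analytic forms along $p$; this is essentially the fact that tropicalization of closed subvarieties commutes with base change (\cite[Section 6 Appendix]{Payne}), already used in the proof of Theorem \ref{thm independence of integral}. Thus $\Psi_{\an}^{-1}(\alpha|_{X_L^{\an}}) = (\Psi_{\an}^{-1}(\alpha))|_{X_L^{\an}}$ in $\AS^{n,n}_{\A, c}$.

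Combining these two steps yields
\begin{align*}
\int^{\CLD}_{X_L^{\an}} \alpha|_{X_L^{\an}} = \int_{X_L^{\an}} \Psi_{\an}^{-1}(\alpha)|_{X_L^{\an}} = \int_{\Xan} \Psi_{\an}^{-1}(\alpha) = \int^{\CLD}_{\Xan} \alpha,
\end{align*}
where the middle equality is Lemma \ref{lem base change 1}. The main obstacle is the first step: one must check carefully that CLD's integral, which is defined intrinsically on the analytic space without reference to embeddings into affine space, really does agree on a single chart with our definition through $\Psi_{\an}$. This amounts to matching CLD's local formula with the tropical integral on the open stratum, which is exactly the content of the proof of Theorem \ref{thm independence of integral} combined with Lemma \ref{lem tracking through maps}.
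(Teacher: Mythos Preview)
Your approach is circular in the paper's logical structure. The comparison $\int^{\CLD}_{\Xan}\Psi_{\an}(\alpha)=\int_{\Xan}\alpha$ that you want to establish first is exactly the theorem that immediately \emph{follows} Lemma~\ref{lem base change 2} in the paper, and the paper's proof of that theorem begins by invoking Lemmas~\ref{lem base change 1} and~\ref{lem base change 2} to reduce to the non-trivially valued case (where Gubler's argument \cite[Section 7]{Gubler} applies). So you are assuming what you need to prove. Your last paragraph hints that Theorem~\ref{thm independence of integral} together with Lemma~\ref{lem tracking through maps} already gives the comparison, but that is not so: those results compare different \emph{algebraic} tropical charts with each other, not with Chambert--Loir--Ducros' analytic integral, whose local definition involves an atlas of integration with multiplicities $d_D$ that you have suppressed.

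The paper's proof is far more direct and stays entirely inside the Chambert--Loir--Ducros framework: an atlas of integration for $\alpha$ on $\Xan$ base changes to an atlas of integration for $\alpha_L$ on $X_L^{\an}$, and neither the tropicalizations nor the multiplicities $d_D$ change under base extension, so the value of $\int^{\CLD}$ is unchanged. No comparison with $\int_{\Xan}$ or with $\Psi_{\an}$ is needed. Your strategy could in principle be salvaged if you gave a self-contained proof of the comparison that does not pass through base change, but that would amount to redoing Gubler's argument for arbitrary (possibly trivially valued) $K$, which is considerably more work than the two-line argument the paper gives.
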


\begin{proof}
The base changes of any atlas of integration in the sense of \cite{CLD} is still an atlas of integration
for the base changed form. 
Then since tropicalizations and the multiplicity $d_D$ from their definition does not change, we obtain the result. 
\end{proof}

\begin{satz}
Let $\alpha \in \AS_c^{n,n}(\Xan)$. 
Then 
\begin{align*}
\int^{\CLD}_{\Xan} \Psi_{\an}(\alpha) = \int_{\Xan} \alpha.
\end{align*}
\end{satz}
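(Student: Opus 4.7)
The plan is to reduce to the case where $K$ is non-trivially valued and algebraically closed, evaluate both integrals on a single common chart, and invoke Gubler's comparison between his tropical integration and that of Chambert--Loir and Ducros.

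First, using Lemmas \ref{lem base change 1} and \ref{lem base change 2}, one reduces to the case where $K$ is complete, non-trivially valued, and algebraically closed. This reduction is legitimate because $\Psi_{\an}$ is defined chart-wise from purely tropical data, and tropicalizations are preserved under base change, as already exploited in the proof of Theorem \ref{thm independence of integral}.

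Next, since $\G$ is admissible over such $K$ by Theorem \ref{thm G admissible}, I would view $\alpha$ as an element of $\AS^{n,n}_{\G, c}(\Xan)$ via $\Psi_{\G, \tor}^{-1}$ and pick a $\G$-tropical chart of integration $(U^{\an}, \varphi, \alpha_U)$ for it, where $\varphi \colon U \to \G_m^r$ is a closed embedding of a very affine open subset of $X$; existence of such a chart was established in the lemma preceding Theorem \ref{thm independence of integral}. Applying Theorem \ref{thm independence of integral} with $\Scal = \G$ then gives
\begin{align*}
\int_{\Xan} \alpha \;=\; \int_{\Trop_{\varphi}(U)} \alpha_U.
\end{align*}
By Lemma \ref{lem tracking through maps}, the analytic form $\Psi_{\an}(\alpha)$ is given locally around every point of $\supp(\alpha)$ by the triple $(U^{\an}, \varphi^{\an}, \alpha_U)$, which is a valid chart of integration in the CLD formalism.

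The main obstacle, and really the content of the theorem, is then to identify the CLD integral on this chart with $\int_{\Trop_{\varphi}(U)} \alpha_U$. This is what Gubler established in \cite[\S5]{Gubler}: via the Sturmfels--Tevelev formula (the same input used in Theorem \ref{thm independence of integral}), the multiplicities $d_D$ appearing in the CLD definition of the integral coincide with the tropical weights on $\Trop_{\varphi}(U)$, so that
\begin{align*}
\int^{\CLD}_{\Xan} \Psi_{\an}(\alpha) \;=\; \int_{\Trop_{\varphi}(U)} \alpha_U.
\end{align*}
Chaining this with the previous display yields the claim. No additional difficulty is expected elsewhere, since both integrations have already been arranged to be computable on a single chart once the base-change reduction is in place.
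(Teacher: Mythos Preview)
Your argument is correct and follows essentially the same route as the paper: reduce by base change to the non-trivially valued algebraically closed case, pass to the admissible family $\G$ (the paper uses the cofinal $\G_{\can}$, which makes no difference), represent $\alpha$ by a single $\G$-chart, and invoke Gubler's comparison. One small correction: the comparison of Gubler's integral with $\int^{\CLD}$ is carried out in \cite[Section~7]{Gubler}, not \S5; the Sturmfels--Tevelev input in \S5 of \emph{loc.\ cit.} establishes well-definedness of Gubler's integral (already used in Theorem~\ref{thm independence of integral}), whereas the match with the CLD multiplicities $d_D$ is a separate argument.
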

\begin{proof}
Let $L$ be a non-trivially valued non-archimedean extension of $K$. 
After replacing $K$ by $L$, $X$ by $X_L$ and $\alpha$ be $\alpha_L$, 
we may, since neither integral changes by Lemmas \ref{lem base change 1} and \ref{lem base change 2}, 
assume that $K$ is non-trivially valued. 

Then, by Corollary \ref{Cor Gm admissible}, $\G_{\can}$ is an admissible family of tropicalizations 
and thus by Theorem \ref{thm independence of integral}, we may take $\alpha \in \AS^{n,n}_{\G_{\can}, c}$, 
say given by $(V_i, \varphi_i, \alpha_i)_{i \in I}$. 
Then by Lemma \ref{lem tracking through maps} we have to show 
\begin{align*}
\int_{\Xan}^{\CLD} (V_i, \varphi_i^{\an}, \alpha_i)_{i \in I} = \int_{\Xan} (V_i, \varphi_i, \alpha_i)_{i \in I}.
\end{align*}
This was precisely shown in \cite[Section 7]{Gubler}. 
\end{proof}

\section{Tropical cohomology} \label{sect tropical cohomology Xan}

In this section we assume that $\Scal$ is a fine enough family of tropicalizations for $X$
that is cofinal in $\tor$. 
We let $R$ be a subring of $\R$. 
We will use the sheaf of tropical cochains $\Cund^{p,q}(R)$ and the constructions 
from Section \ref{sect tropical cohomology}. 

\begin{defn} \label{defn analytic cocycles}
Let $V$ be a open subset of $\Xan$. 
An element of $\Cund^{p,q}_{\Scal}(V, R)$ is given by a family 
$(V_i, \varphi_i, \eta_i)_{i \in I}$
such that:
\begin{enumerate}
\item
The $V_i$ cover $V$, i.e. $V = \bigcup_{i \in I} V_i$. 
\item
For each $i \in I$ the pair $(V_i, \varphi_i)$ is an $\Scal$-tropical chart.
\item
For each $i \in I$ we have $\eta_i \in \Cund^{p,q}(\trop_{\varphi_i}(V_i), R)$. 
\item
For all $i, j \in I$ there exist $\Scal$-tropical subcharts 
$(V_{ijl}, \varphi_{ijl})_{l \in L}$ that cover $V_i \cap V_j$ such that
\begin{align*}
\Trop(\varphi_i, \varphi_{ijl})^* \eta_i = 
\Trop(\varphi_j, \varphi_{ijl})^* \eta_j \in \Cund^{p,q}(\trop_{\varphi_{ijl}}(V_{ijl}), R).
\end{align*} 
\end{enumerate}
Another such family $(V_j, \varphi_j, \eta_j)_{j \in J}$ 
defines the same form if and only if their union $(V_i, \varphi_i, \eta_i)_{i \in I \cup J}$
still satisfies iv). 
\end{defn}

For each $p$ we obtain a complex of sheaves $(\Cund^{p,\bullet}_{\Scal}(R), \partial)$ on $\Xan$.
If $\Scal = \tor$ we will drop the subscript and write $\Cund^{p,q}(R)  := \Cund^{p,q}_{\tor}(R)$.

\begin{bem}
It follows the same way as in Section \ref{sect superforms} for $\AS^{p,q}$ that $\Cund^{p,q}_{\Scal}$ 
is isomorphic to $\Cund^{p,q}_{\Scal'}$ when $\Scal'$ is final or cofinal for $\Scal$.

The author does not know whether one gets isomorphic sheaves when one 
applies Definition \ref{defn analytic cocycles} with for example $\Scal = \G$. 
The missing piece here is that for differential forms on tropical toric varieties, 
the condition of compatibility requires every forms to locally be a pullback of a forms 
from a tropical torus (i.e.~$\R^n$). 
The same is not true for tropical cocycles, hence the arguments form Theorem \ref{thm G admissible} 
do not work. 
\end{bem}

\begin{lem} \label{lem check zero locally cochains}
Let $\eta \in \Cund^{p,q}(V, R)$ be given by $(V, \varphi, \eta')$. 
Then $\eta = 0$ if and only if $\eta' = 0$. 
\end{lem}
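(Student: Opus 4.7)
The \emph{if} direction is immediate, since $(V, \varphi, 0)$ represents the zero section. For the converse, suppose $\eta = 0$ in $\Cund^{p,q}_{\Scal}(V, R)$. Applying condition iv) of Definition \ref{defn analytic cocycles} to $(V, \varphi, \eta')$ together with the zero representative $(V, \varphi, 0)$ furnishes a cover of $V$ by $\Scal$-tropical subcharts $(V_l, \varphi_l)_{l \in L}$ of $(V, \varphi)$ with
\begin{align*}
\Trop(\varphi, \varphi_l)^* \eta' = 0 \in \Cund^{p,q}(\trop_{\varphi_l}(V_l), R)
\end{align*}
for every $l$. The images $\trop_\varphi(V_l)$ cover $\trop_\varphi(V)$, and since $\Cund^{p,q}(-, R)$ is a sheaf on $\trop_\varphi(V)$, it suffices to prove $\eta'|_{\trop_\varphi(V_l)} = 0$ for each fixed $l$. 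The plan is to evaluate $\eta'$ on an arbitrary smooth tropical cell $v \otimes \delta$ on $\trop_\varphi(V_l)$ and produce a lift $\tilde v \otimes \tilde \delta$ on $\trop_{\varphi_l}(V_l)$ satisfying $\Trop(\varphi, \varphi_l)_*(\tilde v \otimes \tilde \delta) = v \otimes \delta$, from which
\begin{align*}
\eta'(v \otimes \delta) = \bigl(\Trop(\varphi, \varphi_l)^* \eta'\bigr)(\tilde v \otimes \tilde \delta) = 0
\end{align*}
follows.

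The main obstacle is constructing this lift. The restriction of $\Trop(\varphi, \varphi_l)$ to $\trop_{\varphi_l}(V_l)$ is a surjective, stratum-wise affine map onto $\trop_\varphi(V_l)$, because both sides tropicalize $V_l^{\an}$ under closed embeddings linked by the affine toric morphism $\psi_{\varphi, \varphi_l}$ of Definition \ref{defn family}. After subdivision I would assume $\delta$ lands in the relative interior of a single polyhedron $\tau$ of $\trop_\varphi(V_l)$ and $v \in \Fbb_p^R(\tau)$. Surjectivity provides a polyhedron $\tilde \tau$ of $\trop_{\varphi_l}(V_l)$ with $\Trop(\varphi, \varphi_l)(\tilde \tau) = \tau$; since the induced map $\tilde \tau \to \tau$ is an affine surjection of polyhedra it admits a continuous section, lifting $\delta$ to a smooth stratified simplex $\tilde \delta$ in $\tilde \tau$. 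The multitangent vector $v$ lifts because the induced linear map $\Linear(\tilde \tau) \to \Linear(\tau)$ is surjective, being the linear part of an affine toric morphism, hence so is the induced map on the relevant exterior-power lattices. Compatibility at boundary strata follows by the same stratum-wise surjectivity, in the spirit of the argument for superforms behind Lemma \ref{lem check one chart} and \cite[Lemme 3.2.2]{CLD}.
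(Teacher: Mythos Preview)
Your strategy---reduce to injectivity of $\Trop(\varphi,\varphi_l)^*$ on cochains by lifting chains---is the right one and is essentially the dual of the superforms argument the paper refers to. The simplex lift is fine. The gap is in lifting the multitangent vector. Recall that $\Fbb_p^R(\tau)$ is \emph{not} built from $\Linear(\tau)$ alone: by definition it is $\bigl(\sum_{\sigma \supset \tau} \Lambda^p \Linear(\sigma)\bigr) \cap \Lambda^p R^n$, the sum running over \emph{all} faces $\sigma$ of the tropical variety containing $\tau$. Surjectivity of $\Linear(\tilde\tau) \to \Linear(\tau)$ therefore only lifts the contribution of $\tau$ itself, not those of the larger faces, so your justification does not cover a general $v \in \Fbb_p^R(\tau)$.

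What is actually needed is that for every maximal face $\sigma \supset \tau$ of $\trop_\varphi(V_l)$ there exists a maximal face $\tilde\sigma \supset \tilde\tau$ of $\trop_{\varphi_l}(V_l)$ mapping onto $\sigma$ with surjective linear part. This is precisely the geometric input behind the superforms proof in \cite[Lemme~3.2.2]{CLD} and Lemma~\ref{lem check one chart}: every maximal face downstairs is hit by a maximal face upstairs. You must also choose $\tilde\tau$ with care; an arbitrary face over $\tau$ will not do. For instance, when a one-dimensional face of $\trop_{\varphi_l}(V_l)$ collapses to a vertex $\tau$ of $\trop_\varphi(V_l)$, taking that collapsing face as $\tilde\tau$ gives a star with a single maximal face, which cannot surject onto the multi-branched star of $\tau$. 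Once the right $\tilde\tau$ is chosen and the maximal-face fact is invoked, one gets surjectivity of $\Fbb_p^\R(\tilde\tau) \to \Fbb_p^\R(\tau)$; the passage to general $R$ follows since $\Fbb_p^R = \Fbb_p^\Z \otimes_\Z R$ is a free $R$-module and an $R$-linear functional vanishing on a subset with full $\R$-span must vanish. So the paper's claim that the forms proof carries over ``word for word'' is accurate in the sense that the same key lemma about maximal faces does the work---but that lemma is exactly what your sketch omits.
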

\begin{proof}
The proof for forms \cite[Lemma 3.2.12]{JellThesis} works word for word. 
\end{proof}

\begin{prop} \label{prop comparison coefficients}
Comparing with tropical cohomology, we obtain
\begin{align*}
\Cund^{p,q}_c(\Xan, R) = \varinjlim_{\varphi \in \Scal} \Cund^{p,q}_{\trop, c}(\Trop_{\varphi}(X), R)
\end{align*}
and
\begin{align*}
\HH^{p,q}_{\trop, c} (\Xan, R) = \varinjlim_{\varphi \in \Scal} \HH^{p,q}_{\trop, c}(\Trop_{\varphi}(X), R).
\end{align*}
\end{prop}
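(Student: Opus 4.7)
The plan is to mimic the argument used for differential forms in Theorems \ref{thm one chart}, \ref{thm forms are limit}, and \ref{thm cohomology limit}, with Lemma \ref{lem check zero locally cochains} playing the role of Lemma \ref{Lem check zero locally 2}. Throughout I will implicitly restrict to the global charts inside $\Scal$, as only these make sense on the right-hand side $\varinjlim_{\varphi \in \Scal} \Cund^{p,q}_{\trop,c}(\Trop_{\varphi}(X), R)$.

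First I would establish the cochain analog of Theorem \ref{thm one chart}: if a section $\eta \in \Cund^{p,q}_{\Scal}(V, R)$ is represented by a finite family $(V_i, \varphi_i, \eta_i)_{i \in I}$ with each $\varphi_i \colon X \to Y_{\Sigma_i}$ in $\Scal_{\map}$, then one may pass to a common global refinement $\varphi$, pull back each $\eta_i$ along $\Trop(\varphi_i, \varphi)$, and use Lemma \ref{lem check zero locally cochains} to verify that on each overlap $V_i \cap V_j$ the pulled-back cochains agree. This produces a single representative $(V, \varphi, \eta')$ for $\eta$.

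Next I would prove the cochain analog of Theorem \ref{thm forms are limit}: for any open $V \subset \Xan$ admitting an $\Scal$-tropical chart, the natural map
\begin{align*}
\varinjlim \Cund^{p,q}_{\trop, c}(\trop_{\varphi}(V), R) \to \Cund^{p,q}_{\Scal, c}(V, R),
\end{align*}
with the colimit running over all $\Scal$-tropical charts $(V, \varphi)$ of $V$, is an isomorphism. Injectivity is immediate from Lemma \ref{lem check zero locally cochains}. For surjectivity, take $\eta \in \Cund^{p,q}_{\Scal, c}(V, R)$ given by $(V_i, \varphi_i, \eta_i)_{i \in I}$, pick a finite $I' \subset I$ whose $V_i$ cover $\supp(\eta)$, apply the one-chart step above to represent $\eta|_{V'}$ (where $V' = \bigcup_{i \in I'} V_i$) by a single chart $(V', \varphi', \eta')$, and refine so that $(V', \varphi')$ becomes a subchart of a given $(V, \varphi)$. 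The analog of \cite[Corollaire 3.2.3]{CLD}, supplied by Lemma \ref{lem check zero locally cochains}, then identifies $\supp(\eta')$ with $\trop_{\varphi'}(\supp(\eta))$, which is compact; extending $\eta'$ by zero to $\trop_{\varphi'}(V)$ produces the desired preimage. Specializing to $V = \Xan$ and restricting cofinally to global $\varphi$ gives the first claimed equality.

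The second equality then follows formally: filtered colimits of complexes of $R$-modules are exact, so cohomology commutes with $\varinjlim$, and applying $\HH^q$ to the isomorphism at the level of complexes $\Cund^{p,\bullet}_c$ yields the asserted identity for $\HH^{p,q}_{\trop, c}$. The hard part will be the extension-by-zero step: unlike differential forms, cochains on a tropical toric variety are not forced by any local compatibility condition to be pullbacks from an open stratum, so verifying the support identity $\supp(\eta') = \trop_{\varphi'}(\supp(\eta))$ needs a careful inspection of condition iv) in Definition \ref{defn analytic cocycles} and is the place where this proof could most plausibly break if the analog of Lemma \ref{lem check zero locally cochains} were not available.
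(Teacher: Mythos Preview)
Your proposal is correct and follows essentially the same approach as the paper: the paper's proof is a one-line reference stating that the result follows from Lemma \ref{lem check zero locally cochains} in exactly the way Theorems \ref{thm forms are limit} and \ref{thm cohomology limit} follow from Lemma \ref{lem check one chart}, which is precisely the transplant you carry out in detail. Your added caution about the support identity is reasonable but, as you yourself note, is resolved by Lemma \ref{lem check zero locally cochains}, so there is no gap.
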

\begin{proof}
This follows from Lemma \ref{lem check zero locally cochains} in the same way as for forms in 
Theorems \ref{thm forms are limit} and \ref{thm cohomology limit} follow from Lemma \ref{lem check one chart}.
\end{proof}

\begin{defn}
The maps $\dR$ defined in Section \ref{sect tropical cohomology} 
define maps
\begin{align*}
\dR \colon \AS^{p,q} \to \Cund^{p,q}(\R)
\end{align*}
the induces a morphism of complexes of sheaves. 
\end{defn}

\begin{defn}
The cohomology 
\begin{align*}
\HH^{p,q}_{\trop}(\Xan, R) := \HH^q(\Cund^{p,\bullet}(\Xan, R), \partial)
\end{align*}
is called \emph{tropical cohomology with coefficients in $R$} of $\Xan$. 
Similarly 
\begin{align*}
\HH^{p,q}_{\trop, c}(\Xan, R) := \HH^q(\Cund_c^{p,\bullet}(\Xan, R), \partial)
\end{align*}
is called \emph{tropical cohomology with coefficients in $R$ with compact support} of $\Xan$.
\end{defn}

\begin{defn}
We denote by $\FS^p_R := \ker (\partial \colon \Cund^{p,0}(R) \to \Cund^{p,1}(R))$.
\end{defn}

\begin{lem} \label{lem complex exact}
The complex 
\begin{align*}
0 \to \FS^p_R \to \Cund^{p,0}(R) \to \Cund^{p,1}(R) \to \dots \to \Cund^{p,n}(R) \to 0
\end{align*}
is exact.  
\end{lem}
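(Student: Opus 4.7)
The plan is to verify exactness on stalks. Fix $x \in \Xan$. My first step is to establish the local identification
\[
(\Cund^{p,q}(R))_x \;=\; \varinjlim_{\varphi} (\Cund^{p,q}(R))_{\trop_\varphi(x)},
\]
where the colimit runs over all closed embeddings $\varphi \colon U \to Y_\Sigma$ of open neighborhoods $U$ of $x$ into toric varieties, and on the right $(\Cund^{p,q}(R))_{\trop_\varphi(x)}$ denotes the stalk of the tropical sheaf from Section \ref{sect tropical cohomology}. Since $\tor$ is fine enough, tropical charts $(V, \varphi)$ with $x \in V$ form a basis at $x$, and the common subchart condition ensures that this system is filtered. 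The formula itself follows from the gluing definition (Definition \ref{defn analytic cocycles}): after restricting to a sufficiently small tropical chart, any germ at $x$ admits a single-triple representative $(V, \varphi, \eta)$, and two such triples represent the same germ if and only if they become equal after pullback to a common subchart. This is the exact analogue of the argument for forms in Theorem \ref{thm forms are limit}, with Lemma \ref{lem check zero locally cochains} replacing Lemma \ref{lem check one chart}.

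Second, since kernels commute with filtered colimits, the same description passes to
\[
(\FS^p_R)_x \;=\; \varinjlim_{\varphi} (\FS^p_R)_{\trop_\varphi(x)}.
\]
Now on each tropical variety $\Trop_\varphi(X)$, the complex
\[
0 \to \FS^p_R \to \Cund^{p,0}(R) \to \dots \to \Cund^{p,n}(R) \to 0
\]
is exact as a complex of sheaves: the inclusion $\FS^p_R \hookrightarrow \Cund^{p,\bullet}(R)$ is a quasi-isomorphism by \cite{JSS} (recalled in Section \ref{sect tropical cohomology}), and $\Cund^{p,q}(R)$ vanishes for $q > \dim \Trop_\varphi(X) = n$. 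In particular it is exact at the stalk at $\trop_\varphi(x)$. Taking $\varinjlim_\varphi$ and using that filtered colimits are exact in the category of $R$-modules gives the desired exactness at $x \in \Xan$, which completes the proof.

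The main obstacle is the first step, namely making the stalk formula precise starting from the gluing definition of $\Cund^{p,q}_\tor(R)$. It is not a formal consequence of Proposition \ref{prop comparison coefficients}, which treats compactly supported global sections rather than stalks, but the key ingredients are the same: the fine enough property of $\tor$ (giving a filtered basis of tropical charts at $x$ and common subcharts) together with Lemma \ref{lem check zero locally cochains} (allowing us to identify sections locally with single-triple data). Once this formula is in hand, everything else is formal: exactness on the tropical side from \cite{JSS} plus exactness of filtered colimits.
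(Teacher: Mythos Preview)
Your proposal is correct and follows essentially the same approach as the paper: the paper's proof simply states that exactness holds on the tropical side by \cite[Proposition 3.11 \& Lemma 3.14]{JSS} and is then ``automatically true on the analytic side using the definitions (cf.~the proof for forms \cite[Theorem 4.5]{Jell}).'' Your stalk-as-filtered-colimit argument, together with exactness of filtered colimits, is precisely the content of that ``automatic'' passage, spelled out in detail.
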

\begin{proof}
Exactness on the tropical side is true by \cite[Proposition 3.11 \& and Lemma 3.14]{JSS} 
(with real coefficients, but the proof goes through here). 
It is then automatically true on the analytic side using the definitions 
(cf.~the proof for forms \cite[Theorem 4.5]{Jell}). 
\end{proof}

\begin{bem}
The sheaves $\AS^{p,q}_{\Scal}$ admit partitions of unity, 
which can be shown the same way as it was shown by Gubler for 
$\Scal = \G$ in \cite[Proposition 5.10]{Gubler}.
This proof however uses the $\R$-structure of those sheaves. 

The sheaves $\Cund^{p,q}(R)$ on a tropical variety (as defined in Section \ref{sect tropical cohomology}) 
are flasque sheaves \cite[Lemma 3.14]{JSS}, hence in particular acyclic. 

However, it is not clear whether this property also holds for $\Cund^{p,q}(R)$
on the analytic space $\Xan$ in general. 
We will prove some partial results in the next Lemma.
\end{bem}

Recall condition $(\dagger)$ from Definition \ref{condition dagger}.

\begin{lem} \label{lem cocycles acyclic}
Assume that $R = \R$ or that $X$ satisfies condition $(\dagger)$.
Then the sheaves $\Cund^{p,q}(R)$ are acyclic with respect to the 
functor of global sections as well as global sections with compact support. 
\end{lem}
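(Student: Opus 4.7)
The plan is to handle the two hypotheses separately.

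For $R = \R$, the plan is to exhibit $\Cund^{p,q}(\R)$ as a fine sheaf on the paracompact space $\Xan$ via an $\AS^{0,0}$-module structure. On any $\Scal$-tropical chart $(V, \varphi)$, if $f \in \AS^{0,0}(V)$ is locally $\trop_\varphi^* \tilde f$ for $\tilde f \in C^\infty(\trop_\varphi(V))$ and a section $\eta$ is represented by a tropical cochain $\eta_\varphi \in \Cund^{p,q}(\trop_\varphi(V), \R)$, I would define $(f \cdot \eta)_\varphi := \tilde f \cdot \eta_\varphi$, where the right-hand side is scalar multiplication of the $\R$-valued coefficient functional by the smooth function. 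Compatibility under refinement is automatic because $\Trop(\varphi, \varphi')^*$ commutes with such multiplication on both factors. Since $\AS^{0,0}$ admits partitions of unity on $\Xan$ (by the argument in the proof of \cite[Proposition 5.10]{Gubler}), these transfer to $\Cund^{p,q}(\R)$, making it fine, hence soft on the paracompact space $\Xan$, and therefore acyclic for both $\Gamma$ and $\Gamma_c$.

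For $X$ satisfying condition $(\dagger)$, the plan is to use the cofinal subfamily $\tor_{\glob}$ and reduce acyclicity to the flasqueness of $\Cund^{p,q}$ on tropical varieties established in \cite[Lemma 3.14]{JSS}. By the cochain analog of Lemma \ref{lem admissible} (which follows from Lemma \ref{lem check zero locally cochains} as in the case of forms) I may replace $\Scal$ by $\tor_{\glob}$. The family $\tor_{\glob}$ is directed via product embeddings $\varphi_1 \times \varphi_2$, and the sheaf-level analog of Proposition \ref{prop comparison coefficients} identifies
\begin{align*}
\Cund^{p,q}(R) \;\cong\; \varinjlim_{\varphi \in \tor_{\glob}} \trop_\varphi^{-1}\Cund^{p,q}(R)
\end{align*}
as sheaves on $\Xan$. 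Each $\trop_\varphi^{-1}\Cund^{p,q}(R)$ is flasque: any section over an open $V \subset \Xan$ is represented by a germ of tropical section over some neighborhood of $\trop_\varphi(V)$ in $\Trop_\varphi(X)$, which extends flasquely to all of $\Trop_\varphi(X)$ by \cite[Lemma 3.14]{JSS}, and the extension pulls back to $\Xan$. On the paracompact Hausdorff space $\Xan$, flasque sheaves are soft, and filtered colimits of soft sheaves are soft (both $\Gamma$ and $\Gamma_c$ commute with filtered colimits of sheaves on such spaces). Thus $\Cund^{p,q}(R)$ is soft, giving acyclicity for $\Gamma$ and $\Gamma_c$.

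The main obstacle is in the second case, specifically the rigorous verification that $\trop_\varphi^{-1}\Cund^{p,q}(R)$ is flasque and that the sheaf-level colimit identification holds. Flasqueness is not in general preserved under inverse image, so one must use the properness of $\trop_\varphi$ together with the fact that sections of the inverse image sheaf are, after sheafification, germs of tropical sections — allowing a flasque extension on the tropical side to lift to $\Xan$. The colimit identification requires a version of Proposition \ref{prop comparison coefficients} without compact supports, which follows by the same Lemma \ref{lem check zero locally cochains} argument because the directedness of $\tor_{\glob}$ (via products) allows one to pass between charts with different global embeddings.
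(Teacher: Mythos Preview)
Your argument for $R = \R$ is essentially the paper's: the paper transfers partitions of unity from $\AS^{0,0}$ to $\Cund^{0,0}(\R)$ via $\dR$ and then uses the $\Cund^{0,0}(\R)$-module structure (cap product), while you write down an $\AS^{0,0}$-module structure directly. These amount to the same thing.

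For the $(\dagger)$ case your approach is genuinely different from the paper's, and the gap you flag is real and not closed by what you wrote. The sentence ``any section over an open $V \subset \Xan$ is represented by a germ of tropical section over some neighborhood of $\trop_\varphi(V)$'' describes the \emph{presheaf} inverse image $f^+\mathcal{F}$, not the sheaf $f^{-1}\mathcal{F}$. After sheafification a section over $V$ is only a compatible family of local germs, and there is no reason these glue to a single section of $\Cund^{p,q}(R)$ on a neighborhood of $\trop_\varphi(V)$; in particular the fibres of $\trop_\varphi$ need not be connected, so two distinct tropical sections can have the same pullback, obstructing the gluing. Flasqueness (and softness) are simply not preserved by $f^{-1}$ in general --- the constant sheaf $\underline{\Z}$ on a point pulls back to a non-soft sheaf on $[0,1]$ --- and properness of $\trop_\varphi$ does not rescue this. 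So neither the flasqueness of the individual terms nor the ``filtered colimit of soft sheaves is soft'' step is available as stated.

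The paper avoids this entirely. It shows $\Cund^{p,q}(R)$ is c-soft directly: a section over a compact $K \subset \Xan$ is defined by finitely many $\tor_{\glob}$-charts, hence by a single chart $(\Xan,\varphi,\eta)$ via the cochain analogue of Theorem~\ref{thm one chart}; then flasqueness of $\Cund^{p,q}(R)$ on $\Trop_\varphi(X)$ extends $\eta$ globally. This is exactly the ``one-chart'' mechanism you use implicitly in the colimit identification, but applied to sections over compacts rather than to arbitrary opens --- which is precisely where the finiteness makes the argument go through. Acyclicity for $\Gamma$ then follows from c-softness together with a countable compact exhaustion of $\Xan$. If you want to salvage your colimit strategy, you would effectively have to reprove this c-softness statement for each $\trop_\varphi^{-1}\Cund^{p,q}(R)$, at which point you might as well argue on $\Cund^{p,q}(R)$ itself.
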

\begin{proof}
Using the map $\dR \colon \AS^{0,0} \to \Cund^{0,0}(\R)$,  
we see that $\Cund^{0,0}(\R)$ admits partitions of unity. 
Hence $\Cund^{0,0}(\R)$ is a fine sheaf and since $\Cund^{p,q}(\R)$
is a $\Cund^{0,0}(\R)$-module (via the cap product) 
we see that $\Cund^{p,q}(\R)$ is also a fine sheaf. 

In general, if $X$ satisfied condition ($\dagger$)
then $\tor_{\glob}$ is a global family of tropicalizations that 
is cofinal in $\A$, which is final in $\tor$. 
hence $\Cund_{\tor_{\glob}}^{p,q}(R) \cong \Cund^{p,q}(R)$.
Any section of $\Cund_{\tor_{\glob}}^{p,q}(R)$ 
that is defined by finitely many charts $(V_i, \varphi_i, \eta_i)$ 
can be defined by a single chart. 
This can be shown the same way as for forms in Theorem \ref{thm one chart}. 

Since any section over a compact subset of $\Xan$ is 
defined by finitely many charts, each such section
can be defined by one chart $(\Xan, \varphi, \eta)$. 
Now since the sheaf $\Cund^{p,q}(R)$ on $\Trop_{\varphi}(X)$
is flasque, this section can be extended to a global section. 
This shows that the sheaf $\Cund^{p,q}(R)$ on $\Xan$ is c-soft in the sense of 
\cite[Definition 2.5.5]{KS}, 
which implies that it is acyclic for the functor of global sections with compact support 
\cite[Proposition 2.58 \& Corollary 2.5.9]{KS}. 

Since $\Cund^{p,q}(R)$ is c-soft, to show that it is acyclic for the functor of global sections, 
we have to show that $\Xan$ admits a countable cover by compact sets \cite[Proposition 2.5.10]{KS}. 
Since $\A^{n, \an}$ is covered by countably many discs, this holds if $X$ is affine. 
Since general $X$ is covered by finitely many affine varieties, the claim follows. 
\end{proof}

\begin{kor} \label{kor sheaf cohomology}
If $R = \R$ or if $X$ satisfies condition $(\dagger)$ we have 
\begin{align*}
\HH^{p,q}_{\trop}(\Xan, R) = \HH^{q}(\Xan, \FS^p_{R}) \text{ and } 
\HH^{p,q}_{\trop,c}(\Xan, R) = \HH^{q}_c(\Xan, \FS^p_{R}).
\end{align*}
Further, we have
\begin{align*}
\HH^{p,q}_{\trop}(\Xan, R) =\HH^{p,q}_{\trop}(\Xan, \Z) \otimes R.  \text{ and } 
\HH^{p,q}_{\trop,c}(\Xan, R) =\HH^{p,q}_{\trop,c}(\Xan, \Z) \otimes R.
\end{align*}

\end{kor}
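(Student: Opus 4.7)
The plan is to combine Lemmas~\ref{lem complex exact} and~\ref{lem cocycles acyclic}. By Lemma~\ref{lem complex exact}, the complex $\Cund^{p,\bullet}(R)$ is an exact resolution of $\FS^p_R$. Under either hypothesis ($R = \R$ or $X$ satisfies condition $(\dagger)$), Lemma~\ref{lem cocycles acyclic} says the terms $\Cund^{p,q}(R)$ are acyclic for both $\Gamma(\Xan, -)$ and $\Gamma_c(\Xan, -)$. The standard homological identification of sheaf cohomology via an acyclic resolution will then give the first pair of identifications
\[
\HH^q(\Xan, \FS^p_R) = \HH^q(\Gamma(\Xan, \Cund^{p,\bullet}(R))) = \HH^{p,q}_{\trop}(\Xan, R),
\]
and analogously in the compactly supported case.

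For the coefficient-change statement I would begin from $\FS^p_R = \FS^p_\Z \otimes_\Z R$. On a tropical variety this equality was recorded directly after the definition of $\FS^p_R$ in Section~\ref{sect tropical cohomology}, and it extends to $\Xan$ because $\FS^p_R$ is defined locally via tropical charts and the isomorphism is natural in the chart. Since $R \subseteq \R$ is torsion-free, it is flat over $\Z$. Flatness preserves the exactness of $\Cund^{p,\bullet}(\Z)$ upon tensoring with $R$, giving a canonical isomorphism $\Cund^{p,\bullet}(\Z) \otimes_\Z R \cong \Cund^{p,\bullet}(R)$ of acyclic resolutions of $\FS^p_R$.

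The main obstacle is that $(-)\otimes_\Z R$ does not in general commute with $\Gamma(\Xan, -)$ or $\Gamma_c(\Xan, -)$ even for acyclic sheaves. I would handle this by passing to the derived category: flatness of $R$ over $\Z$ yields
\[
R\Gamma(\Xan, \FS^p_\Z) \otimes^{L}_\Z R \simeq R\Gamma(\Xan, \FS^p_\Z \otimes^{L}_\Z R) = R\Gamma(\Xan, \FS^p_R),
\]
and analogously for $R\Gamma_c$, where the first isomorphism is of universal-coefficient type and the obstructing Tor terms vanish because $R$ is flat. Taking $\HH^q$ and combining with the first part of the corollary then produces $\HH^{p,q}_{\trop}(\Xan, R) = \HH^{p,q}_{\trop}(\Xan, \Z) \otimes_\Z R$ together with its compactly supported analogue, completing the plan.
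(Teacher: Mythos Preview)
Your proof is correct and follows the paper's route: both parts rest on Lemmas~\ref{lem complex exact} and~\ref{lem cocycles acyclic} together with flatness of $R$ over $\Z$ and $\FS^p_R = \FS^p_\Z \otimes_\Z R$, and you are simply more explicit than the paper about the universal-coefficient step. One caveat: the intermediate isomorphism $\Cund^{p,\bullet}(\Z)\otimes_\Z R \cong \Cund^{p,\bullet}(R)$ is doubtful at the level of sections (tropical cochains are $\Hom$'s out of infinite-rank groups, so tensoring need not commute with $\Hom$), but it is also unnecessary, since your derived-category argument uses only $\FS^p_R \cong \FS^p_\Z\otimes_\Z R$.
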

\begin{proof}
This follows from Lemmas \ref{lem complex exact} and \ref{lem cocycles acyclic}, 
the fact that since $R$ is torsion free, thus a flat $\Z$-module 
and $\FS^{p}_R = \FS^p_\Z \otimes \Z$. 
\end{proof}

\begin{satz} [Tropical analytic de Rham theorem] \label{Tropical analytic de Rham theorem}
There exist isomorphisms
\begin{align*}
\HH^{p,q}(\Xan) \cong \HH^{p,q}_{\trop}(\Xan, \R) \text{ and } 
\HH^{p,q}_c(\Xan) \cong \HH^{p,q}_{\trop, c}(\Xan, \R).
\end{align*}
that are induced by the de Rham morphism on the tropical level,
as defined in Remark \ref{rem deRham}.
\end{satz}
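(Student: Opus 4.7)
The plan is to realize both sides as sheaf cohomology of $\FS^p_{\R}$ on $\Xan$, and then observe that the de Rham map is compatible with both identifications.

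First, by Corollary \ref{kor sheaf cohomology} applied with $R = \R$, we already have
\begin{align*}
\HH^{p,q}_{\trop}(\Xan, \R) = \HH^{q}(\Xan, \FS^p_{\R}) \quad\text{and}\quad \HH^{p,q}_{\trop, c}(\Xan, \R) = \HH^{q}_c(\Xan, \FS^p_{\R}).
\end{align*}
So the theorem reduces to showing that the complex $(\AS^{p,\bullet}, d'')$ is an acyclic resolution of $\FS^p_{\R}$ on $\Xan$, and that the morphism of complexes $\dR$ is compatible with the respective inclusions of $\FS^p_{\R}$.

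For exactness of $\FS^p_{\R} \to \AS^{p,\bullet}$ on $\Xan$, I would argue locally: given $x \in \Xan$ and a $d''$-closed germ of $\AS^{p,q}$ at $x$, shrink so that it is represented by a single $\Scal$-tropical chart $(V,\varphi,\alpha')$ with $\alpha' \in \AS^{p,q}(\trop_\varphi(V))$. By Lemma \ref{lem check one chart}, $d''\alpha' = 0$ on $\trop_\varphi(V)$. The tropical Poincaré lemma (the quasi-isomorphism $\FS^p_{\R} \to \AS^{p,\bullet}$ on tropical subvarieties of tropical toric varieties, as recorded in Section \ref{sect tropical cohomology}) then gives $\alpha' = d''\beta'$ on a neighbourhood, so $(V,\varphi,\beta')$ defines a local primitive of $\alpha$. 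For acyclicity of $\AS^{p,q}$ on $\Xan$, the sheaves admit partitions of unity (as noted in the remark preceding Lemma \ref{lem cocycles acyclic}), so they are fine, hence c-soft. Since $\Xan$ is covered by countably many compact sets (reducing via a finite affine cover to the analytification of affine space), fine sheaves on $\Xan$ are acyclic both for global sections and for global sections with compact support by \cite[Proposition 2.5.8, 2.5.9, 2.5.10]{KS}. Therefore
\begin{align*}
\HH^{p,q}(\Xan) = \HH^{q}(\Xan, \FS^p_{\R}) \quad\text{and}\quad \HH^{p,q}_c(\Xan) = \HH^{q}_c(\Xan, \FS^p_{\R}).
\end{align*}

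Combining with the identification above, we obtain the desired abstract isomorphisms. To see that these are realised by $\dR$, recall from Remark \ref{rem deRham} that on each tropical chart $\dR \colon \AS^{p,\bullet} \to \Cund^{p,\bullet}$ is a morphism of complexes respecting the natural inclusions of $\FS^p_{\R}$ in degree zero. By the sheaf-theoretic definition of $\AS^{p,q}$ and $\Cund^{p,q}(\R)$ on $\Xan$, this assembles into a morphism of complexes of sheaves $\dR \colon \AS^{p,\bullet} \to \Cund^{p,\bullet}(\R)$ that similarly lifts the identity on $\FS^p_{\R}$. The standard comparison of acyclic resolutions (both $\AS^{p,\bullet}$ and $\Cund^{p,\bullet}(\R)$ being acyclic resolutions of $\FS^p_{\R}$ by the above and by Lemmas \ref{lem complex exact} and \ref{lem cocycles acyclic}) then shows that the map induced by $\dR$ on cohomology is precisely the isomorphism obtained above. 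The same argument with global sections replaced by global sections with compact support gives the statement for $\HH^{p,q}_c$.

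The main obstacle is the Poincaré lemma step, in particular the behaviour near boundary strata of tropical toric varieties; once the tropical Poincaré lemma from \cite{JSS} is invoked this passes to $\Xan$ essentially formally, but without it the local primitives may fail to exist compatibly with the condition of compatibility at the boundary.
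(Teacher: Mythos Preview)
Your proof is correct and follows the same strategy as the paper: both complexes $\AS^{p,\bullet}$ and $\Cund^{p,\bullet}(\R)$ are acyclic resolutions of $\FS^p_{\R}$ on $\Xan$, the de Rham morphism is a map of resolutions over the identity on $\FS^p_{\R}$, and the standard comparison yields the isomorphism. The paper's proof is considerably terser, simply asserting the commutative triangle of acyclic resolutions (implicitly invoking \cite[Theorem~4.5]{Jell} for the Poincar\'e lemma on $\Xan$ and the partition-of-unity remark for acyclicity of $\AS^{p,q}$), whereas you spell out both ingredients explicitly; your final paragraph about ``the main obstacle'' is commentary rather than proof and could be omitted.
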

\begin{proof}
We have a map
\begin{align*}
\dR \colon \AS^{p,q} \to \Cund^{p,q}(\R)
\end{align*}
that is locally given by using the de Rham map on the tropical side constructed in Remark \ref{rem deRham}.
This makes the following diagram commutative:
\begin{align*}
\begin{xy}
\xymatrix{
&&\AS^{p, \bullet} \ar[dd]^{\dR}   \\
\FS^p \ar[urr] \ar[drr] \\
&&\underline{C}^{p,\bullet}. 
}
\end{xy}
\end{align*}
This is now a commutative diagram of acyclic resolutions of $\FS^p$, 
which proves the theorem. 
\end{proof}

\section{Wave and monodromy operators} \label{sect wave and monodromy operator}

Since both the monodromy operator $M$ on superforms and the wave operator defined 
in Section \ref{sect tropical cohomology} on
tropical cochains commute with pullbacks along affine maps on tropical toric varieties, 
we obtain maps
\begin{align*}
M &\colon \HH^{p,q}(\Xan) \to \HH^{p-1,q+1}(\Xan)  \text{ and}\\
W &\colon \HH^{p,q}_{\trop}(\Xan, \R) \to \HH^{p-1,q+1}_{\trop}(\Xan, \R).
\end{align*}

\begin{satz} \label{kor wave mondromy analytic}
The wave and the monodromy operator agree on cohomology up to sign by virtue of the isomorphism $\dR$, 
meaning that the diagram
\begin{align*}
\begin{xy}
\xymatrix{
\HH^{p,q}(\Xan) \ar[rr]^{(-1)^{p-1} M} \ar[d]_{\dR} && \HH^{p-1,q+1}(\Xan) \ar[d]^{\dR} \\
\HH^{p,q}_{\trop}(\Xan, \R) \ar[rr]^W && \HH^{p-1,q+1}_{\trop}(\Xan, \R)
}
\end{xy}
\end{align*}
commutes.
The same is true for cohomology with compact support. 
\end{satz}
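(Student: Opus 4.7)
The plan is to run the proof of the tropical version (Theorem \ref{Wave Monodromy tropical theorem}) verbatim in the analytic setting, reducing all commutativity questions to the local tropical statement of Lemma \ref{thm monodromy wave trop} via the acyclic-resolution machinery provided by the tropical analytic de Rham theorem.

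First I would verify that both operators are honest morphisms of complexes of sheaves on $\Xan$. Since $M$ commutes with pullback along affine maps of tropical toric varieties (it only involves the exterior-algebra structure on $M$) and $W$ does so as well (its definition only uses the affine-linear structure of tropical simplices and tangent spaces), both extend from the chart-wise definitions in Section \ref{sect tropical cohomology} to morphisms
\begin{align*}
M \colon \AS^{p,\bullet} \to \AS^{p-1,\bullet}[1], \qquad
W \colon \Cund^{p,\bullet}(\R) \to \Cund^{p-1,\bullet}(\R)[1]
\end{align*}
of complexes of sheaves on $\Xan$. Similarly, the de Rham map $\dR \colon \AS^{p,\bullet} \to \Cund^{p,\bullet}(\R)$ of Theorem \ref{Tropical analytic de Rham theorem} is defined chart-wise from the tropical de Rham map of Remark \ref{rem deRham}, so it, too, is a morphism of complexes of sheaves.

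Next I would reduce the asserted commutativity on cohomology to a commutativity statement in the derived category of sheaves of $\R$-vector spaces on $\Xan$, namely
\begin{align*}
\dR \circ \bigl((-1)^{p-1} M\bigr) = W \circ \dR \quad \text{in } D(\Xan).
\end{align*}
By Lemma \ref{lem complex exact}, both $\AS^{p,\bullet}$ and $\Cund^{p,\bullet}(\R)$ are resolutions of $\FS^p_\R$; by Lemma \ref{lem cocycles acyclic} and the partition-of-unity argument for $\AS^{\bullet,\bullet}$, these resolutions are acyclic (at least for $R=\R$, which is our situation). Hence we may replace both complexes by $\FS^p_\R$ concentrated in degree $0$ and it suffices to check that the triangle
\begin{align*}
\begin{xy}
\xymatrix{
&&\AS^{p-1,1} \ar[dd]^{\dR}   \\
\FS^p_\R \ar[urr]^{(-1)^{p-1} M} \ar[drr]_{W} \\
&&\Cund^{p-1,1}(\R)
}
\end{xy}
\end{align*}
of sheaves on $\Xan$ commutes.

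Finally, the commutativity of this triangle is a local question, and any section of $\FS^p_\R$ around a point is, by definition of the sheaves in Sections \ref{sect superforms} and \ref{sect tropical cohomology Xan}, represented on a tropical chart $(V,\varphi)$ by a pullback of a section over the open subset $\trop_\varphi(V)$ of $\Trop_\varphi(X)$. Under this pullback description, $M$, $W$ and $\dR$ all agree with their tropical counterparts on $\Trop_\varphi(X)$, so the commutativity follows directly from Lemma \ref{thm monodromy wave trop}. The identical argument applies to cohomology with compact support, since restricting to $c$-soft / compactly supported sections preserves all three operators and the de Rham morphism. The main obstacle is really just the bookkeeping of the previous paragraph: one must be sure that the acyclic-resolution reduction is legitimately available (which is why invoking Lemma \ref{lem cocycles acyclic} matters, and why the statement lives over $\R$ rather than over a general $R\subset\R$), and that $M$ and $W$ as defined in Sections \ref{sect superforms}--\ref{sect tropical cohomology Xan} truly do commute with the chart-wise pullback transitions used in the definition of $\AS^{p,q}$ and $\Cund^{p,q}(\R)$.
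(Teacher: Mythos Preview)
Your proposal is correct and matches the paper's approach exactly: the paper's entire proof reads ``The proof of Theorem \ref{Wave Monodromy tropical theorem} works word for word,'' and you have simply spelled out what that means in the analytic setting, correctly identifying the chart-wise compatibility of $M$, $W$, and $\dR$, the acyclic-resolution reduction (via Lemmas \ref{lem complex exact} and \ref{lem cocycles acyclic}), and the final appeal to Lemma \ref{thm monodromy wave trop}.
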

\begin{proof}
The proof of Theorem \ref{Wave Monodromy tropical theorem} works word for word. 
\end{proof}

In \cite{Liu2}, Liu defined a $\Q$-subsheaf of $\FS^{p}_{\R}$ and defined \emph{rational classes} 
in tropical Dolbeault cohomology. 

\begin{defn} [Liu]
Denote by $\mathcal{J}^p$ the $\Q$-subsheaf of $\FS^p$ 
generated by sections of the form $(V, \varphi, \alpha)$, 
where $\varphi \colon U \to T$ and $\alpha \in \Lambda^p M$. 
The classes in $\HH^{p,q}(\Xan, \mathcal{J}^p) \subset \HH^{p,q}(\Xan, \R)$ 
are called \emph{rational classes}.
\end{defn}

\begin{prop} \label{prop rational classes}
Assume that $X$ satisfies condition $(\dagger)$. 
We have isomorphisms $\FS^p_\Q = \mathcal{J}^p$
and $\HH^{p,q}(\Xan, \Q) = \HH^{p,q}(\Xan, \mathcal{J}^p)$. 
\end{prop}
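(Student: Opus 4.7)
The second identity $\HH^{p,q}(\Xan, \Q) = \HH^{p,q}(\Xan, \mathcal{J}^p)$ will be a formal consequence of the sheaf-level equality $\FS^p_\Q = \mathcal{J}^p$ via Corollary \ref{kor sheaf cohomology}, so the real content is the sheaf identification. The inclusion $\mathcal{J}^p \subset \FS^p_\Q$ is tautological: any generator $(V,\varphi,\alpha)$ with $\varphi\colon U\to T$ a closed torus embedding and $\alpha\in\Lambda^p M$ is an integer-valued cocycle in $\FS^p_\Z(V)\subset\FS^p_\Q(V)$, so its $\Q$-linear combinations stay in $\FS^p_\Q$. For the reverse inclusion I argue on stalks: fix $x\in\Xan$ and $\eta_x\in(\FS^p_\Q)_x$. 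Using condition $(\dagger)$, the family $\tor_{\glob}$ is cofinal in $\A$ (which is in turn final in $\tor$), so the sheaf $\Cund^{p,q}$ is computed by $\tor_{\glob}$-charts, and $\eta_x$ is represented by a single chart $(W,\psi,\eta')$ with $\psi\colon X\to Y_\Sigma$ a global closed embedding. Denote by $\sigma\in\Sigma$ the cone such that $\psi^{\an}(x)\in O_\sigma^{\an}$.

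To manufacture a torus embedding near $x$, pick a basis $\chi_1,\dots,\chi_r$ of $\sigma^\perp\cap M$, the character lattice of $O_\sigma$. Each $\chi_i$ lies in $S_\sigma\cap(-S_\sigma)$ and hence is invertible on $U_\sigma$, so the pullbacks $f_i:=\chi_i\circ\psi$ are invertible regular functions on the Zariski open $U':=\psi^{-1}(U_\sigma)\ni x$. Choose a very affine open $U''\subseteq U'$ with $x\in U''^{\an}$ and any closed embedding $\varphi_1\colon U''\to T_1$ into a torus; then
\[
\varphi \;:=\; (f_1,\dots,f_r,\varphi_1) \;\colon\; U'' \;\longrightarrow\; T_0 \;:=\; O_\sigma\times T_1
\]
is a closed embedding into the torus $T_0$. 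The embeddings $\psi|_{U''}$ and $\varphi$ admit the common refinement $(\psi|_{U''},\varphi)\colon U''\to Y_\Sigma\times T_0$: by construction the first $r$ torus coordinates of $\varphi$ literally agree with $\chi_i\circ\psi$, so on tropicalizations the image near $x$ lies inside the graph of the projection $N_\sigma\twoheadrightarrow N(\sigma)$ (in the first two factors) crossed with $N_{T_1}$. Consequently the pullback of $\eta'$ via the first projection to $N_\Sigma$ agrees, on this refined chart near $x$, with the pullback along $\trop_\varphi$ of a uniquely determined cocycle $\beta\in\FS^p_\Q$ on $\trop_\varphi(W\cap U''^{\an})$; concretely $\beta$ is obtained by pulling $\eta'|_{N(\sigma)}$ back along the surjection $N_{T_0}=N(\sigma)\oplus N_{T_1}\twoheadrightarrow N(\sigma)$.

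Once $\eta_x$ is represented by the torus chart $(W\cap U''^{\an},\varphi,\beta)$, the value of $\beta$ at $\trop_\varphi(x)$ is a $\Q$-linear functional on the multitangent $\Fbb^\Q_p\subset\Lambda^p N_{T_0,\Q}$; extending this arbitrarily to a $\Q$-linear functional $\widetilde\lambda\in\Lambda^p M_{T_0,\Q}$ and then clearing denominators produces $\alpha:=k\widetilde\lambda\in\Lambda^p M_{T_0}$ with $k\in\Z_{>0}$. After possibly shrinking the chart so that $\beta$ coincides with $\widetilde\lambda|_{\Fbb^\Q_p}$ throughout, the section $(W\cap U''^{\an},\varphi,\alpha)$ is a generator of $\mathcal{J}^p$ realizing $k\eta_x$, and dividing by $k$ gives $\eta_x\in(\mathcal{J}^p)_x$. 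The decisive technical point is the transport step: showing that a rational cocycle originally expressed via an embedding into a toric variety with $x$ in a nontrivial boundary stratum can be realized through a torus embedding. This rests on the observation that the multitangent information at $x$ lives in $\Lambda^p N(\sigma)$ alone and is naturally captured by the torus $O_\sigma$ through the invertible characters $\chi_i\in\sigma^\perp\cap M$; the remaining steps (producing the global chart, choosing the torus embedding, and extending and clearing denominators) are comparatively routine.
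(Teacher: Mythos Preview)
Your proof is correct and follows the same underlying idea as the paper, but you spell out in detail what the paper compresses into two lines. The paper's own proof simply cites \cite[Proposition~3.11]{JSS} (the explicit computation of $\FS^p_\R$ on tropical varieties), observes that the computation goes through verbatim with $\Q$-coefficients, and then says the identification ``follows directly from the definitions and Corollary~\ref{kor sheaf cohomology}.'' What you do is unpack that last clause: you carry out the transport from a toric chart to a torus chart (so that the generators of $\mathcal{J}^p$, which are defined via torus embeddings, become available) and then perform the extension-and-clear-denominators step realising a germ of $\FS^p_\Q$ as a $\Q$-multiple of an integral $\Lambda^p M$-class.

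Two minor remarks. First, your use of $(\dagger)$ to obtain a \emph{global} chart is unnecessary for the sheaf identity $\FS^p_\Q=\mathcal{J}^p$: this is a stalkwise statement, so any local $\tor$-chart representing $\eta_x$ would do, and the subsequent construction of the torus embedding $\varphi$ is entirely local. The hypothesis $(\dagger)$ enters only in the passage to cohomology via Corollary~\ref{kor sheaf cohomology}. Second, two of your steps silently rely on the content of \cite[Proposition~3.11]{JSS}: the transport step (that $\eta'$ near a boundary point is the pullback of its restriction to the stratum $N(\sigma)$ --- this is not built into the definition of $\Cund^{p,0}$, unlike the compatibility-at-boundary condition for $\AS^{p,0}$, and must be deduced from $\partial\eta'=0$), and the shrinking step (that $\beta$ can be made to coincide with a single functional $\widetilde\lambda$ on a small enough chart). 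Both are exactly what the cited explicit computation of $\FS^p$ provides; the paper's proof gains its brevity by citing it directly.
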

\begin{proof}
The explicit computation \cite[Proposition 3.11]{JSS} of $\FS^p_\R$ works also for rational coefficients. 
Then this follows directly from the definitions and Corollary \ref{kor sheaf cohomology}. 
\end{proof}

The following statement  in particular shows that 
$\HH^{p,q}(\Xan, \Z) \subsetneq \HH^{p,q}(\Xan, \Q)$. 

\begin{satz}
Assume that $X$ satisfies condition $(\dagger)$. 
Then is a non-trivial $R$-linear map
\begin{align*}
\cap [\Xan]_R \colon \HH^{n,n}_{\trop, c}(\Xan, R) \to R.
\end{align*} 
If $R = \R$, then this agrees with the map induced by integration via $\dR$. 
\end{satz}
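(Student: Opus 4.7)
The strategy is to build the pairing $\cap [\Xan]_R$ by assembling, chart-by-chart, the tropical fundamental-class pairings of Definition \ref{defn fundamental class}. Since $X$ satisfies condition $(\dagger)$, the family $\tor_{\glob}$ of global closed embeddings of $X$ into toric varieties is a fine enough global family that is cofinal in $\tor$, so Proposition \ref{prop comparison coefficients} gives
\begin{align*}
\HH^{n,n}_{\trop, c}(\Xan, R) \;=\; \varinjlim_{\varphi \colon X \to Y_\Sigma} \HH^{n,n}_{\trop, c}(\Trop_\varphi(X), R),
\end{align*}
with the limit running over such $\varphi$. For each $\varphi$, Definition \ref{defn fundamental class} provides an $R$-linear pairing $\cap [\Trop_\varphi(X)]_R$, and the plan is to induce $\cap [\Xan]_R$ from the universal property of the colimit.

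The main point to verify is that for a refinement $\varphi'$ of $\varphi$ via an affine toric map $\psi=\psi_{\varphi,\varphi'}$ the diagram
\begin{align*}
\begin{xy}
\xymatrix{
\HH^{n,n}_{\trop,c}(\Trop_{\varphi}(X), R) \ar[rr]^{\Trop(\varphi,\varphi')^*} \ar[drr]_{\cap [\Trop_\varphi(X)]_R} && \HH^{n,n}_{\trop,c}(\Trop_{\varphi'}(X), R) \ar[d]^{\cap [\Trop_{\varphi'}(X)]_R} \\
&& R
}
\end{xy}
\end{align*}
commutes. This amounts to a tropical projection formula: the map $\Trop(\varphi,\varphi')$ is a proper, surjective morphism between two tropicalizations of the same $n$-dimensional variety $X$, and the Sturmfels--Tevelev formula together with the balancing condition yields $\Trop(\varphi,\varphi')_* [\Trop_{\varphi'}(X)]_R = [\Trop_\varphi(X)]_R$, from which the commutativity follows. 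Non-triviality is then immediate: pick any global $\varphi$ together with a compactly supported tropical $(n,n)$-cocycle on $\Trop_\varphi(X)$ that pairs non-trivially with the fundamental class (such cocycles exist because $\Trop_\varphi(X)$ has a top-dimensional face with positive weight), and observe that its image in the direct limit pairs to the same nonzero element of $R$.

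For the final statement with $R=\R$, I would combine Proposition \ref{prop int cap}, which on the tropical side asserts $\int_{Y} \alpha = \dR(\alpha) \cap [Y]_{\R}$, with Theorem \ref{thm independence of integral}, which expresses $\int_{\Xan}$ through a single tropical chart of integration. Since the analytic $\dR$ is defined chart-by-chart from the tropical $\dR$, both sides of the desired identity reduce over the chosen chart to the tropical statement. The main obstacle I anticipate is the projection-formula compatibility used to pass to the limit: while morally clear, a rigorous justification requires a careful stratum-by-stratum analysis of the pushforward of the fundamental class under $\Trop(\psi)$, which may contract strata at infinity, and it forces one to invoke Sturmfels--Tevelev for the composition of a closed embedding with a toric morphism rather than only for a single tropicalization.
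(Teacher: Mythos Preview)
Your proposal is correct and follows essentially the same approach as the paper: use Proposition~\ref{prop comparison coefficients} to write $\HH^{n,n}_{\trop,c}(\Xan,R)$ as a direct limit over global toric embeddings, check that the tropical fundamental-class pairings from Definition~\ref{defn fundamental class} are compatible along refinements, and for $R=\R$ invoke Proposition~\ref{prop int cap}. The paper's proof is considerably terser---it simply asserts the compatibility with refinements without naming the Sturmfels--Tevelev-type input you flag, and does not separately argue non-triviality---so your more detailed treatment of the projection-formula step and of non-triviality is a reasonable expansion rather than a different route.
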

\begin{proof}
The maps $[\Trop_{\varphi}(X)]_R$ as defined in Definition \ref{defn fundamental class}
are compatible with pullback along refinements, so by Proposition \ref{prop comparison coefficients}
we get a well defined map on $\HH^{n,n}_c(\Xan, R)$. 

The last part of the statement follows from Proposition \ref{prop int cap}. 
\end{proof}

Liu showed that if the value group of $K$ is equal to $\Q$, then 
his monodromy map $M$ maps rational classes to rational classes \cite[Theorem 5.5 (1)]{Liu2}. 
We generalize to the following statement:

\begin{satz} 
Assume that $X$ satisfies condition $(\dagger)$.  
The wave operator $W$ (and by virtue of Corollary \ref{kor wave mondromy analytic} also
the monodromy map $M$) restricts to a map
\begin{align*}
W \colon \HH^{p,q}_{\trop,c}(\Xan, R) \to \HH^{p-1,q+1}_{\trop,c}(\Xan, R[\Gamma]).
\end{align*}
\end{satz}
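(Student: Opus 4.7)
The plan is to reduce the statement to the tropical case of Proposition \ref{prop wave restricts} by passing to the direct limit description of cohomology provided by Proposition \ref{prop comparison coefficients}. Since $X$ satisfies condition $(\dagger)$, we may take $\Scal = \tor_{\glob}$, which is a fine enough global admissible family and is cofinal in $\tor$. By Proposition \ref{prop comparison coefficients} we then have
\begin{align*}
\HH^{p,q}_{\trop,c}(\Xan, R) \cong \varinjlim_{\varphi \in \tor_{\glob}} \HH^{p,q}_{\trop,c}(\Trop_\varphi(X), R),
\end{align*}
and similarly with $R$ replaced by $R[\Gamma]$ and $(p,q)$ by $(p-1,q+1)$.

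For each embedding $\varphi \colon X \to Y_\Sigma$ in $\tor_{\glob}$, the image $\Trop_\varphi(X)$ is an integral $\Gamma$-affine polyhedral complex, so Proposition \ref{prop wave restricts} applies and yields a wave operator
\begin{align*}
W \colon \HH^{p,q}_{\trop,c}(\Trop_\varphi(X), R) \to \HH^{p-1,q+1}_{\trop,c}(\Trop_\varphi(X), R[\Gamma]).
\end{align*}
The next step is to verify that these wave operators form a compatible system indexed by $\tor_{\glob}$. Concretely, if $\varphi'$ is a refinement of $\varphi$ in $\tor_{\glob}$, then the induced morphism $\Trop(\varphi, \varphi') \colon \Trop_{\varphi'}(X) \to \Trop_\varphi(X)$ is an affine map of tropical toric varieties, along which both $W$ and the $R[\Gamma]$-refinement of coefficients are natural (this is the same naturality that was used to define $W$ on $\Xan$ at the beginning of Section \ref{sect wave and monodromy operator}). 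Hence passing to the colimit produces the desired map
\begin{align*}
W \colon \HH^{p,q}_{\trop,c}(\Xan, R) \to \HH^{p-1,q+1}_{\trop,c}(\Xan, R[\Gamma]),
\end{align*}
and by construction it agrees with the wave operator defined on $\Xan$. The corresponding statement for $M$ then follows from Corollary \ref{kor wave mondromy analytic}.

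The main issue to check carefully is the compatibility in the second step: one needs the $\Gamma$-rational structure on $\Trop_\varphi(X)$ (guaranteed since vertices lie in $N_\Gamma$ by the Sturmfels--Tevelev structure of tropicalizations over $K$) to be preserved under the affine maps $\Trop(\varphi,\varphi')$, so that the triangulations with $\Gamma$-vertices used to justify Proposition \ref{prop wave restricts} can be chosen compatibly, or at least so that the image of $\HH^{p,q}_{\trop,c}(-, R)$ under $W$ actually lands in $\HH^{p-1,q+1}_{\trop,c}(-, R[\Gamma])$ at the level of the colimit. Since the translation part of any affine map $\Trop(\psi_{\varphi,\varphi'})$ arises from tropicalizing a $K$-point of a torus, it automatically lies in $N_\Gamma$, so $\Gamma$-triangulations pull back to $\Gamma$-triangulations and the compatibility is essentially formal.
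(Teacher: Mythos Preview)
Your proof is correct and follows essentially the same approach as the paper: reduce to the tropical case via the direct limit description of Proposition~\ref{prop comparison coefficients}, then apply Proposition~\ref{prop wave restricts} to each $\Trop_\varphi(X)$. The paper's proof is two sentences and leaves the compatibility check implicit; your added discussion of naturality under the transition maps $\Trop(\varphi,\varphi')$ and of why these preserve the $\Gamma$-affine structure is accurate but more than the paper spells out.
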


\begin{proof}
By Proposition \ref{prop comparison coefficients}, it is sufficient to 
prove this theorem for $\Trop_{\varphi}(X)$. 
Since $\Trop_{\varphi}(X)$ is an integral $\Gamma$-affine tropical variety, 
this follows from Proposition \ref{prop wave restricts}. 
\end{proof}

Mikhalkin and Zharkov conjectured that for a smooth tropical variety 
$X$, the iterated wave operator 
\begin{align*}
W^{p-q} \colon \HH^{p,q}_{\trop}(X, \R) \to \HH^{q,p}_{\trop}(X, \R)
\end{align*}
is an isomorphism for all $p \geq q$ \cite[Conjecture 5.3]{MikZhar}. 

Liu conjectured that if $K$ is such that the residue field $\tilde K$ is the algebraic closure of a finite 
field and $X$ is smooth and proper, 
then the iterated monodromy operator 
\begin{align*}
M^{p-q} \colon \HH^{p,q}(X) \to \HH^{q,p}(X)
\end{align*}
is an isomorphism for all $p \geq q$ \cite[Conjecture 5.2]{Liu2}. 

As a consequence of Theorem \ref{kor wave mondromy analytic}
we can tie together both of these conjectures. 

\begin{prop}
Let $X$ be a proper variety.
Assume there exists a global admissible family of tropicalizations $\Scal$
for $X$ such that for all $\varphi \in \Scal_{\map}$ the tropical variety 
$\Trop_{\varphi}(X)$ satisfies Mikhalkin's and Zharkov's conjecture. 
Then $X$ satisfies Liu's conjecture. 
\end{prop}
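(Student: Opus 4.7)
The plan is to reduce Liu's conjecture for $X$ to the Mikhalkin--Zharkov conjecture for each $\Trop_\varphi(X)$ by passing through the direct limit description of cohomology and the identification of the monodromy and wave operators.

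First, since $X$ is proper, $\Xan$ is compact, so $\HH^{p,q}(\Xan) = \HH^{p,q}_c(\Xan)$ and similarly $\HH^{p,q}_{\trop}(\Xan, \R) = \HH^{p,q}_{\trop, c}(\Xan, \R)$. Moreover, for each $\varphi \in \Scal_{\map}$, the tropical variety $\Trop_\varphi(X)$ is the continuous image of the compact space $\Xan$, hence compact, so $\HH^{p,q}_c(\Trop_\varphi(X)) = \HH^{p,q}(\Trop_\varphi(X))$ and likewise with tropical coefficients. Thus by Theorem \ref{thm cohomology limit} and Proposition \ref{prop comparison coefficients} (applicable since admissible implies $\Scal$ is cofinal in $\tor$), one gets
\begin{align*}
\HH^{p,q}(\Xan) &= \varinjlim_{\varphi \in \Scal_{\map}} \HH^{p,q}(\Trop_\varphi(X)), \\
\HH^{p,q}_{\trop}(\Xan, \R) &= \varinjlim_{\varphi \in \Scal_{\map}} \HH^{p,q}_{\trop}(\Trop_\varphi(X), \R).
\end{align*}

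Next, I would invoke Corollary \ref{kor wave mondromy analytic} (and its tropical counterpart Theorem \ref{Wave Monodromy tropical theorem}) to reduce Liu's statement to the corresponding statement for the wave operator. Indeed, the de Rham isomorphism from Theorem \ref{Tropical analytic de Rham theorem} intertwines $M$ with $\pm W$ on $\Xan$ and, analogously, on each $\Trop_\varphi(X)$. Iterating, $M^{p-q}$ is an isomorphism on $\HH^{p,q}(\Xan)$ if and only if $W^{p-q}$ is an isomorphism on $\HH^{p,q}_{\trop}(\Xan, \R)$. So it suffices to prove the latter.

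Now the wave operator is compatible with affine morphisms of tropical toric varieties, hence with pullbacks $\Trop(\varphi, \varphi')^*$, so the transition maps in the direct system commute with $W^{p-q}$. By hypothesis, for every $\varphi \in \Scal_{\map}$ the tropical variety $\Trop_\varphi(X)$ satisfies the Mikhalkin--Zharkov conjecture, meaning $W^{p-q} \colon \HH^{p,q}_{\trop}(\Trop_\varphi(X), \R) \to \HH^{q,p}_{\trop}(\Trop_\varphi(X), \R)$ is an isomorphism. Since direct limits are exact, and in particular preserve isomorphisms of directed systems of morphisms, the induced map
\begin{align*}
W^{p-q} \colon \HH^{p,q}_{\trop}(\Xan, \R) \to \HH^{q,p}_{\trop}(\Xan, \R)
\end{align*}
is an isomorphism as well. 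Transferring back through $\dR$ yields Liu's conjecture for $X$.

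The main obstacle is essentially bookkeeping: one must verify carefully that $W$ (and $M$) commute with the directed system structure so that the limit really inherits the isomorphism property, and one must ensure the identification of $\HH^{p,q}$ with $\HH^{p,q}_c$ on both the analytic and tropical sides is correctly set up before applying Theorem \ref{thm cohomology limit}. Once these points are in place, the argument is essentially formal.
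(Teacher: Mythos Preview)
Your argument is correct and follows the same route as the paper's (very terse) proof, which simply cites Theorem~\ref{kor wave mondromy analytic} and Theorem~\ref{thm cohomology limit}; you have just spelled out the details.

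One small point: your parenthetical ``applicable since admissible implies $\Scal$ is cofinal in $\tor$'' is not quite right. Admissibility of $\Scal$ means by definition that $\Psi_{\Scal,\tor}$ is an isomorphism of sheaves of forms; it does not assert cofinality, and Proposition~\ref{prop comparison coefficients} genuinely requires the latter. Fortunately you do not need that proposition at all. The cleaner path (which is what the paper's citations actually support) is to stay on the superform side: Theorem~\ref{thm cohomology limit} gives $\HH^{p,q}(\Xan)=\varinjlim_\varphi \HH^{p,q}(\Trop_\varphi(X))$, and $M$ commutes with $\trop_\varphi^*$. By Theorem~\ref{Wave Monodromy tropical theorem}, on each $\Trop_\varphi(X)$ the operator $M^{p-q}$ agrees (up to sign, via $\dR$) with $W^{p-q}$, which is an isomorphism by hypothesis. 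Passing to the direct limit finishes the proof without ever invoking $\HH^{p,q}_{\trop}(\Xan,\R)$ or Proposition~\ref{prop comparison coefficients}.
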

\begin{proof}
This follows directly from Theorem \ref{kor wave mondromy analytic} and 
Theorem \ref{thm cohomology limit}.
\end{proof}

\section{Non-trivial classes} \label{sect non-trivial classes}

In this section we (partially) compute tropical cohomology with coefficients in three examples: 
Curves of good reduction, toric varieties and Mumford curves. 

For the first theorem assume that the value group $\Gamma$ of $K$ is a subring of $\R$. 
We denote by $\log \vert \Ocal_X^\times \vert$ the sheaf of real valued functions on $\Xan$ 
that are locally of the form $\log \vert f \vert$ for an invertible function $f$ on $X$.

\begin{satz} \label{thm Pic0}
Let $K$ be algebraically closed and $X$ be a smooth projective curve of good reduction. 
Then there exists an injective morphism 
\begin{align*}
\Pic^0(\tilde X) \to \HH^{1,1}(X, \Z),
\end{align*}
where $\Pic^0(\tilde X)$ denotes the group of degree $0$ line bundels on the reduction $\tilde X$ of $X$. 
\end{satz}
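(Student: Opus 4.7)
The plan is to build the map as the first Chern class of the canonical lift of $L$ to a smooth proper $\Ocal_K$-model of $X$, and then verify integrality and injectivity.

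First I would exploit good reduction to lift the line bundle. Fix a smooth proper model $\XS$ of $X$ over $\Ocal_K$ with special fiber $\tilde X$. Since $\tilde K$ is algebraically closed and the relative Picard functor $\Pic^0_{\XS/\Ocal_K}$ is smooth over $\Ocal_K$, every $L \in \Pic^0(\tilde X)$ admits a lift to a line bundle $\mathcal{L}$ on $\XS$; the lift is unique up to pullback from $\Spec \Ocal_K$, which is trivial since $\Ocal_K$ is local. Choose a trivializing open cover $\{(U_\alpha, s_\alpha)\}$ of $\mathcal{L}$ on $\XS$, with transition functions $g_{\alpha\beta} = s_\alpha/s_\beta \in \Ocal_\XS^\times(U_\alpha \cap U_\beta)$.

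Next I would produce an integral \v{C}ech cocycle for the Chern class. Since $X$ is smooth projective it satisfies condition $(\dagger)$, so Corollary \ref{kor sheaf cohomology} identifies $\HH^{1,1}(\Xan, \Z) = \HH^1(\Xan, \FS^1_\Z)$. The functions $\log \vert g_{\alpha\beta}\vert$ are sections of $\log\vert \Ocal_X^\times\vert$ on the analytic cover $\{U_\alpha^{\an}\}$ of $\Xan$; locally in any tropical chart $(V, \varphi)$ coming from a closed embedding of an open subset of $X$ into a toric variety, each $\log\vert g_{\alpha\beta}\vert$ is the pullback along $\trop_\varphi$ of an integer-slope piecewise linear function, because $g_{\alpha\beta}$ is an invertible regular function. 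Its tropical exterior derivative is therefore a section of $\FS^1_\Z$, and the resulting \v{C}ech $1$-cocycle defines the desired class $c_1(L) \in \HH^{1,1}(\Xan, \Z)$. Via the de Rham isomorphism of Theorem \ref{Tropical analytic de Rham theorem}, its image in $\HH^{1,1}(\Xan, \R)$ equals the Chambert--Loir--Ducros first Chern class of $(\mathcal{L}_\eta, \|\cdot\|_{\mathcal L})$, locally $d'd''(-\log \|s\|)$; this makes independence of the choices above, and additivity in $L$, transparent.

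The main obstacle is injectivity. My plan is to argue that if $c_1(L) = 0$ in $\HH^{1,1}(\Xan, \Z) = \HH^1(\Xan, \FS^1_\Z)$, then the \v{C}ech cocycle $(\log\vert g_{\alpha\beta}\vert)$ is cohomologous to a locally constant $\R$-valued cocycle. Using that the skeleton of $\Xan$ is a single point by good reduction and that specialization of formal functions gives a surjection $\Ocal_{\Xan}^\times \to \Ocal_{\tilde X}^\times$ of reduction sheaves on $\tilde X$, reducing the corrected cocycle modulo the maximal ideal of $\Ocal_K$ yields a coboundary in $\Ocal_{\tilde X}^\times$, hence $L \cong \Ocal_{\tilde X}$. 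The technical subtlety to work through is that $\Z$-valued rather than $\R$-valued vanishing of the Chern class is exactly what the reduction sees, since the kernel of the sheaf map $\log\vert \Ocal_X^\times\vert \to \FS^1_\Z$ consists of locally constant $\R$-valued functions; an integral coboundary for the tropical derivatives $d''\log\vert g_{\alpha\beta}\vert$ is precisely what is needed to trivialize the cocycle $(g_{\alpha\beta})$ modulo a locally constant unit, which is the missing information to conclude triviality after reduction.
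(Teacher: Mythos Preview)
Your route differs from the paper's, and the injectivity step you yourself flag as ``the technical subtlety to work through'' is a genuine gap, not a detail.

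The paper does not lift line bundles or write \v{C}ech cocycles. It uses two short exact sequences of sheaves on $\Xan$. First, the tropical exponential sequence
\[
0 \to \Gamma \to \log\vert\Ocal_X^\times\vert \to \FS^1_\Z \to 0,
\]
whose long exact sequence, together with $\HH^1(\Xan,\Gamma)=0$ (since $\Xan$ is contractible under good reduction), gives an isomorphism $\HH^1(\Xan,\log\vert\Ocal_X^\times\vert)\cong \HH^{1,1}(\Xan,\Z)$. Second, Thuillier's sequence
\[
0 \to \log\vert\Ocal_X^\times\vert \to \HS_\Z \to \iota_*\Pic^0(\tilde X) \to 0,
\]
where $\HS_\Z$ is the sheaf of integral harmonic (model) functions. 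The connecting map of this sequence sends $\Pic^0(\tilde X)$ into $\HH^1(\Xan,\log\vert\Ocal_X^\times\vert)$, and it is injective because Thuillier's maximum principle forces $\HH^0(\Xan,\HS_\Z)=\Gamma=\HH^0(\Xan,\log\vert\Ocal_X^\times\vert)$.

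Your injectivity argument breaks as follows. Suppose $c_1(L)=0$; using the first exact sequence as you do, the class of $(\log\vert g_{\alpha\beta}\vert)$ in $\HH^1(\Xan,\log\vert\Ocal_X^\times\vert)$ vanishes. This means, on some refined \emph{analytic} cover, that $\log\vert g_{\alpha\beta}\vert = h_\alpha - h_\beta$ for sections $h_\alpha$ of $\log\vert\Ocal_X^\times\vert$. But such a section is only \emph{locally} of the form $\log\vert f\vert$ with $f$ algebraic invertible; the local $f$'s need not glue to a single $f_\alpha$ on $U_\alpha$, nor extend to the model $\XS$. Hence you cannot ``correct'' $(g_{\alpha\beta})$ to a cocycle of constant absolute value and then reduce mod $\mathfrak m$. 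The sentence ``reducing the corrected cocycle \dots\ yields a coboundary in $\Ocal_{\tilde X}^\times$'' is exactly where the argument has no content. What is missing is an identification of the cokernel of $\log\vert\Ocal_X^\times\vert$ inside some larger sheaf with $\Pic^0(\tilde X)$, together with a maximum principle controlling global sections of that larger sheaf; this is precisely what Thuillier's $\HS_\Z$ and \cite[Proposition 2.3.13, Lemme 2.3.22]{Thuillier} provide, and I do not see a way to bypass that input.
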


\begin{proof}
We have the following exact sequence
\begin{align} \label{eq exp sheaves}
0 \to \Gamma \to \log \vert \Ocal_X^\times \vert \to \FS^1_\Z \to 0,
\end{align}
which is a non-archimedean version of a well-known exponential sequence 
from tropical geometry \cite[Definition 4.1]{MikZharII}. 
This induces the following exact sequence in cohomology groups

\begin{align} \label{eq exp groups}
0 \to \HH^{1,0}_{\trop}(\Xan, \Z) \to  \HH^{0,1}_{\trop}(\Xan, \Gamma) \to 
\HH^1_{\trop}(\Xan, \log \vert \Ocal_X^\times \vert) \to \HH^{1,1}_{\trop}(\Xan, \Z) \to 0. 
\end{align}
In particular, since $X$ is a curve of good reduction, $\Xan$ is contractible and 
$\HH^{0,1}_{\trop}(\Xan, \Gamma) = \HH^1(\Xan, \Gamma) = 0$. 
Hence we have that
\begin{align*}
\HH^1(\Xan, \log \vert \Ocal_X^\times \vert) \to \HH^{1,1}(\Xan, \Z).
\end{align*}
is an isomorphism.
Therefore it is sufficient to prove that there exists 
an injective morphism $\Pic^0(\tilde X) \to \HH^{1}(\Xan, \log \vert \Ocal_X^\times \vert)$. 
We have an exact sequence
\begin{align} \label{eq Thuillier sheaves}
0 \to \log \vert \Ocal_X^\times \vert \to \HS_\Z \to \iota_* \Pic^0(\tilde X) \to 0
\end{align}
where $\HS_\Z$ is the sheaf of real valued functions on $\Xan$ that locally factor as the retraction
to a skeleton composed with a piecewise linear function with integer slopes and values in $\Gamma$ on the edges 
of said skeleton. 
This is sequence is the integral version of \cite[Lemme 2.3.22]{Thuillier}. 
Further, Thuillier showed that every harmonic function on a compact Berkovich analytic 
space is constant \cite[Proposition 2.3.13]{Thuillier}. 
Thus we obtain the following long exact sequence: 
\begin{align} \label{eq Thuillier groups}
0 \to \Gamma \to \Gamma \to \Pic^0(\tilde X) \to \HH^1(\Xan, \log \vert \Ocal_X^\times \vert).
\end{align}
This shows the existence of an injective morphism 
$\Pic^0(\tilde X) \to \HH^1(\Xan, \log \vert \Ocal_X^\times \vert)$.
\end{proof}

\begin{bem}
Since the Picard group of a smooth projective curve of positive genus over an algebraically closed field
contains torsion, Theorem \ref{thm Pic0} implies that $\HH^{1,1}(\Xan, \Z)$ can contain torsion.
In other word the map 
$\HH^{p,q}(\Xan, \Z) \to \HH^{p,q}(\Xan, \R) = \HH^{p,q}(\Xan, \Z) \otimes \R$ 
need not be injective. 

It is very possible that one can drop the assumption for $K$ to be algebraically closed in Theorem \ref{thm Pic0}. 
\end{bem}

\begin{satz} \label{thm trop injective}
Let $\varphi \colon X \to Y$ be a closed embedding of $X$ into a toric variety $Y$. 
Assume that $\Trop_{\varphi}(X)$ is a smooth tropical variety. 
Then  
$\trop^* \colon \HH^{p,q}(\Trop_{\varphi}(X)) \to \HH^{p,q}(\Xan)$ and  
$\trop^* \colon \HH^{p,q}_c(\Trop_{\varphi}(X)) \to \HH^{p,q}_c(\Xan)$ are injective. 
\end{satz}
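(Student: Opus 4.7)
The plan is to combine Poincaré duality for smooth tropical varieties with the observation that, since $\varphi$ is a \emph{global} closed embedding, the pair $(\Xan, \varphi)$ is a $\tor$-tropical chart of integration for every pulled-back form. Write $n = \dim X$. Since $\Trop_\varphi(X)$ is smooth of dimension $n$, I would invoke Poincaré duality (see \cite{JSS}, \cite{JRS}) in the form that the integration pairing
\begin{align*}
\HH^{p,q}(\Trop_\varphi(X)) \times \HH^{n-p,n-q}_c(\Trop_\varphi(X)) \to \R, \qquad ([\alpha],[\beta]) \mapsto \int_{\Trop_\varphi(X)} \alpha \wedge \beta,
\end{align*}
is non-degenerate in both slots. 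Thus a class of either type vanishes if and only if it pairs trivially with every closed form of complementary bidegree and support type.

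Assume $[\alpha]\in\HH^{p,q}(\Trop_\varphi(X))$ satisfies $\trop^*\alpha = d''\eta$ for some $\eta \in \AS^{p,q-1}(\Xan)$, and let $\beta \in \AS^{n-p,n-q}_c(\Trop_\varphi(X))$ be $d''$-closed. Because the triple $(\Xan, \varphi, \alpha\wedge\beta)$ represents $\trop^*(\alpha\wedge\beta)$ on all of $\Xan$, Theorem \ref{thm independence of integral} yields
\begin{align*}
\int_{\Trop_\varphi(X)} \alpha \wedge \beta \;=\; \int_{\Xan} \trop^*\alpha \wedge \trop^*\beta \;=\; \int_{\Xan} d''(\eta \wedge \trop^*\beta),
\end{align*}
using $d''\trop^*\beta=0$ and the Leibniz rule. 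As $\trop_\varphi$ is proper, $\trop^*\beta$ has compact support, hence so does $\eta \wedge \trop^*\beta$, and Stokes' theorem makes the integral vanish. Non-degeneracy of the pairing then gives $[\alpha]=0$, proving injectivity on $\HH^{p,q}$. The compactly supported case is parallel: the primitive $\eta$ may be chosen in $\AS^{p,q-1}_c(\Xan)$, so $\eta\wedge\trop^*\beta$ has compact support even for an arbitrary (not compactly supported) closed $\beta$, and the same computation applies with the roles of $\alpha$ and $\beta$ swapped.

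The only non-routine input is Poincaré duality for smooth tropical subvarieties of tropical toric varieties in precisely this form. If the statement in \cite{JRS}/\cite{JSS} is not immediately available for tropical varieties extending into boundary strata, one would have to verify that it still holds there; the remaining ingredients (the compatibility $\int_{\Xan}\trop^*\gamma = \int_{\Trop_\varphi(X)}\gamma$ for compactly supported $(n,n)$-forms, and Stokes on $\Xan$) are immediate from the chart-of-integration formalism of Section \ref{sect integration} and the fact that $\trop^*$ respects $\wedge$ and $d''$ chart-by-chart.
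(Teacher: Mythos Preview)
Your argument is correct and is essentially the paper's own proof: both use the Poincar\'e duality pairing for smooth tropical varieties from \cite[Theorem 4.33]{JSS} together with the compatibility $\int_{\Xan}\trop^*(\alpha\wedge\beta)=\int_{\Trop_\varphi(X)}\alpha\wedge\beta$ to conclude that a class with nonzero pairing cannot become exact after pullback. Your residual worry about boundary strata is unfounded, since \cite[Theorem 4.33]{JSS} is stated precisely for smooth tropical subvarieties of tropical toric varieties.
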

\begin{proof}
By \cite[Theorem 4.33]{JSS}, since $\Trop_{\varphi}(X)$ is smooth
there is a perfect pairing 
\begin{align*}
\HH^{p,q}(\Trop_{\varphi}(X)) \times \HH^{n-p,n-q}_c(\Trop_{\varphi}(X)) \to \R
\end{align*}
induced by the wedge product and integration of superforms. 
Thus given a $d''$-closed $\alpha$ in $\AS^{p,q}(\Trop_{\varphi}(X))$ 
whose class $[\alpha] \in \HH^{p,q}(\Trop_{\varphi}(X))$ is non-trivial, 
there exists $[\beta] \in \HH_c^{n-p,n-q}(\Trop_{\varphi}(X))$ such that 
$\int_{\Trop_{\varphi}(X)} \alpha \wedge \beta \neq 0$. 
Thus we have 
\begin{align*}
\int_{\Xan} \trop^*_\varphi \alpha \wedge \trop^*_\varphi \beta \neq 0.
\end{align*}
Since integration and the wedge product are well defined on cohomology 
this means that $[\trop^*_\varphi \alpha \wedge \trop^*_\varphi \beta]$ and consequently 
$[\trop^*_\varphi \alpha]$ is not trivial.
The argument for $\alpha \in \HH^{p,q}_c(\Trop_{\varphi}(X))$ works the same except 
$[\beta] \in \HH^{n-p,n-q}(\Trop_{\varphi}(X))$. 
\end{proof}  

\begin{Ex}
Let $Y_\Sigma$ be a smooth toric variety. 
Then $Y_\Sigma$ is locally isomorphic to $\A^n$ and hence 
$\Trop(Y)$ is locally isomorphic to $\Trop(\A^n)$ and hence is a smooth tropical variety. 
Thus 
\begin{align*}
\trop^* \colon \HH^{p,q}(\Trop(Y)) \to \HH^{p,q}(Y^{\an})
\end{align*}
is injective by Theorem \ref{thm trop injective}. 
Let $Y_\Sigma(\C)$ be the complex toric variety associated with $\Sigma$. 
Then $\HH^{p,q}_{\Hodge}(Y_\Sigma(\C)) \cong \HH^{p,q}(\Trop(Y_\Sigma), \C)$ \cite[Corollary 2]{IKMZ}. 
In particular we have $\dim_{\R} \HH^{p,q}(Y_{\Sigma}) \geq \dim_{\C} \HH^{p,q}_{\Hodge}(Y_\Sigma(\C))$. 
One may figure out the latter in terms of $\Sigma$ using \cite[Section 5.2]{Fulton} or with the 
help of a computer and in terms of the polytope of $Y$ using the package cellularSheaves for polymake \cite{KSW}. 
Note that $\HH^{p,q}_{\Hodge}(Y(\C)) = 0$ if $p \neq q$ by \cite[Corollary 12.7]{Danilov}. 
\end{Ex}

\begin{Ex} \label{example Mumford}
Let $K$ be algebraically closed, $X$ be a smooth projective curve of genus $g$ and $\varphi \colon X \to Y$ be 
a closed embedding of $X$ into a toric variety such that $\Trop_{\varphi}(X)$ is a smooth tropical variety
(this exists if and only if $X$ is a Mumford curve by \cite{JellSmooth}). 
Then 
\begin{align*}
\HH^{p,q}_{\trop}(\Trop_{\varphi}(X), R) \to \HH^{p,q}_{\trop}(\Xan, R)
\end{align*}
is an isomorphism. 
In particular we have $\HH^{0,0}_{\trop}(\Xan, R) \cong \HH^{1,1}_{\trop}(\Xan, R) \cong R$ and 
$\HH^{1,0}_{\trop}(\Xan, R) \cong \HH^{0,1}_{\trop}(\Xan, R) \cong R^g$. 
\end{Ex}
\begin{proof}
Assume that $\Trop_{\varphi}(X)$ is smooth. 
Then $\trop_{\varphi}$ is a homeomorphism from a skeleton of 
$\Xan$ onto $\Trop_{\varphi}(X)$ \cite[Theorem 5.7]{Jell}.
Using comparison with singular cohomology we obtain $\HH^{0,0}_{\trop}(\Trop_{\varphi}(X), R) = R$
and $\HH^{0,1}_{\trop}(\Trop_{\varphi}(X), R) = R^g$. 
Using duality with coefficients in $R$ as proven in \cite[Theorem 5.3]{JRS} and comparison 
with singular homology,
we also obtain $\HH^{1,1}_{\trop}(\Xan, R) = R$ and $\HH^{1,0}_{\trop}(\Trop_{\varphi}(X), R) = R^g$. 
One immediately verifies all transition maps induced 
by refinements in the family $\tor_{\Smooth}$ defined in Example \ref{ex smooth} 
are isomorphisms and hence the claim follows from Theorem \ref{thm cohomology limit}. 
\end{proof}

\section{Open questions} \label{sect open questions}

In this section, we let $X$ be a variety over $K$. 

When $X$ is smooth, Liu constructed cycles class maps, meaning maps 
$\cyc_k \colon \CH(X)^k \to \HH^{k,k}(\Xan)$ 
that are compatible with the product structure on both sides and have
the expected integration property \cite{Liu}. 

\begin{question}
What is the image of $\cyc_k$?
\end{question}

In light of the tropical Hodge conjecture and Corollary \ref{kor wave mondromy analytic}, 
one might conjecture 
that the image of $\CH(X)_\Q$ is $\HH^{k,k}(\Xan, \Q) \cap  \ker(M)$. 
One might start with the case $k = \dim X - 1$. 
Here one knows the answer tropically \cite{JRS}, 
but the non-archimedean analogue is not a direct consequence. 

The following question was asked by a referee and 
the author thinks it is worthwhile to include it 
here along with a partial answer. 

\begin{question}
Is there an analogue of Theorem \ref{thm Pic0} when $X$ has semistable reduction?
\end{question}

Let $X$ be a smooth projective curve, let $\Xcal_s$ be the special fiber of a strictly semistable model of $X$ and 
let $C_1,\dots,C_n$ be the irreducible components of $\Xcal_s$. 
Then we can construct a map $\Pic^0(\Xcal_s) \to \HH^{1,1}(\Xan, \Z)$ 
by the composition
\begin{align*}
\Pic^0(\Xcal_s) \to \bigoplus_{i = 1}^n \Pic^0(C_i) \to \HH^{1,1}(\Xan, \log \vert \Ocal_X^\times \vert) 
\to \HH^{1,1}(\Xan, \Z). 
\end{align*}
Here the first map is pullback along normalization, and the second and third maps 
are induced by (\ref{eq Thuillier groups}) and (\ref{eq exp groups}), 
which remain valid when replacing $\Pic^0(\tilde X)$ with $\bigoplus_{i = 1}^n \Pic^0(C_i)$. 
It is well known that the first map need not be injective, hence the composition need
not be injective. 
Whether the map
\begin{align*}
\bigoplus_{i = 1}^n \Pic^0(C_i) \to \HH^{1,1}(\Xan, \log \vert \Ocal_X^\times \vert) 
\to \HH^{1,1}(\Xan, \Z)
\end{align*}
is injective is unclear to the author. 
The map  $\HH^{1,1}(\Xan, \log \vert \Ocal_X^\times \vert) \to \HH^{1,1}(\Xan, \Z)$
will not be injective when $\Xan$ is not contractible, 
since the map $\HH^{1,0}(\Xan, \Z) \to \HH^{0,1}(\Xan, \Gamma)$ from $(\ref{eq exp groups})$
will not be surjective. 
But that of course does not imply that the composition can not be injective.

\begin{question}
Does there exists a toric variety $Y$ and a closed embedding 
$\varphi \colon X \to Y$
such that 
\begin{align*}
\trop^* \colon \HH^{p,q}(\Trop_{\varphi}(X)) \to \HH^{p,q}(\Xan) \text{ and }
\trop^* \colon \HH^{p,q}_c(\Trop_{\varphi}(X)) \to \HH^{p,q}_c(\Xan)
\end{align*}
are isomorphisms?
\end{question}
The statement for $\HH^{p,q}_c(\Xan)$ is implied by the finite dimensionality of $\HH^{p,q}_c(\Xan)$
via Theorem \ref{thm cohomology limit}. 
It is in fact equivalent to the finite dimensionality of $\HH^{p,q}(\Xan)$ if one knew that 
$\HH^{p,q}_c(\Trop_{\varphi}(X))$ is always finite dimensional, though 
the author is not aware of such a result 
(without regularity assumptions on $\Trop_{\varphi}(X)$).

Other questions related to this concern smoothness of the tropical variety. 

\begin{question}
Let $\varphi \colon X \to Y$ be a closed embedding of $X$ into a toric variety $Y$ 
such that $\Trop_{\varphi}(X)$ is smooth. 
Are then
\begin{align*}
\trop^* \colon \HH^{p,q}(\Trop_{\varphi}(X)) \to \HH^{p,q}(\Xan) \text{ and } 
\trop^* \colon \HH^{p,q}_c(\Trop_{\varphi}(X)) \to \HH^{p,q}_c(\Xan)
\end{align*}
isomorphisms? 
\end{question}

This is certainly a natural question and ``optimistically expected'' to be true by Shaw \cite[p.3]{ShawSimons}.
We now know it holds for curves, as we showed in Example \ref{example Mumford}, but even the case $X = Y$ is 
open in dimension $\geq 2$.

\begin{question}
Let $\varphi \colon X \to Y$ be a closed embedding of $X$ into a toric variety $Y$ 
such that $\Trop_{\varphi}(X)$ is smooth. 
Does the diagram
\begin{align*}
\begin{xy}
\xymatrix{
Z^k(X)\ar[d]_{\trop}  \ar[rr] && \CH(X) \ar[rr]^{\cyc_k}  && \HH^{k,k}(\Xan)  \\
Z^k(\Trop(\varphi(X)) \ar[rr]^{Z \mapsto \int_{Z}} && 
\HH^{n-k,n-k}(\Trop_{\varphi}(X))^* \ar[rr]^{\PD^{-1}} && \HH^{k,k}(\Trop_{\varphi}(X)) \ar[u]_{\trop^*}
}
\end{xy} 
\end{align*}
commute? Here $\PD$ denote the Poincar\'e duality isomorphism on tropical varieties \cite{JSS}
and $\cyc_k$ denotes Liu's cycles class map \cite{Liu}. 
\end{question}

Let us finish with the remark that the author does not 
know of any variety $X$ with $\dim(X) \geq 2$ and 
any $0 < p \leq \dim(X)$ and $0 < q \leq \dim(X)$ with $(p,q) \neq (1,1)$ 
where we know $\dim_\R \HH^{p,q}(\Xan)$. 
(No, not even $\HH^{2,2}(\mathbb{P}^{2, \an})$ or $\HH^{2,2}(\A^{2, \an})$.)

\bibliographystyle{alpha}

\begin{thebibliography}{IKMZ19}

\bibitem[Ber90]{BerkovichSpectral}
Vladimir~G. Berkovich.
\newblock {\em Spectral theory and analytic geometry over non-{A}rchimedean
  fields}, volume~33 of {\em Mathematical Surveys and Monographs}.
\newblock American Mathematical Society, Providence, RI, 1990.

\bibitem[BPR16]{BPR}
Matthew Baker, Sam Payne, and Joseph Rabinoff.
\newblock Nonarchimedean geometry, tropicalization, and metrics on curves.
\newblock {\em Algebr. Geom.}, 3(1):63--105, 2016.

\bibitem[CLD12]{CLD}
Antoine Chambert-Loir and Antoine Ducros.
\newblock Formes diff\'{e}rentielles r\'{e}elles et courants sur les espaces de
  {B}erkovich.
\newblock 2012.
\newblock \url{http://arxiv.org/abs/1204.6277}.

\bibitem[CLS11]{CLS}
David~A. Cox, John~B. Little, and Henry~K. Schenck.
\newblock {\em Toric varieties}, volume 124 of {\em Graduate Studies in
  Mathematics}.
\newblock American Mathematical Society, Providence, RI, 2011.

\bibitem[Dan78]{Danilov}
V.~I. Danilov.
\newblock The geometry of toric varieties.
\newblock {\em Uspekhi Mat. Nauk}, 33(2(200)):85--134, 247, 1978.

\bibitem[FGP14]{FGP}
Tyler Foster, Philipp Gross, and Sam Payne.
\newblock Limits of tropicalizations.
\newblock {\em Israel J. Math.}, 201(2):835--846, 2014.

\bibitem[Ful93]{Fulton}
William Fulton.
\newblock {\em Introduction to toric varieties}, volume 131 of {\em Annals of
  Mathematics Studies}.
\newblock Princeton University Press, Princeton, NJ, 1993.
\newblock The William H. Roever Lectures in Geometry.

\bibitem[Gub13]{Gubler2}
Walter Gubler.
\newblock A guide to tropicalizations.
\newblock In {\em Algebraic and combinatorial aspects of tropical geometry},
  volume 589 of {\em Contemp. Math.}, pages 125--189. Amer. Math. Soc.,
  Providence, RI, 2013.

\bibitem[Gub16]{Gubler}
Walter Gubler.
\newblock Forms and currents on the analytification of an algebraic variety
  (after {C}hambert-{L}oir and {D}ucros).
\newblock In Matthew Baker and Sam Payne, editors, {\em Nonarchimedean and
  Tropical Geometry}, Simons Symposia, pages 1--30, Switzerland, 2016.
  Springer.

\bibitem[IKMZ19]{IKMZ}
Ilia Itenberg, Ludmil Katzarkov, Grigory Mikhalkin, and Ilia Zharkov.
\newblock Tropical homology.
\newblock {\em Math. Ann.}, 374(1-2):963--1006, 2019.

\bibitem[Jel16a]{JellThesis}
Philipp Jell.
\newblock Differential forms on {B}erkovich analytic spaces and their
  cohomology.
\newblock 2016.
\newblock PhD Thesis, availible at
  \url{http://epub.uni-regensburg.de/34788/1/ThesisJell.pdf}.

\bibitem[Jel16b]{Jell}
Philipp Jell.
\newblock A {P}oincar\'e lemma for real-valued differential forms on
  {B}erkovich spaces.
\newblock {\em Math. Z.}, 282(3-4):1149--1167, 2016.

\bibitem[Jel18]{JellSmooth}
Philipp Jell.
\newblock Constructing smooth and fully faithful tropicalizations for {M}umford
  curves.
\newblock 2018.
\newblock \url{https://arxiv.org/abs/1805.11594}.

\bibitem[Jel19]{JellDuality}
Philipp Jell.
\newblock Tropical {H}odge numbers of non-archimedean curves.
\newblock {\em Israel J. Math.}, 229(1):287--305, 2019.

\bibitem[JRS18]{JRS}
Philipp Jell, Johannes Rau, and Kristin Shaw.
\newblock Lefschetz (1,1)-theorem in tropical geometry.
\newblock {\em \'{E}pijournal Geom. Alg\'{e}brique}, 2:Art. 11, 2018.

\bibitem[JSS19]{JSS}
Philipp Jell, Kristin Shaw, and Jascha Smacka.
\newblock Superforms, tropical cohomology, and {P}oincar\'{e} duality.
\newblock {\em Adv. Geom.}, 19(1):101--130, 2019.

\bibitem[JW18]{JellWanner}
Philipp Jell and Veronika Wanner.
\newblock Poincar\'e duality for the tropical {D}olbeault cohomology of
  non-archimedean {M}umford curves.
\newblock {\em J. Number Theory}, 187:344--371, 2018.

\bibitem[Kaj08]{Kajiwara}
Takeshi Kajiwara.
\newblock Tropical toric geometry.
\newblock In {\em Toric topology}, volume 460 of {\em Contemp. Math.}, pages
  197--207. Amer. Math. Soc., Providence, RI, 2008.

\bibitem[KS94]{KS}
Masaki Kashiwara and Pierre Schapira.
\newblock {\em Sheaves on manifolds}, volume 292 of {\em Grundlehren der
  Mathematischen Wissenschaften [Fundamental Principles of Mathematical
  Sciences]}.
\newblock Springer-Verlag, Berlin, 1994.
\newblock With a chapter in French by Christian Houzel, Corrected reprint of
  the 1990 original.

\bibitem[KSW17]{KSW}
Lars Kastner, Kristin Shaw, and Anna-Lena Winz.
\newblock Cellular sheaf cohomology of {\it polymake}.
\newblock In {\em Combinatorial algebraic geometry}, volume~80 of {\em Fields
  Inst. Commun.}, pages 369--385. Fields Inst. Res. Math. Sci., Toronto, ON,
  2017.

\bibitem[Liu17]{Liu}
Yifeng Liu.
\newblock Tropical cycle classes for non-archimedean spaces and weight
  decomposition of de {R}ham cohomology sheaves.
\newblock 2017.
\newblock \url{https://users.math.yale.edu/~yl2269/deRham.pdf}, to appear in
  \emph{Ann.~Sci.~\'Ec.~Norm.~Sup\'er.}

\bibitem[Liu19]{Liu2}
Yifeng Liu.
\newblock Monodromy map for tropical {D}olbeault cohomology.
\newblock {\em Algebr. Geom.}, 6(4):384--409, 2019.

\bibitem[MR]{MikRau}
Grigory Mikhalkin and Johannes Rau.
\newblock Tropical geometry.
\newblock Draft of a book available at:
  https://www.dropbox.com/s/9lpv86oz5f4za75/main.pdf.

\bibitem[MZ08]{MikZharII}
Grigory Mikhalkin and Ilia Zharkov.
\newblock Tropical curves, their {J}acobians and theta functions.
\newblock In {\em Curves and abelian varieties}, volume 465 of {\em Contemp.
  Math.}, pages 203--230. Amer. Math. Soc., Providence, RI, 2008.

\bibitem[MZ14]{MikZhar}
Grigory Mikhalkin and Ilia Zharkov.
\newblock Tropical eigenwave and intermediate {J}acobians.
\newblock In {\em Homological mirror symmetry and tropical geometry}, volume~15
  of {\em Lect. Notes Unione Mat. Ital.}, pages 309--349. Springer, Cham, 2014.

\bibitem[Pay09]{Payne}
Sam Payne.
\newblock Analytification is the limit of all tropicalizations.
\newblock {\em Math. Res. Lett.}, 16(3):543--556, 2009.

\bibitem[Sha17]{ShawSimons}
Kristin Shaw.
\newblock Superforms and tropical cohomology.
\newblock 2017.

\bibitem[ST08]{StTe}
Bernd Sturmfels and Jenia Tevelev.
\newblock Elimination theory for tropical varieties.
\newblock {\em Math. Res. Lett.}, 15(3):543--562, 2008.

\bibitem[Thu05]{Thuillier}
Amaury Thuillier.
\newblock Th\'eorie du potentiel sur les courbes en g\'eom\'etrie analytique
  non archim\'edienne. {A}pplications \`a la th\'eorie d'{A}rakelov.
\newblock 2005.
\newblock
  \href{https://tel.archives-ouvertes.fr/file/index/docid/48750/filename/tel-00010990.pdf}.

\bibitem[W{\l}o93]{Wlod}
Jaros{\l}aw W{\l}odarczyk.
\newblock Embeddings in toric varieties and prevarieties.
\newblock {\em J. Algebraic Geom.}, 2(4):705--726, 1993.

\end{thebibliography}
\def\cprime{$'$}

\end{document}